\theoremstyle{thmstyleone}%
\newtheorem{theorem}{Theorem}[section]
\newtheorem{proposition}{Proposition}[section]%
\newtheorem{lemma}{Lemma}[section]%
\newtheorem{assumption}{Assumption}[section]%
\newtheorem{corollary}{Corollary}[section]%
\theoremstyle{thmstyletwo}%
\newtheorem{remark}{Remark}%
\theoremstyle{thmstylethree}%
\newtheorem{definition}{Definition}[section]%
\numberwithin{equation}{section}
\begin{document}

\title[Proximal Quasi-Newton Method for Manifold Optimization]{Proximal Quasi-Newton Method for Composite Optimization over the Stiefel Manifold}


\author[1]{\fnm{Qinsi} \sur{Wang}}\email{qinsiwang20@fudan.edu.cn}

\author*[1]{\fnm{Wei Hong} \sur{Yang}}\email{whyang@fudan.edu.cn}
%

\affil*[1]{\orgdiv{School of Mathematical Sciences}, \orgname{Fudan University}, \orgaddress{\street{220 Handan Street}, \city{Shanghai}, \postcode{200433},\country{China}}}
%
%


\abstract{In this paper, we consider the composite optimization problems over the Stiefel manifold. 
A successful method to solve this class of problems is the proximal gradient method proposed by [Chen et al., SIAM J. Optim., 30 (2020), pp. 210–239]. 
Motivated by the proximal Newton-type techniques in the Euclidean space, we present a Riemannian proximal quasi-Newton method, named ManPQN, to solve the composite optimization problems. 
The global convergence of the ManPQN method is proved and iteration complexity for obtaining an $\epsilon$-stationary point is analyzed. 
Under some mild conditions, we also establish the local linear convergence result of the ManPQN method. 
Numerical results are encouraging, which shows that the proximal quasi-Newton technique can be used to accelerate the proximal gradient method.
}

\keywords{Proximal Newton-type method. Stiefel manifold. Quasi-Newton method. Nonmonotone line search. Linear convergence}

\pacs[Mathematics Subject Classification]{90C30}



\maketitle

\section{Introduction}\label{sec1}

In this paper, we consider the following composite optimization problem
\begin{eqnarray}
&\mathop{\min}\limits_{X}&F(X):=f(X)+h(X),\label{eq1_prob}\\
\nonumber &s.t. & X^\top X=I_r,
\end{eqnarray}
where $f:\mathbb{R}^{n\times r}\rightarrow\mathbb{R}$ is a smooth function and $h:\mathbb{R}^{n\times r}\rightarrow\mathbb{R}$ is a convex nonsmooth function.
The feasible set ${\rm St}(n,r):=\{X\in\mathbb{R}^{n\times r}:X^\top X=I_r\}$ is referred to as the Stiefel manifold. 

Problem \eqref{eq1_prob} has wide applications in many fields such as machine learning, signal processing and numerical linear algebra. 
For example, when $f(X)=- {\rm tr}(X^TA^TAX)$ and $h(X)=\|X\|_1$, \eqref{eq1_prob} is just the sparse PCA problem;
When $f(X)={\rm tr}(X^TMX)$ and $h(X)=\|X\|_{2,1}$, where $M$ is a given matrix and $\|X\|_{2,1}:=\sum_{i=1}^n\|X(i,:)\|_2$, 
\eqref{eq1_prob} becomes the UFS (unsupervised feature selection) problem,
which finds features that represent the distribution of the input data best, both to reduce the dimension of the data and to eliminate the noisy features. 
For more applications of composite optimization over the Stiefel manifold, we refer the readers to \cite{absil2008, absil2017, mashiqian2020}.

In the Euclidean setting, we can use the subgradient method to solve the composite optimization problems.
In \cite{shor1967}, Shor proved the convergence of subgradient method for nonsmooth convex optimization.
There are several works which extend subgradient methods from Euclidean space to Riemannian manifolds.
Ferreira and Oliveira \cite{subgradient1998} studied the convergence of subgradient method.
Grohs and Hosseini \cite{subgradient2015} proposed an $\varepsilon$-subgradient method and established the global converge result.
In \cite{subgradient_tr2015}, they proposed an algorithm which combines the subgradient method with the trust-region model.

Another interesting approach is to use the operator splitting method to separate the manifold constraint part and the nonsmooth function part.
In the Euclidean space, it is known that the operator splitting method needs more iterations to achieve the same accuracy as the proximal gradient method. 
Chen et al. \cite{mashiqian2020} showed similar results in numerical experiments for the Riemannian setting.

A popular and efficient method for composite optimization in the Euclidean space is the proximal gradient (PG) method \cite{nes2013}.
For a nonsmooth convex function $h$, the proximal mapping of $h$ is defined by
\begin{equation}\label{prox_map}
{\rm prox}_h(x)=\arg\min_{y}\left\{h(y)+\frac{1}{2}\|y-x\|_2^2\right\}.
\end{equation}
To minimize $f(x)+h(x)$, where $f$ is smooth, the PG method takes the step 
\[
x_{k+1}={\rm prox}_{th}(x_k-t\nabla f(x_k)),
\]
where $t>0$. It is well known that
\begin{equation}\label{eq1_prox}
x_{k+1}=\arg\min_{x} \left\{\langle\nabla f(x_k),x\rangle+\frac{1}{2t}\|x-x_k\|_2^2+h(x)\right\}.
\end{equation}
There are several techniques can be used to accelerate the PG method. 
Beck and Teboulle \cite{fista} use Nesterov's acceleration technique to propose an algorithm, named FISTA, 
which shows the improved rate $O(1/k^2)$.

Another acceleration technique is the proximal Newton-type method, which is proposed in \cite{pn1996fukushima,pn2014}. Based on \eqref{prox_map}, Lee et al. \cite{pn2014} define the scaled proximal mapping as
\begin{equation}\label{scale_prox_map}
{\rm prox}_h^B(x)=\arg\min_{y}\left\{h(y)+\frac{1}{2}(y-x)^TB(y-x)\right\}.
\end{equation}
where $B$ is a positive definite matrix.
The motivation of the proximal Newton-type method is to replace the term $\|x-x_k\|_2^2/(2t)$ in \eqref{eq1_prox} by $(x-x_k)^TB_k(x-x_k)/2$,
where $B_k$ is an approximate matrix of $\nabla^2f(x_k)$.
The proximal Newton-type method takes the step
\begin{eqnarray}
\nonumber x_{k+1}&=&{\rm prox}_h^{B_k}(x_k-{B_k}^{-1}\nabla f(x_k))\\
&=&\arg\min_{x} \left\{\langle\nabla f(x_k),x\rangle+\frac{1}{2}(x-x_k)^TB_k(x-x_k)+h(x)\right\}.  
\end{eqnarray}
Superlinear convergence rate of the proximal Newton-type method has been established in \cite{pn2014,pn2022}. In \cite{pn2021}, Nakayama proved the local linear rate of the inexact proximal quasi-Newton method.

Recently, many researchers have extended proximal-type methods to Riemannian manifolds.
See \cite{ppaonmanifold2002,bento2016,boumal2016,yyx2019,ppa2021, Zhang2021ARS,huang2021riemannian,huang2022}. 
In \cite{yyx2019}, Gao et al. 
propose a parallelized proximal linearized augmented Lagrangian algorithm for solving optimization problems over the Stiefel manifold and show its convergence property.
Oviedo \cite{ppa2021} designs an iterative proximal point algorithm to minimize a continuously differentiable function over the Stiefel manifold, which avoids to construct geodesics and can be proved to converge globally. 
Zhang et al. \cite{Zhang2021ARS} extend the smoothing steepest descent method for nonconvex and non-Lipschitz optimization from Euclidean space to Riemannian manifolds. The method can be applied to solve composite optimization problems.
In \cite{mashiqian2020}, Chen et al. propose a retraction-based proximal gradient method, named ManPG, for composite optimization over the Stiefel manifold. 
Numerical experiments in \cite{mashiqian2020} show that the performance of ManPG outperforms subgradient-based methods and ADMM-type methods for sparse PCA and CM problems.
At the $k$-th iteration, the ManPG method solves the following subproblem to get a descent direction at $X_k$:
\begin{equation}\label{eq1_subprob}
V_k=\mathop{\arg\min}\limits_{V\in{\rm T}_{X_k}\mathcal{M}}\left\{\langle{\rm grad}f(X_k),V\rangle+\frac{1}{2t}\|V\|^2+h(X_k+V)\right\}.
\end{equation}
To accelerate the proximal gradient method, we propose a Riemannian proximal quasi-Newton method (ManPQN) to solve \eqref{eq1_prob},
which replaces $\frac{1}{2t}\|V\|^2$ in \eqref{eq1_subprob} by $\frac{1}{2}\|V\|_{\mathcal{B}_k}^2$, where $\mathcal{B}_k$ is a linear operator on ${\rm T}_{X_k}\mathcal{M}$ updated by a quasi-Newton strategy.
The ManPQN method can be regarded as an extension of the proximal quasi-Newton method from Euclidean space to Riemannian manifolds.


Our main contributions can be summarized as follows. 
We propose a proximal quasi-Newton method named ManPQN to solve \eqref{eq1_prob}.
Under some mild conditions, we establish the global convergence and local linear convergence results of ManPQN.
Since the computational cost of updating $\mathcal{B}_k$ by the quasi-Newton strategy is very large, 
we use a strategy for updating $\mathcal{B}_k$, which can reduce the amount of computation significantly.
Numerical experiments demonstrate that the proximal quasi-Newton technique can be used to accelerate the ManPG method.

The organization of the paper is shown as follows. In section \ref{sec2}, 
we introduce some notations and definitions that will be frequently used throughout the paper.
In section \ref{sec3}, we propose the ManPQN algorithm in detail. 
The global convergence of ManPQN is proved and iteration complexity for obtaining an $\epsilon$-stationary point is analyzed in section \ref{sec4}.
Under some mild conditions, we also establish the local linear convergence result of the ManPQN method in this section. 
In section \ref{sec_num_exp}, we compare the ManPQN method with ManPG related methods in the numerical experiments.
The paper ends with some conclusions and a short discussion on possible future works.

\section{Notations and Preliminaries}\label{sec2}

In this section, we introduce some notations and definitions which will
be used in the rest of the paper.
Throughout this section, $\mathcal{M}$ denotes a general manifold.

\begin{definition}(Tangent space \cite[p.33]{absil2008})\label{def1}
Let $X\in\mathcal{M}$.
A tangent vector $\xi_X$ to a manifold $\mathcal{M}$ at $X$ is a mapping from $\mathfrak{F}_X(\mathcal{M})$ to $\mathbb{R}$ such that there exists a smooth curve $\gamma$ on $\mathcal{M}$ with $\gamma(0)=X$, satisfying
\[
\xi_X f=\dot{\gamma}(0)f:=\left.\frac{{\rm d}(f(\gamma(t)))}{{\rm d}t}\right\vert_{t=0},
\]
for all $f\in\mathfrak{F}_X(\mathcal{M})$, where $\mathfrak{F}_X(\mathcal{M})$ denotes the set of smooth real-valued functions defined on a neighborhood of $X$ over the manifold $\mathcal{M}$.
The tangent space to $\mathcal{M}$ at $X$, denoted by ${\rm T}_X\mathcal{M}$, is the set of all tangent vectors to $\mathcal{M}$ at $X$.
The tangent bundle ${\rm T}\mathcal{M}:=\cup_X{\rm T}_X\mathcal{M}$ consists of all tangent vectors to $\mathcal{M}$.
\end{definition}

The Riemannian manifold $\mathcal{M}$ is a manifold whose tangent spaces are endowed with a smoothly varying inner product $\langle\xi,\zeta\rangle_X$,
where $\xi,\zeta\in {\rm T}_X\mathcal{M}$. The norm induced by the Riemannian inner product is denoted by $\|\cdot\|_X$, where $\|\xi\|_X:=(\langle\xi,\xi\rangle_X)^{1/2}$. In this work, for the Stiefel manifold ${\rm St}(n,r)$, we consider the Riemannian metric defined by
$ \langle\xi,\zeta\rangle_X:= tr(\xi^\top \zeta)$. For simplicity of notation, 
we write $\|\cdot\|$ instead of $\|\cdot\|_X$ in the rest of the paper.

The tangent space of the Stiefel manifold ${\rm St}(n,r)$ at $X$ is \cite[p.42]{absil2008}
 \[{\rm T}_X{\rm St}(n,r)=\{V \mid V^\top X+X^\top V=0\}. \]
The projection operator onto ${\rm T}_X{\rm St}(n,r)$ is given by \cite[p.48]{absil2008}
\begin{equation}\label{p}
{\rm Proj}_{{\rm T}_X{\rm St}(n,r)} Z = Z-\frac{1}{2}X(X^\top Z+Z^\top X).
\end{equation}

For a differentiable function $f :\mathcal{M}\rightarrow\mathbb{R}$,
the derivative of $f$ at $X\in \mathcal{M}$, denoted by $\textrm{D}f(X)$,
is an element of the dual space to $T_{X}\mathcal{M}$ which satisfies $\textrm{D}f(X)\xi=\xi f$ for all $\xi\in T_{X}\mathcal{M}$.
The gradient of a smooth function $f$ at $X$, denoted by ${\rm grad} f(X)$ \cite[p.46]{absil2008}, is defined as the unique element of ${\rm T}_X\mathcal{M}$ that satisfies
\begin{equation*}
\langle {\rm grad} f(X),\xi\rangle = {\rm D} f(X)[\xi]\quad\forall\xi\in{\rm T}_X\mathcal{M}.
\end{equation*}
If $\mathcal{M}$ is an embedded submanifold of an Euclidean space $E$, then by \cite[p.48]{absil2008},
\begin{equation}\label{vv}
{\rm grad} f(X) = {\rm Proj}_{{\rm T}_X\mathcal{M}} \nabla f(X),
\end{equation}
where ${\rm Proj}_{{\rm T}_X\mathcal{M}}$ is the projection operator from $E$ onto ${\rm T}_X\mathcal{M}$ and $\nabla f(X)$ is the Euclidean gradient of $f$ at $X$.
For any $\xi\in{\rm T}_X\mathcal{M}$, from \eqref{vv}, it follows that
\begin{equation*}
\langle{\rm grad} f(X),\xi\rangle=\langle\nabla f(X),\xi\rangle.
\end{equation*}

Next we introduce the definitions of the retraction and the vector transport.

\begin{definition}(Retraction \cite[Definition 4.1.1]{absil2008})\label{retraction}
A retraction on a manifold $\mathcal{M}$ is a smooth mapping ${\bf R}$ from the tangent bundle ${\rm T}\mathcal{M}$ onto $\mathcal{M}$ with the following properties. Let ${\bf R}_X$ denote the restriction of ${\bf R}$ to ${\rm T}_X\mathcal{M}$.
\begin{enumerate} 
\item[(1)] ${\bf R}_X(0_X)=X$, where $0_X$ denotes the zero element of ${\rm T}_X\mathcal{M}$.

\item[(2)] With the canonical identification ${\rm T}_{0_X}({\rm T}_X\mathcal{M})\simeq{\rm T}_X\mathcal{M}$, ${\bf R}_X$ satisfies 
\begin{eqnarray}\label{r}
{\rm D}{\bf R}_X(0_X)={\bf id}_{{\rm T}_X\mathcal{M}},
\end{eqnarray}
where ${\rm D}{\bf R}_X(0_X)$ denotes the differential of the retraction ${\bf R}_X$ at the zero element $0_X\in{\rm T}_X\mathcal{M}$ and ${\bf id}_{{\rm T}_X\mathcal{M}}$ denotes the identity mapping on ${\rm T}_X\mathcal{M}.$
\end{enumerate}
\end{definition}

The following properties for a retraction ${\bf R}$ will be used in section \ref{sec4}.
For a proof, see \cite{2order_boudness2016}.

\begin{proposition}\label{prop2_1} (\cite{2order_boudness2016})
Suppose $\mathcal{M}$ is a compact embedded submanifold of an Euclidean space $E$, 
and ${\bf R}$ is a retraction.
Then there exists $M_1, M_2>0$ such that for all $X\in\mathcal{M}$ and for all $\xi\in{\rm T}_X\mathcal{M}$,
\begin{eqnarray}
\|{\bf R}_X(\xi)-X\| &\leq& M_1\|\xi\| ,\label{eq5}\\
\|{\bf R}_X(\xi)-X-\xi\| &\leq& M_2\|\xi\|^2.\label{eq6}
\end{eqnarray}
\end{proposition}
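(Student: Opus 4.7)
The plan is to establish both inequalities by splitting into two regimes: small $\|\xi\|$, handled by a uniform Taylor expansion, and large $\|\xi\|$, handled by the boundedness that follows from compactness of $\mathcal{M}$.

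I would first target the second-order bound \eqref{eq6}, since \eqref{eq5} then follows by triangle inequality. Fix any $X_0\in\mathcal{M}$. The retraction ${\bf R}_{X_0}:{\rm T}_{X_0}\mathcal{M}\to\mathcal{M}$ is smooth and, by Definition \ref{retraction}, satisfies ${\bf R}_{X_0}(0_{X_0})=X_0$ and ${\rm D}{\bf R}_{X_0}(0_{X_0})={\bf id}_{{\rm T}_{X_0}\mathcal{M}}$. Applying Taylor's theorem with integral remainder to $\xi\mapsto{\bf R}_{X_0}(\xi)-X_0-\xi$ (viewed in the ambient Euclidean space $E$), the zeroth- and first-order terms vanish, yielding a local estimate $\|{\bf R}_{X_0}(\xi)-X_0-\xi\|\le C(X_0)\|\xi\|^2$ for $\|\xi\|$ in a neighborhood of $0$.

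To globalize this to a single pair of constants, I would invoke compactness of $\mathcal{M}$ together with smoothness of ${\bf R}$ in both arguments on the tangent bundle. Concretely, on the compact set $\{(X,\xi)\in{\rm T}\mathcal{M}:\|\xi\|\le 1\}$ the second-order derivative of ${\bf R}_X(\xi)$ with respect to $\xi$ is continuous and hence bounded, which upgrades the pointwise Taylor bound above to a uniform one: there exist $\delta>0$ and $C>0$ with $\|{\bf R}_X(\xi)-X-\xi\|\le C\|\xi\|^2$ for all $X\in\mathcal{M}$ and all $\|\xi\|\le\delta$. For the large-$\xi$ regime, compactness of $\mathcal{M}$ supplies a diameter bound $\|{\bf R}_X(\xi)-X\|\le D$, so for $\|\xi\|\ge\delta$ one has $\|{\bf R}_X(\xi)-X-\xi\|\le D+\|\xi\|\le(D/\delta^2+1/\delta)\|\xi\|^2$. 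Setting $M_2:=\max\{C,\,D/\delta^2+1/\delta\}$ completes \eqref{eq6}. Then \eqref{eq5} follows by writing $\|{\bf R}_X(\xi)-X\|\le\|\xi\|+\|{\bf R}_X(\xi)-X-\xi\|\le\|\xi\|+M_2\|\xi\|^2$, which is bounded by $(1+M_2\delta)\|\xi\|$ when $\|\xi\|\le\delta$ and by $D\le(D/\delta)\|\xi\|$ when $\|\xi\|>\delta$, so $M_1:=\max\{1+M_2\delta,\,D/\delta\}$ works.

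The main obstacle is the uniformization step: turning the pointwise Taylor constant $C(X_0)$ into a constant $C$ that is independent of the base point $X$. This requires arguing that the relevant second-derivative norm of ${\bf R}$ depends continuously on $(X,\xi)$ and hence attains a maximum on the compact set of pairs with $\|\xi\|$ bounded. One clean way is to choose a finite atlas of local trivializations of ${\rm T}\mathcal{M}$ covering $\mathcal{M}$, bound the second-order term in each chart by continuity, and take a maximum over the finite cover. Once this continuity is in hand, the remaining steps are bookkeeping with the diameter of $\mathcal{M}$.
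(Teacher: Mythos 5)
Your argument is correct, and it is essentially the standard proof: the paper itself gives no proof of Proposition \ref{prop2_1}, deferring entirely to the cited reference, and that reference establishes the bounds by exactly your route --- a uniform second-order Taylor expansion of $\xi\mapsto{\bf R}_X(\xi)-X-\xi$ on a compact disk bundle, combined with the diameter bound from compactness to handle large $\|\xi\|$. Your derivation of \eqref{eq5} from \eqref{eq6} and the splitting at $\|\xi\|=\delta$ are both sound, and your use of the large-$\xi$ regime is legitimate here because the paper's Definition \ref{retraction} takes ${\bf R}$ to be defined on all of ${\rm T}\mathcal{M}$.
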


Next we will consider the transport of a vector from one tangent space ${\rm T}_X\mathcal{M}$ into another one ${\rm T}_Y\mathcal{M}$.

\begin{definition} (Vector Transport \cite[Definition 8.1.1]{absil2008})
A vector transport associated with a retraction ${\bf R}$ is defined as a continuous function $\mathcal{T}:{\rm T}\mathcal{M}\times {\rm T}\mathcal{M}\rightarrow {\rm T}\mathcal{M}, (\eta_X,\xi_X)\mapsto{\rm T}_{\eta_X}(\xi_X)$, which satisfies the following conditions:

(i)~$\mathcal{T}_{\eta_X}:{\rm T}_X\mathcal{M}\rightarrow {\rm T}_{{\bf R}_X(\eta_X)}\mathcal{M}$ is a linear invertible map,

(ii)~$\mathcal{T}_{0_X}(\xi_X)=\xi_X$.
\end{definition}

Let $Y:={\bf R}_X(\eta_X)$, where $\eta_X\in{\rm T}_{X}\mathcal{M}$. For ease of notation, we denote
\begin{eqnarray*}\label{iso}
\mathcal{T}_{X,Y}(\xi_X):=\mathcal{T}_{\eta_X}(\xi_X),
\end{eqnarray*}
where $\xi_X\in {\rm T}_{X}\mathcal{M}$.

For any $h\in{\rm T}_{X}\mathcal{M}$, we use $h^\flat$ to denote the linear function on $T_{X}\mathcal{M}$ induced by
\[
h^{\flat}\eta:=\langle h,\eta\rangle={\rm tr}(h^T\eta)\quad\forall\eta\in{\rm T}_{X}\mathcal{M}.
\]
Let $H=[h_1,\dots,h_m]$, where $h_i\in T_{X}\mathcal{M}$ for any $i=1,\dots,m$.
Define $H^{\flat}:{\rm T}_{X}\mathcal{M}\to\mathbb{R}^m$ by
\[
H^{\flat}\eta=[h_1^{\flat}\eta,\dots,h_m^{\flat}\eta]^T\quad\forall\eta\in{\rm T}_{X}\mathcal{M}.
\]

Finally, we introduce the definitions of generalized Calrke subdifferential and regular function.

\begin{definition}[Generalized Calrke subdifferential \cite{mashiqian2020,ywh2013}]
For a locally Lipschitz function $\psi$ on a Riemannian manifold $\mathcal{M}$,
the Riemannian generalized directional derivative of $\psi$ at $X\in\mathcal{M}$ in the direction $V\in{\rm T}_X\mathcal{M}$ is defined by
\begin{equation*}
\psi^{\circ}(X;V)=\mathop{\lim\sup}\limits_{Y\to X,t\downarrow0}\frac{\psi\circ\phi^{-1}(\phi(Y)+tD\phi(X)[V])-\psi\circ\phi^{-1}(\phi(Y))}{t},
\end{equation*}
where $(\phi,U)$ is a coordinate chart at $X$. The generalized Clarke subdifferential of $\psi$ at $X\in\mathcal{M}$,
denoted by $\hat{\partial}\psi(X)$, is a subset of ${\rm T}_X\mathcal{M}$ whose support function is $\psi^{\circ}(X;\cdot)$, defined by
\begin{equation*}
\hat{\partial}\psi(X)=\{\xi\in{\rm T}_X\mathcal{M}: \langle\xi,V\rangle\leq \psi^{\circ}(X;V), \forall~V\in{\rm T}_X\mathcal{M} \}.
\end{equation*}
\end{definition}

In the rest of this section, we assume that $\mathcal{M}$ be an embedded submanifold of an Euclidean space $E$.

\begin{definition}[Regular function \cite{ywh2013}]
Let $f$ be a function defined on $E$.
We say that $f$ is regular at $X\in\mathcal{M}$ along ${\rm T}_X\mathcal{M}$ if
\begin{enumerate}
\item[(1)] for all $V\in{\rm T}_X\mathcal{M}$, 
\[
f^{\prime}(X;V)=\lim_{t\downarrow 0} \frac{f(X+tV)-f(X)}{t},
\]
exists, and
\item[(2)] for all $V\in{\rm T}_X\mathcal{M}$,
$f^{\prime}(X;V)=f^{\circ}(X;V).$
\end{enumerate}
\end{definition}

For a regular nonsmooth function $\psi(X)$, 
it is proved in \cite[Theorem 5.1]{ywh2013} that $\hat{\partial}\psi(X)={\rm Proj}_{{\rm T}_X\mathcal{M}} (\partial \psi(X))$,
where $\partial \psi(X)$ is the Euclidean generalized Clarke subdifferential of $\psi$ at $X$.
Since $f$ is continuously differentiable and $h$ is convex,
by \cite[Lemma 5.1]{ywh2013}, the objective function $F=f+h$ is regular.
Then the first-order necessary condition of problem \eqref{eq1_prob} can be written as
\begin{equation}\label{eq2_opt}
0\in \hat{\partial}F(X^*)={\rm grad}f(X^*)+{\rm Proj}_{{\rm T}_{X^*}\mathcal{M}} (\partial h(X^*)),
\end{equation}
where $X^*$ is a local optimal solution of \eqref{eq1_prob}.

\section{Proximal Quasi-Newton Method}\label{sec3}

In this section, we propose a proximal quasi-Newton algorithm over the Stiefel manifold, named ManPQN,
which takes a proximal quasi-Newton step at each iteration, and uses the
nonmonotone line search strategy to determine the stepsize.
In the rest of the paper, we use $\mathcal{M}$ to denote the Stiefel manifold.

\subsection{The ManPQN Algorithm}\label{sec31}

At the current iterate $X_k$, the ManPQN method solves the following problem to get the descent direction $V_k$:
\begin{eqnarray}
V_k=&\mathop{\arg\min}
& \langle\nabla f(X_k),V\rangle+\frac{1}{2}\|V\|_{\mathcal{B}_k}^2+h(X_k+V)\label{eq3_subprob}\\
& {\rm s.t.}& \mathcal{A}_k(V):= V^TX_k+X_k^TV=0,\label{eq3_constraint}
\end{eqnarray}
where $\mathcal{B}_k$ is a symmetric positive definite operator on ${\rm T}_{X_k}\mathcal{M}$,
and $\|V\|_{\mathcal{B}_k}^2=\langle V, \mathcal{B}_k[V]\rangle$.
From \eqref{eq3_constraint}, we know that $V_k\in{\rm T}_{X_k}\mathcal{M}$.

To guarantee the positive definiteness of $\mathcal{B}_k$, we use a damped LBFGS strategy to generate $\mathcal{B}_k$.
We leave the details of the updating in the next subsection.

After $V_k$ is obtained, we apply the nonmonotone line search technique to determine the stepsize $\alpha_k$.
The nonmonotone technique was first introduced in \cite{gll1986}.
Specifically, $\alpha_k$ is set to be $\gamma^{N_k}$, where $N_k$ is the smallest integer such that
\begin{eqnarray}\label{nonmonotone}
F({\bf R}_{X_k}(\alpha_k V_k))\leq\mathop{\max}\limits_{\max\{0,k-m\}\leq j\leq k}F(X_j)-\frac{1}{2}\sigma\alpha_k\|V_k\|_{\mathcal{B}_k}^2,
\end{eqnarray}
in which $m>0$ is an integer, $\sigma,\gamma\in(0,1)$.
For simplicity, we use the notation
\begin{equation}\label{lk}
l(k):=\mathop{\arg\max}\limits_{\max\{k-m,0\}\leq j\leq k}F(X_{j}).
\end{equation}
Then $F(X_{l(k)})=\max\limits_{\max\{k-m,0\}\leq j\leq k}F(X_{j})$.

Now we summarize the ManPQN method as follows:

\begin{algorithm}
\caption{Proximal quasi-Newton algorithm with nonmonotone line search to solve problem \eqref{eq1_prob}}\label{algo1}
\begin{algorithmic}[1]
\Require Initial point $X_0\in\mathcal{M},$ $\gamma, \sigma\in(0,1),$ $m>0$ are integers.
\For{$k = 0,1,\dots$}
		\If{$k\geq1$}
		\State Update $\mathcal{B}_k$ by quasi-Newton strategy;
		\Else
		\State Set $\mathcal{B}_k=I$;
		\EndIf
		\State Solve the subproblem \eqref{eq3_subprob} to get the search direction $V_k$; 
		\State Set initial stepsize $\alpha_k=1$;
\While{\eqref{nonmonotone} is not satisfied}
\State $\alpha_k=\gamma\alpha_k$;
\EndWhile
\State Set $X_{k+1}={\bf R}_{X_k}(\alpha_k V_k)$;
\EndFor
\end{algorithmic}
\end{algorithm}

\subsection{Damped LBFGS Method}\label{sec32}

If the operator $\mathcal{B}_k$ in \eqref{eq3_subprob} is updated by the Riemannian BFGS method, the total amount of computation is very large.
It is well known that the Limited memory BFGS method (LBFGS) \cite{lbfgs1980} is more suitable for large scale problems.
Comparing with the BFGS method, the LBFGS method only needs the information of the last $p$ steps, where $p$ is the memory size.

To give the Riemannian LBFGS update formula, we introduce some notations first.
For $k\geq0$, let $g_k={\rm grad}f(X_k)\in{\rm T}_{X_k}\mathcal{M}$ and $\mathcal{T}_{k,k+1}:=\mathcal{T}_{X_{k},X_{k+1}}$.
Define
\begin{eqnarray*}
S_{k}&:=& \mathcal{T}_{k,k+1}({\bf R}_{X_{k}}^{-1}(X_{k+1})), \ Y_{k} := g_{k+1}-\mathcal{T}_{k,k+1}(g_{k}).
\end{eqnarray*}

Given an initial estimate $\mathcal{B}_{k,0}$ at iteration $k$ and two sequences $\{S_j\}$ and $\{Y_j\}$,
$j=k-p,\dots,k-1$, the Riemannian LBFGS method updates $\mathcal{B}_{k,i}$ recursively as
\begin{eqnarray}
\label{eq3_bk}
\mathcal{B}_{k,i}&=&\widetilde{\mathcal{B}}_{k,i-1} - \frac{\widetilde{\mathcal{B}}_{k,i-1}S_j(\widetilde{\mathcal{B}}_{k,i-1}S_j)^{\flat}}{S_j^{\flat}\widetilde{\mathcal{B}}_{k,i-1}S_j} + \rho_jY_jY_j^{\flat},\\
\nonumber j &=& k-(p-i+1),\ i=1,\dots,p,
\end{eqnarray}
where
\[
\widetilde{\mathcal{B}}_{k,i-1}=\mathcal{T}_{j,j+1}\circ\mathcal{B}_{k,i-1}\circ\mathcal{T}_{j,j+1}^{-1} \ {\rm and} \ \rho_j=\frac{1}{Y_j^{\flat}S_j}.
\]
The output $\mathcal{B}_{k,p}$ is set to be $\mathcal{B}_{k}$, the Riemannian LBFGS operator.
It is easy to see $\mathcal{B}_{k}$ is a linear operator on ${\rm T}_{X_{k}}\mathcal{M}$.

The inverse of $\mathcal{B}_{k,i}$ is denoted by $\mathcal{H}_{k,i}$, $i=0,1,\cdots,p$.
By the Sherman-Morrison-Woodbury
formula \cite{smw1950}, we have
\begin{eqnarray}
\nonumber\mathcal{H}_{k,i} &=& ({\bf id}-\rho_jS_jY_j^{\flat})\mathcal{T}_{j,j+1}\mathcal{H}_{k,i-1}\mathcal{T}_{j,j+1}^{-1}({\bf id}-\rho_jY_jS_j^{\flat})+\rho_jS_jS_j^{\flat},\\
\nonumber j &=& k-(p-i+1),\ i=1,\dots,p,
\end{eqnarray}
where ${\bf id}$ denotes the identity map.
It is obvious that $\mathcal{H}_{k}=\mathcal{H}_{k,p}$, where $\mathcal{H}_{k}$ is the inverse of $\mathcal{B}_{k}$.

From \eqref{eq3_bk}, we know that the update of $\mathcal{B}_k$ is computationally expensive because it involves the calculation of vector transports.
Since the Stiefel manifold $\mathcal{M}$ is a submanifold of a $\mathbb{R}^{n\times r}$,
we can calculate the Euclidean differences $s_k$ and $y_k$ to replace $S_k$ and $Y_k$, that is
\begin{eqnarray*}
s_k=X_{k+1}-X_k\in\mathbb{R}^{n\times r},~~\quad y_k=g_{k+1}-g_k\in\mathbb{R}^{n\times r}.
\end{eqnarray*}
Calculating $s_k$ and $y_k$ is much cheaper than calculating $S_k$ and $Y_k$. Such a strategy was used in \cite{wzw2010,wzw2018}.

To reduce the computational cost further,
we will use a simple and easily computed $\mathbf{B}_k$ to approximate $\mathcal{B}_k$.
Such a strategy was used in \cite{diag2020,diagonal2022} for the Euclidean setting.
In the ManPQN method, we solve the following subproblem to get $V_k$:
\begin{equation}\label{duoduo}
V_k=\mathop{\arg\min}\limits_{V\in{\rm T}_{X_k}\mathcal{M}}\left\{\langle\nabla f(X_k),V\rangle+\frac{1}{2}{\rm tr}(V^T\mathbf{B}_k[V])+h(X_k+V)\right\},
\end{equation}
where
\begin{eqnarray}\label{hanwu}
\mathbf{B}_{k}[V] ={\rm Proj}_{{\rm T}_{X_k}\mathcal{M}}\big(({\rm diag}B_k)({\rm Proj}_{{\rm T}_{X_k}\mathcal{M}}V)\big),
\end{eqnarray}
in which $B_k\in\mathbb{R}^{n\times n}$ is a symmetric matrix.
By \eqref{hanwu}, we can deduce that
\begin{eqnarray}\label{shasha}
{\rm tr}(V^T\mathbf{B}_k[V])={\rm tr}(V^T({\rm diag}B_k)V)\quad\forall V\in{\rm T}_{X_k}\mathcal{M}.
\end{eqnarray}
To ensure $V_k$ is a descent direction of $F$ at $X_k$,
a sufficient condition is that $B_k$ is a positive definite matrix.
To guarantee the positive definitness of $B_k$, we use the damped technique employed in \cite{damp2017,damp1978}.
Specifically, for $k\geq 0$, define
\begin{eqnarray}\label{jia}
\overline{y}_{k-1} = \beta_{k-1}{y}_{k-1} + (1-\beta_{k-1})H_{k,0}^{-1}{s}_{k-1},
\end{eqnarray}
where we set $H_{k,0}=(1/\delta)I$ for some $\delta>0$ and
\begin{eqnarray*}
\beta_{k-1}=
\left\{
\begin{array}{ll}
\frac{0.75{\rm tr}(s_{k-1}^TH_{k,0}^{-1}s_{k-1})}{{\rm tr}(s_{k-1}^TH_{k,0}^{-1}s_{k-1}) - {\rm tr}(s_{k-1}^Ty_{k-1})},
&\textrm{if $ {\rm tr}(s_{k-1}^Ty_{k-1}) <  0.25 {\rm tr}(s_{k-1}^TH_{k,0}^{-1}s_{k-1})$};\\
1,&\textrm{otherwise}.
\end{array}
\right.
\end{eqnarray*}
Then we can define $B_k$ as follows:
\begin{eqnarray}\label{eq3_eu_bk}
\left\{\begin{array}{l}
B_{k}=B_{k,p},\\
B_{k,i}=B_{k,i-1}-\frac{B_{k,i-1}s_{j}s_{j}^TB_{k,i-1}}{{\rm tr}(s_{j}^TB_{k,i-1}s_{j})}
+\frac{\overline{y}_{j}\overline{y}_{j}^T}{{\rm tr}(s_{j}^T\overline{y}_{j})},\\
j=k-(p-i+1),i=1,\dots,p,\\
B_{k,0}=\delta I_n.
\end{array}
\right.
\end{eqnarray}
Similar to the proof of \cite[Lemma 3.1]{damp2017},
we can show that $B_k$ are positive definite matrices for all $k$. 
We omit the detail.
The inverse of $B_{k,i}$ and $B_k$ are denoted by $H_{k,i}$ and $H_k$.
Thus, it holds that
\begin{eqnarray} \label{eq3_lbfgs_hk}
\left\{
\begin{array}{l}
H_k=H_{k,p},\\
H_{k,i} = (I - \overline{\rho}_js_j\overline{y}_j^{T})H_{k,i-1} (I - \overline{\rho}_j\overline{y}_js_j^{T})+\overline{\rho}_j s_js_j^{T},\\
j = k-(p-i+1),\ i=1,\dots,p,\\
H_{k,0}=(1/\delta)I_n,
\end{array}\right.
\end{eqnarray}
where $\overline{\rho}_j=1/{\rm tr}(s_{j}^T\overline{y}_{j})$.

The following lemma shows that $\mathbf{B}_{k}$ and its inverse are all bounded operators.
In the proof of the lemma, we will use the notation
\begin{eqnarray}\label{wang}
\varrho:=\sup_{X\in\mathcal{M}}\|\nabla f(X)\|.
\end{eqnarray}
Since $\mathcal{M}$ is a compact set in $\mathbb{R}^{n\times r}$,
it follows that $\varrho<\infty$.
We will also use the notations
\begin{eqnarray}\label{kappa}
\kappa_1=\left(\frac{\upsilon^{2p}-1}{\upsilon^2-1}\cdot \frac{4}{\delta}+ \frac{\upsilon^{2p}}{\delta}\right)^{-1},\qquad
\kappa_2=\delta+\frac{4(L+\varrho+\delta)^2}{\delta}p,
\end{eqnarray}
where $\upsilon:=(4L+4\varrho+5\delta)/\delta$, and $\delta$ is as in \eqref{eq3_eu_bk}.

\begin{lemma}\label{lm_bk}
Suppose that $\nabla f$ is Lipschitz continuous with the Lipschitz constant $L$.
Suppose $\mathbf{B}_{k}$ and $B_k$ are defined by \eqref{hanwu} and \eqref{eq3_eu_bk} respectively.
Denote $\|V\|_{\mathbf{B}_{k}}^2:={\rm tr}(V^T\mathbf{B}_k[V])$.
Then for all $k$, we have
\begin{eqnarray}\label{kan}
\kappa_1\|V\|^2
\leq\|V\|_{\mathbf{B}_{k}}^2\leq \kappa_2\|V\|^2\quad\forall V\in {\rm T}_{X_k}\mathcal{M}.
\end{eqnarray}
\end{lemma}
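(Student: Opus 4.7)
The plan is to reduce the lemma to a uniform spectral bound on the positive definite matrix $B_k\in\mathbb{R}^{n\times n}$ and then to obtain such a bound from the damped LBFGS recursions \eqref{eq3_eu_bk}--\eqref{eq3_lbfgs_hk}. First, by \eqref{shasha}, $\|V\|_{\mathbf{B}_k}^2 = {\rm tr}(V^T({\rm diag}\,B_k)V) = \sum_{i,a}(B_k)_{ii}V_{ia}^2$. Since $B_k$ is symmetric positive definite, each diagonal entry satisfies $\lambda_{\min}(B_k) \le (B_k)_{ii} = e_i^T B_k e_i \le \lambda_{\max}(B_k)$ by Rayleigh's principle. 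Consequently $\lambda_{\min}(B_k)\|V\|^2 \le \|V\|_{\mathbf{B}_k}^2 \le \lambda_{\max}(B_k)\|V\|^2$ for every $V\in\mathbb{R}^{n\times r}$, so it suffices to prove $\lambda_{\min}(B_k)\ge\kappa_1$ and $\lambda_{\max}(B_k)\le\kappa_2$.

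Two uniform curvature estimates drive both bounds: (i) $s_j^T\bar{y}_j \ge \tfrac{1}{4}\delta\|s_j\|^2$ and (ii) $\|\bar{y}_j\| \le (L+\varrho+\delta)\|s_j\|$. For (i), the case $\beta_j=1$ is immediate from the case split, while in the damped case substituting the formula for $\beta_{k-1}$ into \eqref{jia} collapses $s_j^T\bar{y}_j$ to exactly $\tfrac{1}{4}\delta\|s_j\|^2$ after simplification. For (ii), $\|\bar{y}_j\| \le \|y_j\| + \delta\|s_j\|$, and $\|y_j\|=\|g_{j+1}-g_j\|$ is controlled by telescoping through ${\rm Proj}_{{\rm T}_{X_j}\mathcal{M}}\nabla f(X_{j+1})$: Lipschitz continuity of $\nabla f$ bounds one piece, and Lipschitz continuity of the Stiefel projection \eqref{p} (using $\|X\|_{\rm op}=1$ for $X\in\mathcal{M}$ and $\|\nabla f(X)\|\le\varrho$) bounds the other. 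For the upper bound on $\lambda_{\max}(B_k)$, each BFGS update satisfies $B_{k,i}\preceq B_{k,i-1} + \bar{y}_j\bar{y}_j^T/(s_j^T\bar{y}_j)$ since the subtracted rank-one term is PSD, so $\lambda_{\max}(B_{k,i}) \le \lambda_{\max}(B_{k,i-1}) + \|\bar{y}_j\|^2/(s_j^T\bar{y}_j)$. Combining (i) and (ii) bounds the increment by $4(L+\varrho+\delta)^2/\delta$; iterating $p$ times from $\lambda_{\max}(B_{k,0})=\delta$ yields $\lambda_{\max}(B_k)\le\kappa_2$.

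For the lower bound $\lambda_{\min}(B_k)\ge\kappa_1$ I pass to the inverse and control $\|H_k\|_{\rm op}$ through the symmetric form of \eqref{eq3_lbfgs_hk}, $H_{k,i} = U_j H_{k,i-1} U_j^T + \bar{\rho}_j s_j s_j^T$ with $U_j := I - \bar{\rho}_j s_j\bar{y}_j^T$. Using (i)--(ii) and $\bar{\rho}_j = 1/(s_j^T\bar{y}_j)$, one has $\|U_j\|_{\rm op} \le 1 + \|s_j\|\|\bar{y}_j\|/(s_j^T\bar{y}_j) \le \upsilon$ and $\bar{\rho}_j\|s_j\|^2 \le 4/\delta$, so the scalar recursion $\|H_{k,i}\|_{\rm op} \le \upsilon^2\|H_{k,i-1}\|_{\rm op} + 4/\delta$ holds. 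Starting from $\|H_{k,0}\|_{\rm op} = 1/\delta$ and summing the geometric series gives $\|H_k\|_{\rm op} \le \upsilon^{2p}/\delta + (4/\delta)(\upsilon^{2p}-1)/(\upsilon^2-1) = 1/\kappa_1$, hence $\lambda_{\min}(B_k)\ge\kappa_1$. The principal obstacle is this lower bound: unlike $\lambda_{\max}$, monotonicity fails for $\lambda_{\min}$ under the subtraction in \eqref{eq3_eu_bk}, which forces the detour through $H_k$; this detour pays the price of the factor $\upsilon^{2p}$ and hence the exponential dependence of $\kappa_1^{-1}$ on the memory depth $p$.
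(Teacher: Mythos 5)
Your proof is correct and follows essentially the same route as the paper's: reduce via \eqref{shasha} to spectral bounds on $B_k$, establish the two curvature estimates ${\rm tr}(s_j^T\overline{y}_j)\geq 0.25\delta\|s_j\|^2$ and $\|\overline{y}_j\|\leq(L+\varrho+\delta)\|s_j\|$ (the latter by splitting $y_j$ through the projection and using Lipschitz continuity of both $\nabla f$ and ${\rm Proj}_{{\rm T}_X\mathcal{M}}$), then bound $\lambda_{\max}(B_k)$ by iterating the additive rank-one increment and bound $\lambda_{\min}(B_k)$ by passing to the inverse recursion \eqref{eq3_lbfgs_hk}. The only cosmetic difference is that you make explicit the Rayleigh-quotient step showing the diagonal entries of $B_k$ lie in $[\lambda_{\min}(B_k),\lambda_{\max}(B_k)]$, which the paper leaves implicit.
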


\begin{proof}
To prove \eqref{kan}, by \eqref{shasha}, we only need to show $\lambda_{\max}(B_k)\leq\kappa_2$ and $\lambda_{\max}(H_k)\leq\kappa_1^{-1}$.
By \eqref{jia}, We have
\begin{eqnarray}\label{de}
{\rm tr}(s_{k-1}^T\overline{y}_{k-1})=\max\{0.25\delta\|s_{k-1}\|^2,{\rm tr}(s_{k-1}^Ty_{k-1})\}\geq0.25\delta\|s_{k-1}\|^2.
\end{eqnarray}
For simplicity, we use the notation $P_k={\rm Proj}_{{\rm T}_{X_k}\mathcal{M}}$.
From \eqref{p}, it follows that $\|P_k-P_{k-1}\|\leq\|X_k-X_{k-1}\|$,
Thus, by \eqref{vv}, we have
\begin{eqnarray*}
\|y_{k-1}\|&=&\|P_k\nabla f(X_k)-P_{k-1}\nabla f(X_{k-1})\| \\
&\leq&\|P_k\nabla f(X_k)-P_k\nabla f(X_{k-1})\|+\|P_k\nabla f(X_{k-1})-P_{k-1}\nabla f(X_{k-1})\| \\
&\leq& (L+\varrho)\|s_{k-1}\|,
\end{eqnarray*}
which together with \eqref{jia} implies
\begin{eqnarray}\label{cheng}
\|\overline{y}_{k-1}\|\leq(L+\varrho+\delta)\|s_{k-1}\|\quad\forall k.
\end{eqnarray}
By \eqref{eq3_eu_bk}, \eqref{de} and \eqref{cheng}, it holds that
\begin{eqnarray*}
\|B_{k,i}\|_2&\leq&\left\|B_{k,i-1}-\frac{B_{k,i-1}s_{j}s_{j}^TB_{k,i-1}}{{\rm tr}(s_{j}^TB_{k,i-1}s_{j})}\right\|_2
+\left\|\frac{\overline{y}_{j}\overline{y}_{j}^T}{{\rm tr}(s_{j}^T\overline{y}_{j})}\right\|_2\\
&\leq&\|B_{k,i-1}\|_2+\frac{(L+\varrho+\delta)^2\|s_{j}\|^2}{0.25\delta\|s_{j}\|^2},\quad j=k-(p-i+1),i=1,\dots,p,
\end{eqnarray*}
which together with $B_{k,0}=\delta I$ implies $\lambda_{\max}(B_k)\leq\kappa_2$.

Recall that $\overline{\rho}_j=1/{\rm tr}(s_{j}^T\overline{y}_{j})$.
By \eqref{cheng} again, we have $\overline{\rho}_j\left\|s_j\overline{y}_j^T\right\|\leq4(L+\varrho+\delta)/\delta$.
Thus, by \eqref{eq3_lbfgs_hk}, \eqref{de} and \eqref{cheng}, we have
\begin{eqnarray*}
\|H_{k,i}\|_2&\leq&\|I - \overline{\rho}_js_j\overline{y}_j^{T}\|_2\cdot
\|H_{k,i-1}\|_2\cdot\| I - \overline{\rho}_j\overline{y}_js_j^{T}\|_2+\left\|\overline{\rho}_j s_js_j^{T}\right\|_2\\
&\leq&(1 + \overline{\rho}_j\|s_j\overline{y}_j^{T}\|_2)^2
\|H_{k,i-1}\|_2 +4/\delta\\
&\leq&(1+4(L+\varrho+\delta)/\delta)^2\|H_{k,i-1}\|_2+4/\delta,\quad j=k-(p-i+1),i=1,\dots,p.
\end{eqnarray*}
By the above inequality, it is easy to prove $\lambda_{\max}(H_k)\leq\kappa_1^{-1}$ by induction. We omit the detail.

\end{proof}

Lemma \ref{lm_bk} shows that $\{\mathbf{B}_k\}$ are uniformly positive definite if $\mathbf{B}_k$ is generated by \eqref{hanwu}.
In the following, we show how to solve the subproblem \eqref{eq3_subprob}.
Let $\Lambda$ be the Lagrange multiplier for the constraint \eqref{eq3_constraint}.
Then $\Lambda\in\mathbb{S}^r$, where $\mathbb{S}^r$ denotes the set of symmetric $r\times r$ matrices.
The Lagrangian function for \eqref{eq3_subprob} is
\begin{eqnarray}
\mathcal{L}_k(V,\Lambda)&=& \langle\nabla f(X_k),V\rangle+\frac{1}{2}\|V\|_{\mathbf{B}_k}^2
+h(X_k+V)- \langle\mathcal{A}_k^* (\Lambda),V\rangle\nonumber\\
&=&\langle\nabla f(X_k)-\mathcal{A}_k^* (\Lambda),V\rangle+\frac{1}{2}{\rm tr}(V^T({\rm diag}B_k)V)
+h(X_k+V),\label{eq3_lag}
\end{eqnarray}
where $\mathcal{A}_k^*$ denotes the adjoint operator of $\mathcal{A}_k$.
For a fixed $\Lambda$, $\mathcal{L}_k(V,\Lambda)$ is a strongly convex function of $V$ since $B_k$ is a positive definite matrix.
We use $V(\Lambda)$ to denote the unique minimum of $\mathop{\min}\limits_V \mathcal{L}_k(V,\Lambda)$.
By \eqref{scale_prox_map} and \eqref{eq3_lag}, we have
\begin{eqnarray}
V(\Lambda) &=& {\rm prox}_h^{\mathbf{B}_k}\big(X_k-({\rm diag}B_k)^{-1}(\nabla f(X_k)-\mathcal{A}_k^*(\Lambda))\big)-X_k  \label{eq3_vk}\\
&=& {\rm prox}_h^{\mathbf{B}_k}(B(\Lambda))-X_k ,\nonumber
\end{eqnarray}
where $B(\Lambda):=X_k-({\rm diag}B_k)^{-1}(\nabla f(X_k)-\mathcal{A}_k^*(\Lambda))$ and $H_k$ is the inverse of $B_k$.
Substituting \eqref{eq3_vk} into \eqref{eq3_constraint} yields
\begin{equation}\label{eq3_e}
E(\Lambda)\equiv\mathcal{A}_k(V(\Lambda)) = V(\Lambda)^TX_k+X_k^TV(\Lambda) = 0.
\end{equation}

Similar to the discussion at \cite[p.221]{mashiqian2020},
we can prove that $E(\Lambda)$ is a monotone and Lipschitz continuous operator on $\mathbb{S}^r$.
Then the adaptive regularized semismooth Newton (ASSN) method can be used to solve \eqref{eq3_e}.
We give a brief description in the following. 

The vectorization of \eqref{eq3_e} can be written as
\begin{eqnarray}
\nonumber  {\rm vec}(E(\Lambda)) 
&=&
(X_k^T  \otimes  I_r)K_{nr}\mathrm{vec}(V (\Lambda )) + (I_r \otimes  X_k^T )\mathrm{vec}(V (\Lambda ))\\
\nonumber
&=& (K_{rr}+I_{r^2}) (I_r\otimes X_k^T) \big[{\rm prox}_{h}^{\mathbf{B}_k}({\rm vec}(X_k-({\rm diag}B_k)^{-1}\nabla f(X_k))\\
\nonumber & &  \quad\quad\quad\quad\quad\quad\quad\quad\quad\quad+ 2(I_r\otimes (({\rm diag}B_k)^{-1}X_k)) {\rm vec}(\Lambda)) - {\rm vec}(X_k)\big],
\end{eqnarray}
where $K_{nr}$ and $K_{rr}$ are comutation matrices.
Define
\begin{eqnarray}
\label{eq3_jac} \mathcal{G}({\rm vec}(\Lambda)) &:=& 2 (K_{rr}+I_{r^2}) (I_r\otimes X_k^T)
\mathcal{J}(y)\vert_{y={\rm vec}(B(\Lambda))}
(I_r\otimes (({\rm diag}B_k)^{-1}X_k)),\quad\quad
\end{eqnarray}
where $\mathcal{J}(y)$ is the generalized Jacobian of ${\rm prox}_h^{\mathbf{B}_k}(y)$. Then, we know that $\mathcal{G}({\rm vec}(\Lambda)) \in\partial {\rm vec}(E({\rm vec}(\Lambda)))$.

Denote $\overline{\rm vec}(\Lambda)$ as the vectorization of the lower triangular part of $\Lambda$.
Then there exists a duplication matrix $U_r\in\mathbb{R}^{r^2\times\frac{1}{2}r(r+1)}$ such that $\overline{\rm vec}(\Lambda)=U_r^+{\rm vec}(\Lambda)$, 
where $U_r^+=(U_r^TU_r)^{-1}U_r$ is the Moore-Penrose inverse of $U_r$. By \eqref{eq3_jac}, the generalized Jacobian of $\overline{\rm vec}(E(U_r\overline{\rm vec}(\Lambda)))$ can be written as
\begin{eqnarray}
\nonumber  \mathcal{G}(\overline{\rm vec}(\Lambda)) &=& U_r^+\mathcal{G}({\rm vec}(\Lambda))U_r
\\
\nonumber
&=& 4 U_r^+ (I_r\otimes X_k^T)
\mathcal{J}(y)\vert_{y={\rm vec}(B(\Lambda))}
(I_r\otimes (({\rm diag}B_k)^{-1}X_k))U_r.
\end{eqnarray}
At the current iterate $\Lambda_l$, to get the Newton direction $d_l$, we can apply the conjugate gradient method to solve the following equation
\begin{equation}\label{biao}
(\mathcal{G}(\overline{\rm vec}(\Lambda_l))+\eta I)d = - \overline{\rm vec}(E(\Lambda_l)),
\end{equation}
where $\eta>0$ is a regularization parameter. 
Then, we use the same strategy as that in \cite{assn2018} to obtain the next iterate $\Lambda_{l+1}$.
For more details, we refer the reader to \cite{mashiqian2020,assn2018}.

\section{Convergence Analysis of ManPQN Algorithm}\label{sec4}

In this section, we study the convergence properties of the ManPQN algorithm.
Under mild assumptions, we prove the global convergence of Algorithm \ref{algo1}.
We also analyze the local convergence rate of the ManPQN method.
It is proved that the iterates of the algorithms converge locally linearly to the nondegenerate local minimum point.

\subsection{Global Convergence}\label{sec41}

First, we give the following assumption which is required in the rest of the paper.

\begin{assumption}\label{as_func}
$f:\mathbb{R}^{n\times r}\rightarrow\mathbb{R}$ is smooth, and $\nabla f$ is Lipschitz continuous with Lipschitz constant $L$;
$h:\mathbb{R}^{n\times r}\rightarrow\mathbb{R}$ is a convex but nonsmooth function, and $h$ is Lipschitz continuous with Lipschitz constant $L_h$.
\end{assumption}

Given $X_k$, we denote the objective function of \eqref{eq3_subprob} by $\phi_k$, that is
\begin{equation}\label{eq3_subprob_func}
\phi_k(V) := \langle\nabla f(X_k),V\rangle+\frac{1}{2}\|V\|_{\mathcal{B}_k}^2+h(X_k+V),
\end{equation}
where $\mathcal{B}_{k}$ is a linear operator on $\mathbb{R}^{n\times r}$ satisfying $\mathcal{B}_k{\rm T}_{X_k}\mathcal{M} \subseteq {\rm T}_{X_k}\mathcal{M}$.
In our convergence analysis, the only requirement of $\mathcal{B}_k$ is that it satisfies \eqref{kan},
that is, $\kappa_1\|V\|^2
\leq\|V\|_{\mathcal{B}_{k}}^2\leq \kappa_2\|V\|^2$ for all $V\in {\rm T}_{X_k}\mathcal{M}$.
This and Assumption \ref{as_func} imply that $\phi_k$ is a convex function on ${\rm T}_{X_k}\mathcal{M}$.

Since $V_k=\mathop{\arg\min}\limits_{V\in{\rm T}_{X_k}\mathcal{M}}\phi_k(V)$, by \eqref{vv}, we have
\begin{equation}\label{eq4_sub_opt}
0\in{\rm Proj}_{{\rm T}_{X_k}\mathcal{M}}\partial\phi_k(V_k)={\rm grad}f(X_k) +\mathcal{B}_k[V_k]+{\rm Proj}_{{\rm T}_{X_k}\mathcal{M}}\partial h(X_k+V_k).
\end{equation}
If $V_k=0$, then $X_k$ satisfies \eqref{eq2_opt}, and therefore is a stationary point of \eqref{eq1_prob};
If $V_k\neq0$, the following result shows that $V_k$ is a descent direction of $\phi_k$.
The proof is similar to that of \cite[Lemma 5.1]{mashiqian2020}.
We give a proof for completeness.

\begin{lemma}\label{lm41_phi_dec}
Suppose Assumption \ref{as_func} holds. For any $\alpha\in [0,1]$, it holds that
\begin{eqnarray}\label{lai}
\phi_k(\alpha V_k)-\phi_k(0)\leq\frac{\alpha(\alpha-2)}{2}\|V_k\|_{\mathcal{B}_k}^2.
\end{eqnarray}
\end{lemma}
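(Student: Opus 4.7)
The plan is to exploit the special structure of $\phi_k$: it is the sum of a linear term in $V$, the quadratic $\frac{1}{2}\|V\|_{\mathcal{B}_k}^2$, and the convex function $h(X_k+V)$. Only the quadratic piece interacts nontrivially with the scaling $V \mapsto \alpha V$, so everything else can be handled by pure convexity. I would first rewrite $\phi_k(\alpha V_k)$ term by term: the linear part is linear in $\alpha$, the quadratic picks up a factor $\alpha^2$, and the $h$-part is bounded above via convexity by
\[
h(X_k+\alpha V_k) = h\bigl((1-\alpha)X_k+\alpha(X_k+V_k)\bigr) \leq (1-\alpha)h(X_k)+\alpha h(X_k+V_k).
\]

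Collecting these contributions and using $\phi_k(0)=h(X_k)$, $\phi_k(V_k)=\langle\nabla f(X_k),V_k\rangle+\tfrac12\|V_k\|_{\mathcal{B}_k}^2+h(X_k+V_k)$, one obtains after cancellation the identity
\[
\phi_k(\alpha V_k)-\phi_k(0) \leq \alpha\bigl(\phi_k(V_k)-\phi_k(0)\bigr) + \frac{\alpha(\alpha-1)}{2}\|V_k\|_{\mathcal{B}_k}^2.
\]
This essentially already reflects the shape of the claimed bound; what remains is to upgrade $\phi_k(V_k)-\phi_k(0)\leq 0$ (true since $V_k$ minimizes $\phi_k$ on ${\rm T}_{X_k}\mathcal{M}$) to the quantitative inequality $\phi_k(V_k)-\phi_k(0)\leq -\tfrac12\|V_k\|_{\mathcal{B}_k}^2$.

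The key observation is that $\phi_k$ is strongly convex on ${\rm T}_{X_k}\mathcal{M}$ with modulus $1$ with respect to the $\mathcal{B}_k$-norm: the quadratic term contributes exactly $\tfrac12\|\cdot\|_{\mathcal{B}_k}^2$ of strong convexity, while $\langle\nabla f(X_k),\cdot\rangle$ is affine and $h(X_k+\cdot)$ is convex. Applying the standard strong-convexity lower bound at the unconstrained minimizer $V_k$ of $\phi_k$ on ${\rm T}_{X_k}\mathcal{M}$ with the comparison point $0\in{\rm T}_{X_k}\mathcal{M}$ gives
\[
\phi_k(0) \geq \phi_k(V_k) + \frac{1}{2}\|V_k-0\|_{\mathcal{B}_k}^2,
\]
i.e., $\phi_k(V_k)-\phi_k(0)\leq -\tfrac12\|V_k\|_{\mathcal{B}_k}^2$. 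Substituting this into the previous display and simplifying yields exactly $\phi_k(\alpha V_k)-\phi_k(0)\leq \tfrac{\alpha(\alpha-2)}{2}\|V_k\|_{\mathcal{B}_k}^2$.

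There is no real obstacle here; the only point that deserves care is the justification that the full strong convexity modulus $1$ (in the $\mathcal{B}_k$-norm) survives the addition of $h(X_k+\cdot)$ and can be invoked with $V_k$ as the minimizer, which uses that $\mathcal{B}_k$ is uniformly positive definite on ${\rm T}_{X_k}\mathcal{M}$ (guaranteed by Lemma \ref{lm_bk}) and that $h$ is convex (Assumption \ref{as_func}). The bound at $\alpha=1$ recovers the usual descent inequality $\phi_k(V_k)-\phi_k(0)\leq -\tfrac12\|V_k\|_{\mathcal{B}_k}^2$, and at $\alpha=0$ it is trivially an equality, which is a useful sanity check on the computation.
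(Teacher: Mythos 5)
Your proposal is correct and follows essentially the same route as the paper: both reduce the claim to the descent inequality $\phi_k(V_k)-\phi_k(0)\leq-\tfrac12\|V_k\|_{\mathcal{B}_k}^2$ plus the convexity bound $h(X_k+\alpha V_k)-h(X_k)\leq\alpha\bigl(h(X_k+V_k)-h(X_k)\bigr)$, and then combine them. The only cosmetic difference is that you invoke the standard strong-convexity growth bound at the minimizer, whereas the paper derives that same inequality explicitly by applying the subgradient inequality to the convex part $\phi_k-\tfrac12\|\cdot\|_{\mathcal{B}_k}^2$ together with the optimality condition \eqref{eq4_sub_opt}.
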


\begin{proof}
By \eqref{eq4_sub_opt}, there exists $\xi\in\partial h(X_k+V_k)$ such that ${\rm grad}f(X_k) +\mathcal{B}_k[V_k]+{\rm Proj}_{{\rm T}_{X_k}\mathcal{M}}\xi=0$.
From $\xi+\nabla f(X_k)\in\partial\left(\phi_k-\frac{1}{2}\|\cdot\|_{\mathcal{B}_k}^2\right)(V_k)$,
it follows that
\begin{eqnarray}
\phi_k(0) -\phi_k(V_k)
&\geq& \langle\nabla f(X_k)+\xi,-V_k\rangle-\frac{1}{2}\|V_k\|_{\mathcal{B}_k}^2 \nonumber\\
&=& \langle{\rm grad}f(X_k)+{\rm Proj}_{{\rm T}_{X_k}\mathcal{M}}\xi+\mathcal{B}_k[V_k],-V_k\rangle+\frac{1}{2}\|V_k\|_{\mathcal{B}_k}^2 \nonumber\\
&=& \frac{1}{2}\|V_k\|_{\mathcal{B}_k}^2.    \label{eq33_1}
\end{eqnarray}
Since $h$ is a convex function, for all $0\leq\alpha\leq1$, we have
\begin{equation}\label{eq33_3}
h(X_k+\alpha V_k)-h(X_k)\leq \alpha(h(X_k+V_k)-h(X_k)).
\end{equation}
Combining \eqref{eq33_1} and \eqref{eq33_3} yields
\begin{eqnarray}
\nonumber \phi_k(\alpha V_k) -\phi_k(0) &=& \langle\nabla f(X_k),\alpha V_k\rangle
+ \frac{1}{2}\|\alpha V_k\|_{\mathcal{B}_k}^2 + h(X_k+\alpha V_k)-h(X_k)\\
\nonumber &\leq&  \alpha\left(
\langle\nabla f(X_k),V_k\rangle
+\frac{\alpha}{2}\|V_k\|_{\mathcal{B}_k}^2 +h(X_k+V_k)-h(X_k)\right)\\
\nonumber &=&  \alpha(\phi_k(V_k)-\phi_k(0)+\frac{\alpha-1}{2}\|V_k\|_{\mathcal{B}_k}^2)\\
\nonumber &\leq&  \frac{\alpha(\alpha-2)}{2}\|V_k\|_{\mathcal{B}_k}^2.
\end{eqnarray}
The assertion holds.
\end{proof}

In the rest of the paper, we use $F_k$ to denote $F(X_k)$.
By the definition of $l(k)$ (see \eqref{lk}), we have $F_{l(k)}=\max\limits_{\max\{k-m,0\}\leq j\leq k}F_j$.
We will use the notation
\begin{equation}\label{alpha}
\overline{\alpha}:=
\min\{1,\frac{(2-\sigma)\kappa_1}{2(\varrho M_2+\frac{1}{2}LM_1^2+L_hM_2)}\},
\end{equation}
where $L$ and $L_h$ are Lipschitz constants, $M_1$, $M_2$, $\sigma$, $\varrho$ and $\kappa_1$ are parameters in \eqref{eq5}, \eqref{eq6},
\eqref{nonmonotone}, \eqref{wang} and \eqref{kappa} respectively.

\begin{lemma}\label{lm42_line_search}
Suppose Assumption \ref{as_func} holds, and $\mathcal{B}_k$ satisfies \eqref{kan}. 
Let $\alpha_k$ be the stepsize of the $k$-th iteration of Algorithm \ref{algo1}.
Then $\alpha_k\geq\gamma\overline{\alpha}$, where $\overline{\alpha}$ is defined by \eqref{alpha} and $\gamma$ is the parameter of Algorithm \ref{algo1}.
Moreover,
\[
F_{k+1}-F_{l(k)}\leq-\frac{1}{2}\sigma\alpha_k\|V_k\|_{\mathcal{B}_k}^2.
\]
\end{lemma}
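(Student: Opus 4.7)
The plan has two parts corresponding to the two assertions in the lemma. The second assertion is essentially definitional: once we know the line-search condition \eqref{nonmonotone} is eventually satisfied (which we establish by the lower bound on $\alpha_k$), that inequality is precisely the claim $F_{k+1}-F_{l(k)}\leq -\tfrac{1}{2}\sigma\alpha_k\|V_k\|_{\mathcal{B}_k}^2$. So the real work is the lower bound $\alpha_k\geq\gamma\overline{\alpha}$.

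For the lower bound, I plan to show the stronger statement that \emph{every} $\alpha\in(0,\overline{\alpha}]$ satisfies the Armijo-type condition
\[
F({\bf R}_{X_k}(\alpha V_k))-F(X_k)\leq -\tfrac{1}{2}\sigma\alpha\|V_k\|_{\mathcal{B}_k}^2,
\]
which in turn implies \eqref{nonmonotone} since $F(X_k)\leq F_{l(k)}$. Given this, the usual backtracking argument yields $\alpha_k\geq\gamma\overline{\alpha}$: if the accepted $\alpha_k<\overline{\alpha}$, the preceding trial $\alpha_k/\gamma$ was rejected even though it lies in $(0,\overline{\alpha}]$, a contradiction, so $\alpha_k/\gamma>\overline{\alpha}$; otherwise $\alpha_k=1\geq\overline{\alpha}\geq\gamma\overline{\alpha}$.

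To prove the Armijo-type estimate, I would bound $F\circ{\bf R}$ by comparing it to evaluation at $X_k+\alpha V_k$. Using the $L$-Lipschitz descent lemma for $f$ together with Proposition \ref{prop2_1}, I get $f({\bf R}_{X_k}(\alpha V_k))\leq f(X_k)+\langle\nabla f(X_k),{\bf R}_{X_k}(\alpha V_k)-X_k\rangle+\tfrac{L}{2}M_1^2\alpha^2\|V_k\|^2$, and splitting the inner product via $\|{\bf R}_{X_k}(\alpha V_k)-X_k-\alpha V_k\|\leq M_2\alpha^2\|V_k\|^2$ together with $\|\nabla f(X_k)\|\leq\varrho$ gives a clean linear-in-$\alpha V_k$ term plus a quadratic remainder. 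For $h$, I use the $L_h$-Lipschitz property to replace $h({\bf R}_{X_k}(\alpha V_k))$ by $h(X_k+\alpha V_k)$ up to an $L_hM_2\alpha^2\|V_k\|^2$ error. Summing, I arrive at
\[
F({\bf R}_{X_k}(\alpha V_k))-F(X_k)\leq \alpha\langle\nabla f(X_k),V_k\rangle+h(X_k+\alpha V_k)-h(X_k)+C\alpha^2\|V_k\|^2,
\]
where $C=\varrho M_2+\tfrac{L}{2}M_1^2+L_hM_2$.

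The bridge to the $\mathcal{B}_k$-norm is Lemma \ref{lm41_phi_dec}. Expanding $\phi_k(\alpha V_k)-\phi_k(0)$ and using Lemma \ref{lm41_phi_dec} yields
\[
\alpha\langle\nabla f(X_k),V_k\rangle+h(X_k+\alpha V_k)-h(X_k)\leq -\alpha\|V_k\|_{\mathcal{B}_k}^2,
\]
so combined with the previous bound and the left half of \eqref{kan} (which gives $\|V_k\|^2\leq\kappa_1^{-1}\|V_k\|_{\mathcal{B}_k}^2$), I obtain
\[
F({\bf R}_{X_k}(\alpha V_k))-F(X_k)\leq \left(-\alpha+\tfrac{C}{\kappa_1}\alpha^2\right)\|V_k\|_{\mathcal{B}_k}^2.
\]
Requiring the right side to be $\leq -\tfrac{\sigma}{2}\alpha\|V_k\|_{\mathcal{B}_k}^2$ reduces to $\tfrac{C}{\kappa_1}\alpha\leq\tfrac{2-\sigma}{2}$, i.e., $\alpha\leq(2-\sigma)\kappa_1/(2C)$. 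Together with the restriction $\alpha\leq 1$ needed to apply Lemma \ref{lm41_phi_dec}, this is exactly the threshold $\overline{\alpha}$ in \eqref{alpha}, closing the argument.

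The main obstacle is keeping the quadratic remainders bookkept correctly when passing from ${\bf R}_{X_k}(\alpha V_k)$ to $X_k+\alpha V_k$ in both the smooth and nonsmooth pieces, and then matching the resulting coefficient with the definition of $\overline{\alpha}$; everything else is a routine Armijo/backtracking argument once Lemma \ref{lm41_phi_dec} is invoked to convert the mixed linear-plus-$h$ term into a clean multiple of $\|V_k\|_{\mathcal{B}_k}^2$.
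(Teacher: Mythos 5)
Your proposal is correct and follows essentially the same route as the paper: the same descent-lemma-plus-retraction bounds with the same constant $c_2=\varrho M_2+\tfrac{1}{2}LM_1^2+L_hM_2$, the same invocation of Lemma \ref{lm41_phi_dec} to convert the linear-plus-$h$ term into a multiple of $\|V_k\|_{\mathcal{B}_k}^2$, the same use of the lower bound in \eqref{kan}, and the same backtracking argument. The only cosmetic difference is that you compare against $F(X_k)$ and then pass to $F_{l(k)}$ at the end, whereas the paper inserts $F_{l(k)}$ earlier and keeps $\phi_k(\alpha V_k)-\phi_k(0)$ bundled rather than isolating the $-\alpha\|V_k\|_{\mathcal{B}_k}^2$ term explicitly.
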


\begin{proof}
Since $\nabla f$ is Lipschitz continuous with constant $L$, for any $\alpha>0$, we have
\begin{eqnarray}
 &&f({\bf R}_{X_k}(\alpha V_k)) \nonumber\\
 &\leq& f(X_k)+\langle\nabla f(X_k),{\bf R}_{X_k}(\alpha V_k)-X_k\rangle+\frac{L}{2}\|{\bf R}_{X_k}(\alpha V_k)-X_k\|^2  \nonumber\\
 &\leq& f(X_k)+\langle\nabla f(X_k),{\bf R}_{X_k}(\alpha V_k)-(X_k+\alpha V_k)\rangle+\langle\nabla f(X_k),\alpha V_k\rangle+\frac{1}{2}LM_1^2\|\alpha V_k\|^2,\nonumber\\
&\leq& f(X_k)+\langle\nabla f(X_k),\alpha V_k\rangle+(\varrho M_2+\frac{1}{2}LM_1^2)\|\alpha V_k\|^2\nonumber\\
&=& f(X_k)+\langle\nabla f(X_k),\alpha V_k\rangle+c_1\|\alpha V_k\|^2, \label{eq34_1}
\end{eqnarray}
where $c_1:=\varrho M_2+\frac{1}{2}LM_1^2$, the second inequality follows from \eqref{eq5},
and the third inequality follows from \eqref{eq6} and \eqref{wang}.

By Assumption \ref{as_func} and \eqref{eq6}, it holds that
\begin{eqnarray}
\nonumber h({\bf R}_{X_k}(\alpha V_k))-h(X_k+\alpha V_k)&\leq& L_h\|{\bf R}_{X_k}(\alpha V_k)-X_k-\alpha V_k\|\\
&\leq& L_hM_2\|\alpha V_k\|^2.  \label{eq34_1_h}
\end{eqnarray}
Combining \eqref{eq34_1} and \eqref{eq34_1_h} yields
\begin{eqnarray}
&&F({\bf R}_{X_k}(\alpha V_k))\nonumber\\
&\leq& f(X_k)+\langle\nabla f(X_k),\alpha V_k\rangle+(c_1+L_hM_2)\|\alpha V_k\|^2 +h(X_k+\alpha V_k)\label{lele}\\
&=& f(X_k)+\phi_k(\alpha V_k) +c_2\|\alpha V_k\|^2 - \frac{1}{2}\|\alpha V_k\|_{\mathcal{B}_k}^2 \nonumber\\
&\leq& F_{l(k)}+\phi_k(\alpha V_k)-\phi_k(0)+(c_2\kappa_1^{-1} -\frac{1}{2})\|\alpha V_k\|_{\mathcal{B}_k}^2,\label{quan}
\end{eqnarray}
where $c_2:= c_1+L_hM_2=\varrho M_2+\frac{1}{2}LM_1^2+L_hM_2$.

For any $0<\alpha\leq 1$, by \eqref{lai} and \eqref{quan}, we have
\[
F({\bf R}_{X_k}(\alpha V_k))\leq F_{l(k)}+(c_2\kappa_1^{-1}-\alpha ^{-1})\alpha^2\|V_k\|_{\mathcal{B}_k}^2.
\]
Thus, if $0<\alpha\leq\overline{\alpha}$, it holds that
\begin{equation}\label{eq34_3}
F({\bf R}_{X_k}(\alpha V_k))-F_{l(k)}
\leq (c_2\kappa_1^{-1}\overline{\alpha}-1)\alpha\|V_k\|_{\mathcal{B}_k}^2
\leq -\frac{1}{2}\sigma\alpha\|V_k\|_{\mathcal{B}_k}^2.
\end{equation}
By steps 8-11 of Algorithm \ref{algo1}, we conclude that $\alpha_k\geq\gamma\overline{\alpha}$.
Substituting $\alpha=\alpha_k$ into \eqref{eq34_3} yields
\begin{eqnarray}
F_{k+1}-F_{l(k)} &=& F({\bf R}_{X_k}(\alpha_kV_k))-F_{l(k)} \leq -\frac{1}{2}\sigma\alpha_k\|V_k\|_{\mathcal{B}_k}^2.\label{eq34_2}
\end{eqnarray}
The proof is complete.
\end{proof}

Now we are in a position to present the main result of this section, the global
convergence of Algorithm \ref{algo1}.

\begin{theorem}\label{thm41}
Suppose Assumption \ref{as_func} holds, and $\mathcal{B}_k$ satisfies \eqref{kan}. 
Then all accumulation points of $\{X_k\}$ are stationary points of problem \eqref{eq1_prob}.
\end{theorem}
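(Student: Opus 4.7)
The plan is to follow the classical Grippo--Lampariello--Lucidi scheme for nonmonotone line search, adapted to the Riemannian setting. First I would show that the majorant sequence $\{F_{l(k)}\}$ is monotonically nonincreasing: from $F_{k+1}\leq F_{l(k)}$ (Lemma~\ref{lm42_line_search}) and the definition \eqref{lk} of $l(k)$, one checks
\[
F_{l(k+1)}=\max_{\max\{k+1-m,0\}\leq j\leq k+1}F_j\leq\max\{F_{l(k)},F_{k+1}\}=F_{l(k)}.
\]
Since $F$ is continuous and $\mathcal{M}$ is compact, $F$ is bounded below on $\mathcal{M}$, so $\{F_{l(k)}\}$ converges to some $F^\ast$.

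Next I would show $\|V_k\|\to 0$. Applying Lemma~\ref{lm42_line_search} at index $l(k)-1$ gives
\[
\tfrac{1}{2}\sigma\alpha_{l(k)-1}\|V_{l(k)-1}\|_{\mathcal{B}_{l(k)-1}}^{2}\leq F_{l(l(k)-1)}-F_{l(k)}\longrightarrow 0,
\]
and combining $\alpha_{l(k)-1}\geq\gamma\overline{\alpha}$ with the lower bound $\|V\|_{\mathcal{B}_k}^{2}\geq\kappa_1\|V\|^{2}$ from \eqref{kan} yields $\|V_{l(k)-1}\|\to 0$. By the standard GLL induction argument, for each fixed $j\geq 1$ one propagates that $\|V_{l(k)-j}\|\to 0$ and $F_{l(k)-j+1}\to F^\ast$; after at most $m+1$ steps this covers all indices and forces $\|V_k\|\to 0$ along the entire sequence. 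The retraction bound \eqref{eq5} additionally gives $\|X_{k+1}-X_k\|\leq M_1\alpha_k\|V_k\|\to 0$.

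Finally, let $X^\ast$ be any accumulation point, say $X_{k_i}\to X^\ast$. The first-order condition \eqref{eq4_sub_opt} for the subproblem produces $\xi_{k_i}\in\partial h(X_{k_i}+V_{k_i})$ with
\[
0={\rm grad}f(X_{k_i})+\mathcal{B}_{k_i}[V_{k_i}]+{\rm Proj}_{{\rm T}_{X_{k_i}}\mathcal{M}}\xi_{k_i}.
\]
Since $h$ is Lipschitz with constant $L_h$, the subgradients $\xi_{k_i}$ are uniformly bounded, so along a further subsequence $\xi_{k_i}\to\xi^\ast$; upper semicontinuity of $\partial h$ (convexity of $h$) gives $\xi^\ast\in\partial h(X^\ast)$. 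The bound $\|\mathcal{B}_{k_i}[V_{k_i}]\|\leq\kappa_2\|V_{k_i}\|\to 0$ from \eqref{kan}, continuity of ${\rm grad}f$, and continuity of the projection ${\rm Proj}_{{\rm T}_X\mathcal{M}}$ in $X$ (formula \eqref{p}) let us pass to the limit and obtain $0={\rm grad}f(X^\ast)+{\rm Proj}_{{\rm T}_{X^\ast}\mathcal{M}}\xi^\ast$, which is exactly the stationarity condition \eqref{eq2_opt}.

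The main obstacle is the nonmonotone induction step in the second paragraph: one must carefully iterate the recursion $l(k+1)\leq l(k)+1$ to upgrade convergence of the subsequence $\{X_{l(k)-1}\}$ to that of the full sequence, controlling the manifold displacement through the retraction estimate \eqref{eq5} at every propagation level. A secondary technicality is the limiting argument for the subgradients, which requires recalling that Lipschitz continuity of $h$ in the ambient Euclidean space forces boundedness of the Euclidean Clarke subdifferential, after which the projection identity ${\rm Proj}_{{\rm T}_X\mathcal{M}}(\partial h(X))=\hat\partial h(X)$ for regular functions ties the limit back to the Riemannian optimality condition.
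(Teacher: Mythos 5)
Your proposal is correct and follows essentially the same route as the paper's proof: monotonicity of the majorant sequence $\{F_{l(k)}\}$, convergence to $F^\ast$ by compactness, the GLL induction to get $V_k\to 0$ along the whole sequence, and then passing to the limit in the subproblem optimality condition \eqref{eq4_sub_opt}. The only difference is that you spell out the final limiting argument (boundedness of the subgradients from Lipschitz continuity of $h$, closedness of the graph of $\partial h$, and continuity of the projection), which the paper compresses into a single sentence.
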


\begin{proof}
Let $X^*$ be an accumulation point of sequence $\{X_k\}$.
We need to prove that $X^*$ satisfies \eqref{eq2_opt}.
By Lemma \ref{lm42_line_search}, we have that
\begin{eqnarray}
\nonumber F_{l(k+1)} &=& \max_{0\leq j\leq \min\{m, k+1\}}F_{k+1-j}\\
\nonumber &=& \max\{F_{k+1}, \max_{0\leq j\leq \min\{m-1, k\}}F_{k-j}\}\\
\nonumber &\leq& \max\{F_{l(k)}-\frac{1}{2}\sigma\alpha_k\|V_k\|_{\mathcal{B}_k}^2, F_{l(k)}\},\ {\rm by} \eqref{eq34_2} \\
\nonumber &\leq& F_{l(k)},
\end{eqnarray}
which implies that $\{F_{l(k)}\}_k$ is a monotone nonincreasing sequence.
Since $\mathcal{M}$ is compact, $F_k$ is bounded below.
Thus, there exists a scalar $F^*$ such that
\begin{equation}\label{eq32_6}
\lim_{k\to\infty}F_{l(k)}=F^*.
\end{equation}

By \eqref{kan} and \eqref{eq34_2}, we have
\begin{eqnarray}
\nonumber F_{l(k)}&\leq& F_{l(l(k)-1)}-\frac{1}{2}\sigma\alpha_{l(k)-1}\|V_{l(k)-1}\|_{\mathcal{B}_{l(k)-1}}^2\\
\nonumber&\leq&F_{l(l(k)-1)}-\frac{1}{2}\sigma\kappa_1\alpha_{l(k)-1}\|V_{l(k)-1}\|^2.
\end{eqnarray}
From Lemma \ref{lm42_line_search}, it follows that $\alpha_k\geq\gamma\overline{\alpha}$ for all $k$,
Thus, we must have
\[
\lim_{k\to\infty}V_{l(k)-1}=0.
\]
Combining it with (\ref{eq32_6}) yields that
\begin{eqnarray}
\nonumber \lim_{k\to\infty}F_{l(k)-1}&=&\lim_{k\to\infty}F({\bf R}_{X_{l(k)-1}}(\alpha_{l(k)-1}V_{l(k)-1}))\\
\nonumber &=&\lim_{k\to\infty}F(X_{l(k)})=F^*.
\end{eqnarray}

For all $1\leq j\leq m$, we can prove by induction that
\begin{equation}\label{eq32_7}
\lim_{k\to\infty}V_{l(k)-j}=0, \
{\rm and} \
\lim_{k\to\infty}F_{l(k)-j}=F^*.
\end{equation}
The proof is similar to that of \cite[Theorem 1]{wright2008}, and so we omit it.

For any $k$, there exists an integer $1\leq j(k)\leq m$ such that $k=l(k+m)-j(k)$,
which together with \eqref{eq32_7} implies
\begin{equation}\label{eq32_8}
\lim_{k\to\infty}V_k=\lim_{k\to\infty}V_{l(k+m)-j(k)}=0,
\end{equation}
and
\begin{equation}\label{eq32_9}
\lim_{k\to\infty}F_k=\lim_{k\to\infty}F_{l(k+m)-j(k)}=F^*.
\end{equation}
By \eqref{eq32_8} and \eqref{eq4_sub_opt}, we can deduce that $X^*$ satisfies \eqref{eq2_opt},
which completes the proof.
\end{proof}

In the following, we introduce the definition of an $\epsilon$-stationary point of the problem \eqref{eq1_prob}.

\begin{definition}[$\epsilon$-stationary point \cite{mashiqian2020}]
Given $\epsilon>0$ and $X_k$ generated by Algorithm \ref{algo1}, we say that $X_k\in\mathcal{M}$ is an $\epsilon$-stationary point of \eqref{eq1_prob} if the solution $V_k$ to \eqref{eq3_subprob} satisfies $\|V_k\|_{F}\leq\epsilon$.
\end{definition}

In Algorithm \ref{algo1}, we use $\|V_k\|_{F}\leq\epsilon$ as our stopping criterion.
Similar to \cite[Theorem 5.5]{mashiqian2020}, we give the iteration complexity analysis of Algorithm \ref{algo1}.

\begin{corollary}\label{cor43}
Algorithm \ref{algo1} will find an $\epsilon$-stationary point in at most 
\[
(m+1)\left\lceil \frac{2(F(X_0)-F^*)}{\sigma\kappa_1\gamma\overline{\alpha}\epsilon^2}\right\rceil,
\]
iterations, where $m$, $\sigma$, $\kappa_1$, $\overline{\alpha}$ are defined in \eqref{nonmonotone}, \eqref{kappa}, \eqref{alpha} respectively
and $F^*$ is the optimal value of \eqref{eq1_prob}.
\end{corollary}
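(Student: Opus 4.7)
The plan is to exploit the nonmonotone descent inequality from Lemma \ref{lm42_line_search} along a carefully chosen subsequence, paying a factor of $(m+1)$ to circumvent the window size in the nonmonotone line search. Define the auxiliary sequence $n_j := j(m+1)$ for $j=0,1,2,\dots$ The proof of Theorem \ref{thm41} already shows that $\{F_{l(k)}\}$ is monotone nonincreasing, so in particular $F_{l(k_2)} \leq F_{l(k_1)}$ whenever $k_2 \geq k_1$.

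First I would apply Lemma \ref{lm42_line_search} at iteration index $k = l(n_{j+1}) - 1$ (which is well-defined since $l(n_{j+1}) \geq n_{j+1}-m = n_j + 1 \geq 1$). This gives
\[
F_{l(n_{j+1})} \leq F_{l(l(n_{j+1})-1)} - \tfrac{1}{2}\sigma\alpha_{l(n_{j+1})-1}\|V_{l(n_{j+1})-1}\|_{\mathcal{B}_{l(n_{j+1})-1}}^2.
\]
Next, the key index bound: since $l(n_{j+1}) - 1 \geq n_j$, the monotonicity of $\{F_{l(k)}\}$ yields $F_{l(l(n_{j+1})-1)} \leq F_{l(n_j)}$. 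Combining this with $\alpha_k \geq \gamma\overline{\alpha}$ (Lemma \ref{lm42_line_search}) and $\|V\|_{\mathcal{B}_k}^2 \geq \kappa_1\|V\|^2$ (equation \eqref{kan}) gives
\[
F_{l(n_{j+1})} \leq F_{l(n_j)} - \tfrac{1}{2}\sigma\kappa_1\gamma\overline{\alpha}\,\|V_{l(n_{j+1})-1}\|^2.
\]

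Now I would argue by contradiction. Suppose Algorithm \ref{algo1} has not yet found an $\epsilon$-stationary point after
\[
N := (m+1)\left\lceil \frac{2(F(X_0)-F^*)}{\sigma\kappa_1\gamma\overline{\alpha}\epsilon^2}\right\rceil
\]
iterations; then $\|V_k\| > \epsilon$ for every $k \leq N$. In particular this holds at each index $k = l(n_{j+1})-1 \leq n_{j+1}-1 \leq N$ for $j = 0,1,\dots,J-1$ where $J = \lceil 2(F(X_0)-F^*)/(\sigma\kappa_1\gamma\overline{\alpha}\epsilon^2)\rceil$. Telescoping the previous display from $j=0$ to $j=J-1$ and using $F_{l(n_0)} = F_{l(0)} = F(X_0)$ yields
\[
F_{l(n_J)} \leq F(X_0) - J\cdot\tfrac{1}{2}\sigma\kappa_1\gamma\overline{\alpha}\,\epsilon^2 \leq F(X_0) - (F(X_0)-F^*) = F^*,
\]
which (unless equality holds throughout) contradicts $F_{l(n_J)} \geq F^*$. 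A careful handling of the boundary case (equality forces $F_{l(n_J)} = F^*$ with $\|V_{l(n_{j+1})-1}\| = \epsilon$ at the critical iteration, giving the $\epsilon$-stationary point within the window) closes the argument.

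The only subtle step is the index chase, namely verifying $l(n_{j+1}) - 1 \geq n_j$ so that the monotonicity $F_{l(l(n_{j+1})-1)} \leq F_{l(n_j)}$ can be invoked; everything else is routine telescoping. The factor $(m+1)$ in the final complexity appears precisely because the subsequence $\{n_j\}$ advances by $m+1$ at a time in order to guarantee this index inequality.
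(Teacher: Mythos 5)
Your proof is correct and follows essentially the same route as the paper: both partition the iterations into blocks of length $m+1$, use the monotonicity of $\{F_{l(k)}\}$ together with Lemma \ref{lm42_line_search} to extract a per-block decrease of at least $\tfrac{1}{2}\sigma\kappa_1\gamma\overline{\alpha}\epsilon^2$, and telescope to a contradiction with $F_{l(\cdot)}\geq F^*$. The only difference is that you explicitly carry out the index chase ($l(n_{j+1})-1\geq n_j$) that the paper delegates to a citation of Dai's Theorem 3.2; also note your worry about the boundary case is unnecessary, since the stopping criterion $\|V_k\|_F\leq\epsilon$ makes the inequality $\|V_k\|>\epsilon$ strict and hence the contradiction clean.
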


\begin{proof}
Similar to the proof of Theorem 3.2 in \cite{dyh2002}, we can obtain that for $j\geq0$,
\begin{eqnarray}
\nonumber 
& &
F(X_{l((j+1)(m+1))}) - F(X_{l(j(m+1))}) 
\\
\nonumber
&\leq& 
\max_{1\leq i\leq m}\left\{
-\frac{1}{2}\sigma\alpha_{j(m+1)+i}\|V_{j(m+1)+i}\|_{\mathcal{B}_{j(m+1)+i}}^2\right\}\\
\label{eq43_1} 
&\leq& 
\max_{1\leq i\leq m}
\left\{-\frac{1}{2}\sigma\kappa_1\alpha_{j(m+1)+i}\|V_{j(m+1)+i}\|_{F}^2\right\}.
\end{eqnarray}

Given $K>0$, suppose that after $K(m+1)$ iterations, Algorithm \ref{algo1} does not terminate, which means that $\|V_k\|_{\mathcal{B}_k}^2>\kappa_1\epsilon^2$ for any $0\leq k\leq K(m+1)-1$.
It follows that
\begin{eqnarray}
\nonumber 
F(X_0)-F^*
&\geq& 
F(X_{l(0)})-F(X_{l(K(m+1))})\\
\nonumber 
&\geq& 
\sum_{j=0}^{K-1}\min_{1\leq i\leq m}
\left\{\frac{1}{2}\sigma\kappa_1\alpha_{j(m+1)+i}\|V_{j(m+1)+i}\|_{F}^2\right\}
> 
\frac{1}{2}\sigma\kappa_1\epsilon^2 K\gamma\overline{\alpha},
\end{eqnarray}
where the second inequality follows from \eqref{eq43_1}. Thus, Algorithm \ref{algo1} will return an $\epsilon$-stationary point in at most $(m+1)\lceil 2(F(X_0)-F^*)/(\sigma\kappa_1\gamma\overline{\alpha}\epsilon^2)\rceil$ iterations.
\end{proof}

\subsection{Locally Linear Convergence}\label{sec42}

The objective of this subsection is to show that Algorithm \ref{algo1} has a local linear convergence rate around the nondegenerate local minimum point.
Let $\{X_k\}$ be the sequence of iterates generated by Algorithm \ref{algo1} and $\overline{X}^*$ be any accumulation point of $\{X_k\}$. By
\eqref{eq32_9}, we know that
\begin{equation}\label{eq4_fstar}
F(\overline{X}^*)=F^*,
\end{equation}
where $F^*$ is the scalar in \eqref{eq32_9}.

%

We need the following assumption before presenting our main results.


\begin{assumption}\label{as43}
The function $f$ is twice continuously differentiable. The sequence $\{X_k\}$ has an accumulation point $X^*$ such that
\begin{equation}\label{eq4_hess}
\lambda_{\min}({\rm Hess} (f\circ{\bf R}_{X^*})(0_{X^*}))\geq\widetilde{\eta},
\end{equation}
where $\widetilde{\eta}>5L_hM_2$.
\end{assumption}


The constant $M_2$ is defined in \eqref{eq6}.
We should point out Assumption \ref{as43} is not a strong condition.
In some typical applications, $h(X)=\mu\|X\|_1$.
We can see that if 
\[
\mu<\frac{1}{5M_2}\lambda_{\min}({\rm Hess} (f\circ{\bf R}_{X^*})(0_{X^*})),
\]
then \eqref{eq4_hess} will be satisfied .

\begin{definition}($\varsigma$-strongly convex function \cite[Definition 2.1.3]{nesterov})\label{def_strongly_convex}
The function $g(x)$ is said to be a $\varsigma$-strongly convex function if $g(x)-\frac{1}{2}\varsigma\|x\|^2$ is convex,
where $\varsigma$ is called the convexity parameter of $g$.
\end{definition}
By the definition, if $g(x)$~is strongly convex with parameter $\varsigma$, it is easy to prove
\begin{eqnarray}\label{di}
g(x)-g(x^*)\geq\frac{\varsigma}{2}\|x-x^*\|^2\quad\forall x,
\end{eqnarray}
where $x^*$ is the unique minimizer of $g$. We also use the following property of strongly convex functions.
If $g$ is twice continuously differentiable, then
\begin{eqnarray}\label{lao}
\textrm{$\lambda_{\min}(\nabla^2g(x))\geq\varsigma,\quad\forall x$ $\Longleftrightarrow$ $g$~is strongly convex with parameter $\varsigma$.}
\end{eqnarray}
For a proof, see \cite{nesterov}.

To establish the main results, we need some preparing results.

\begin{lemma}\label{lm43}
Suppose Assumptions \ref{as_func} and \ref{as43} hold. Let $X^*$ be the accumulation point satisfying \eqref{eq4_hess}.
Then there exists a neighbourhood $\mathcal{U}_{X^*}$ of $X^*$ and $\epsilon>0$ such that:
\begin{enumerate}
\item[\rm(1)] For all $X\in \mathcal{U}_{X^*}$, $f\circ{\bf R}_X$ is a convex function on the set $\{\xi\in{\rm T}_X\mathcal{M} : \|\xi\|<\epsilon\}$.
\item[\rm(2)]
For all $X\in \mathcal{U}_{X^*}$,
\begin{equation}\label{eq4_eta}
F(X)-F(X^*)\geq\eta\|{\bf R}_{X}^{-1}(X^*)\|^2,
\end{equation}
for some $\eta>L_hM_2$.
\end{enumerate}
\end{lemma}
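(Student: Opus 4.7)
The plan is to treat the two parts in order: part (1) delivers the key local strong convexity of $f\circ{\bf R}_X$, which then drives part (2). For part (1), I would invoke continuity of the Riemannian Hessian jointly in $(X,\xi)$. Since $f$ is twice continuously differentiable (Assumption \ref{as43}) and the retraction ${\bf R}$ is smooth, the map $(X,\xi)\mapsto{\rm Hess}(f\circ{\bf R}_X)(\xi)$, viewed as a symmetric bilinear form on ${\rm T}_X\mathcal{M}\subset\mathbb{R}^{n\times r}$, depends continuously on $(X,\xi)$. At $(X^*,0_{X^*})$ its smallest eigenvalue is at least $\widetilde{\eta}>5L_hM_2$, so by continuity there exist a neighborhood $\mathcal{U}_{X^*}$ of $X^*$ and $\epsilon>0$ such that for every $X\in\mathcal{U}_{X^*}$ and every $\xi\in{\rm T}_X\mathcal{M}$ with $\|\xi\|<\epsilon$, this smallest eigenvalue remains bounded below by some $\widetilde{\eta}'\in(5L_hM_2,\widetilde{\eta})$. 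By \eqref{lao}, $f\circ{\bf R}_X$ is then $\widetilde{\eta}'$-strongly convex on that ball, which establishes (1).

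For part (2), I combine (a) the strong convexity from (1), (b) convexity and $L_h$-Lipschitzness of $h$, (c) first-order stationarity of $X^*$ from Theorem \ref{thm41}, and (d) the retraction bound \eqref{eq6}. Set $\xi:={\bf R}_X^{-1}(X^*)$, which lies in the ball of radius $\epsilon$ once $X$ is sufficiently close to $X^*$. The standard strong-convexity inequality for $g(\cdot):=f\circ{\bf R}_X(\cdot)$ evaluated at $0$ and $\xi$ yields $f(X)-f(X^*)\geq -\langle\nabla g(\xi),\xi\rangle+\tfrac{\widetilde{\eta}'}{2}\|\xi\|^2$. Stationarity of $X^*$ provides some $u\in\partial h(X^*)$ with $\|u\|\leq L_h$ and ${\rm Proj}_{{\rm T}_{X^*}\mathcal{M}}u=-{\rm grad}f(X^*)$. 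Convexity of $h$ together with the identity $X-X^*=-\xi-({\bf R}_X(\xi)-X-\xi)$ and \eqref{eq6} then give $h(X)-h(X^*)\geq\langle u,X-X^*\rangle\geq -\langle{\rm Proj}_{{\rm T}_X\mathcal{M}}u,\xi\rangle-L_hM_2\|\xi\|^2$, using that $\xi\in{\rm T}_X\mathcal{M}$. Summing the two bounds, the remaining linear term equals $\langle{\rm Proj}_{{\rm T}_X\mathcal{M}}u-\nabla g(\xi),-\xi\rangle$; by $\nabla g(0)={\rm grad}f(X)$, continuity of ${\rm grad}f$, and continuity of the projection \eqref{p}, both ${\rm Proj}_{{\rm T}_X\mathcal{M}}u\to-{\rm grad}f(X^*)$ and $\nabla g(\xi)\to{\rm grad}f(X^*)$ as $(X,\xi)\to(X^*,0)$, so this cross term is dominated by $\varepsilon_0\|\xi\|^2$ for any prescribed $\varepsilon_0>0$ after shrinking $\mathcal{U}_{X^*}$. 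Thus $F(X)-F(X^*)\geq(\tfrac{\widetilde{\eta}'}{2}-L_hM_2-\varepsilon_0)\|\xi\|^2$, and the condition $\widetilde{\eta}'>5L_hM_2$ ensures the coefficient exceeds $L_hM_2$ once $\varepsilon_0$ is small, giving \eqref{eq4_eta} with some $\eta>L_hM_2$.

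The main obstacle I foresee is the careful treatment of the residual cross term $\langle{\rm Proj}_{{\rm T}_X\mathcal{M}}u-\nabla g(\xi),-\xi\rangle$: the vectors $u$ and $\nabla g(\xi)$ come from different objects (one is a Euclidean subgradient at $X^*$, the other the Euclidean gradient of $g$ at a point in ${\rm T}_X\mathcal{M}$), and one must show that after the appropriate projection they agree to leading order, with the mismatch of size $O(\|\xi\|)$. This is precisely where the generous factor $5$ in Assumption \ref{as43} is spent: it provides a safety margin beyond the explicit $L_hM_2\|\xi\|^2$ retraction error, leaving enough slack to absorb the continuity-based $\varepsilon_0\|\xi\|^2$ term and still secure $\eta>L_hM_2$ in the final bound.
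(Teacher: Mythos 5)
Part (1) of your proposal matches the paper's argument. Part (2), however, has a genuine gap in the treatment of the cross term. After summing your two inequalities the leftover linear term is $\langle \nabla g(\xi)+{\rm Proj}_{{\rm T}_X\mathcal{M}}u,\,-\xi\rangle$ (note: the \emph{sum}, not the difference you wrote — your own limits ${\rm Proj}_{{\rm T}_X\mathcal{M}}u\to-{\rm grad}f(X^*)$ and $\nabla g(\xi)\to{\rm grad}f(X^*)$ show the sum is what vanishes). The claim that this term is bounded by $\varepsilon_0\|\xi\|^2$ for an \emph{arbitrarily small} prescribed $\varepsilon_0$ after shrinking $\mathcal{U}_{X^*}$ is not justified. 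Plain continuity only gives $\|\nabla g(\xi)+{\rm Proj}_{{\rm T}_X\mathcal{M}}u\|\le\delta$ on the neighbourhood, hence a bound $\delta\|\xi\|$ that is \emph{first} order in $\|\xi\|$ and swamps the quadratic terms as $\xi\to0$. To get a quadratic bound you must use Lipschitz continuity of $(X,\xi)\mapsto{\rm Proj}_{{\rm T}_X\mathcal{M}}u+{\rm grad}(f\circ{\bf R}_X)(\xi)$ together with $\|X-X^*\|=O(\|\xi\|)$, which yields $C_0\|\xi\|^2$ — but $C_0$ is a \emph{fixed} constant (it involves $\varrho=\sup\|\nabla f\|$ through the $X$-dependence of the projection, and the second derivatives of ${\bf R}$) that does not shrink with the neighbourhood and bears no relation to $L_hM_2$ or $\widetilde{\eta}$. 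Since Assumption \ref{as43} only guarantees $\widetilde{\eta}>5L_hM_2$, nothing prevents $\tfrac{\widetilde{\eta}'}{2}-L_hM_2-C_0$ from being negative, so your final inequality does not follow.

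The source of the trouble is that you anchor the strong convexity at $X$, where there is no first-order optimality, so a gradient/subgradient cross term necessarily appears. The paper avoids it by anchoring everything at $X^*$: it defines $\Gamma(\xi)=f({\bf R}_{X^*}(\xi))+h(X^*+\xi)$ on ${\rm T}_{X^*}\mathcal{M}$, observes from \eqref{eq2_opt} and \eqref{r} that $0\in\partial\Gamma(0_{X^*})$, and applies the minimizer form \eqref{di} of strong convexity, which has no linear term at all. The only corrections are $|h({\bf R}_{X^*}(\xi))-h(X^*+\xi)|\le L_hM_2\|\xi\|^2$ from \eqref{eq6}, and, at the very end, the conversion from $\|{\bf R}_{X^*}^{-1}(X)\|$ to $\|{\bf R}_X^{-1}(X^*)\|$ via the near-isometry property of retractions (\cite[Lemma 6]{ring2012}), which costs only a factor $(1-\varepsilon)^2/(1+\varepsilon)^2$ that genuinely can be pushed arbitrarily close to $1$. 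That is also where the factor $5$ is actually spent: $\tfrac{9}{20}\widetilde{\eta}-L_hM_2\ge\tfrac14\widetilde{\eta}$, then the distance conversion degrades $\tfrac14$ to $\tfrac15$, and $\tfrac15\widetilde{\eta}>L_hM_2$ closes the argument. I recommend you restructure part (2) along these lines rather than trying to absorb the cross term.
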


\begin{proof}
$\rm(1)$. Since $f$ is twice continuously differentiable and ${\bf R}$ is smooth,
$\lambda_{\min}({\rm Hess} (f\circ{\bf R}))$ is a continuous function of ${\rm T}\mathcal{M}$,
which together with \eqref{eq4_hess} implies that
there exists a neighbourhood $\mathcal{U}_{X^*}$ of $X^*$ and $\epsilon>0$ such that
\begin{equation}\label{eq4_hess2}
\lambda_{\min}({\rm Hess} (f\circ{\bf R}_{X})(\xi_{X}))>\frac{9}{10}\widetilde{\eta},
\end{equation}
for all $X\in\mathcal{U}_{X^*}$ and all $\xi_{X}\in\{\xi\in{\rm T}_X\mathcal{M} : \|\xi\|<\epsilon\}$.
Thus $f\circ{\bf R}_X$ is a convex function on $\{\xi\in{\rm T}_X\mathcal{M} : \|\xi\|<\epsilon\}$.


$\rm(2)$.
By \eqref{lao} and \eqref{eq4_hess2}, we know that $f({\bf R}_{X^*}(\xi))$ is $9\widetilde{\eta}/10$-strongly convex on $\mathcal{D}_{X^*}$ where $\mathcal{D}_{X^*}:=\{\xi\in{\rm T}_{X^*}\mathcal{M} : \|\xi\|<\epsilon\}$.

Define $\Gamma:{\rm T}_{X^*}\mathcal{M}\rightarrow\mathbb{R}$ by $\Gamma(\xi)=f({\bf R}_{X^*}(\xi))+h(X^*+\xi)$.
Then $\Gamma$ is also $9\widetilde{\eta}/10$-strongly convex on $\mathcal{D}_{X^*}$.
By \eqref{r}, we have
\begin{eqnarray*}
\partial\Gamma(0_{X^*})&=&{\rm Proj}_{{\rm T}_{X^*}\mathcal{M}}\big({\rm D}{\bf R}_{X^*}(0_{X^*})\nabla f(X^*)+\partial h(X^*)\big)\\
&=&{\rm grad} f(X^*)+{\rm Proj}_{{\rm T}_{X^*}\mathcal{M}}(\partial h(X^*)),
\end{eqnarray*}
which together with \eqref{eq2_opt} implies $0\in \partial\Gamma(0_{X^*})$.
Thus $0_{X^*}$ is the unique minimizer of $\Gamma$ in $\mathcal{D}_{X^*}$.
By \eqref{di}, we have
\begin{equation}\label{t1}
\Gamma(\xi)-F(X^*)=\Gamma(\xi)-\Gamma(0_{X^*})\geq\frac{9}{20}\widetilde{\eta}\|\xi\|^2\quad\forall\xi\in\mathcal{D}_{X^*}.
\end{equation}
Let $\varphi(\xi)=h({\bf R}_{X^*}(\xi))-h(X^*+\xi)$.
From \eqref{eq6} and Assumption \ref{as_func}, it holds that
\begin{equation}\label{t2}
\vert\varphi(\xi)\vert\leq L_hM_2\|\xi\|^2\quad\forall\xi\in\mathcal{D}_{X^*}.
\end{equation}
By \eqref{t1} and \eqref{t2}, we can deduce that
\begin{eqnarray}
F({\bf R}_{X^*}(\xi))-F(X^*) &=&
 f({\bf R}_{X^*}(\xi))+ h({\bf R}_{X^*}(\xi))-F(X^*)\nonumber\\
&=& \Gamma(\xi)+ \varphi(\xi)-\Gamma(0_{X^*})\nonumber\\
&\geq& (\frac{9}{20}\widetilde{\eta}-L_hM_2)\|\xi\|^2.\label{eq4_3_f}
\end{eqnarray}
Substituting $\xi={\bf R}_{X^*}^{-1}(X)$ into \eqref{eq4_3_f} yields
\begin{equation}\label{eq4_strong_F}
F(X)-F(X^*)\geq\frac{1}{4}\widetilde{\eta}\|{\bf R}_{X^*}^{-1}(X)\|^2.
\end{equation}

By \cite[Lemma 6]{ring2012}, for any $\varepsilon>0$, there exists a neighbourhood $\mathcal{U}_{X^*}$ of $X^*$ and $\varepsilon^{\prime}>0$ such that for all $X\in\mathcal{U}_{X^*}$ and $V,W\in{\rm T}_X\mathcal{M}$ with $\|V\|,\|W\| <\varepsilon^{\prime}$,
\begin{equation}\label{eq4_hw}
(1-\varepsilon)\|V-W\|  \leq {\rm dist}({\bf R}_X(V),{\bf R}_X(W)) \leq (1+\varepsilon)\|V-W\| .
\end{equation}
Assume that $\mathcal{U}_{X^*}$ is small enough such that $\|{\bf R}_{X^*}^{-1}(X)\|<\varepsilon^{\prime}$ and $\|{\bf R}_{X}^{-1}(X^*)\|<\varepsilon^{\prime}$ for any $X\in\mathcal{U}_{X^*}$.
By \eqref{eq4_hw}, we have
\begin{equation}\label{qin}
(1-\varepsilon)\|{\bf R}_{X}^{-1}(X^*)\|  \leq \|X-X^*\| \leq (1+\varepsilon)\|{\bf R}_{X^*}^{-1}(X)\|,
\end{equation}
where the first inequality follows from substituting $V=0_X$ and $W={\bf R}_{X}^{-1}(X^*)$ into \eqref{eq4_hw}, and the second inequality from $V={\bf R}_{X^*}^{-1}(X)$ and $W=0_{X^*}$ in \eqref{eq4_hw}.
We can choose $\varepsilon$ satisfying $\varepsilon<(\sqrt{5}-2)^2$.  By \eqref{eq4_strong_F} and \eqref{qin}, we have
\begin{equation*}
F(X)-F(X^*)\geq\frac{(1-\varepsilon)^2}{4(1+\varepsilon)^2}\widetilde{\eta}\|{\bf R}_{X}^{-1}(X^*)\|^2\geq\frac{1}{5}\widetilde{\eta}\|{\bf R}_{X}^{-1}(X^*)\|^2\quad\forall X\in\mathcal{U}_{X^*},
\end{equation*}
which together with $\widetilde{\eta}>5L_hM_2$ implies \eqref{eq4_eta}.
\end{proof}

Under the condition of Assumption \ref{as43}, from the following result,
we know that the sequence $\{X_k\}$ has only one accumulation point $X^*$, which is of course the limit of $\{X_k\}$.


\begin{theorem}\label{thm4_new}
Suppose Assumptions \ref{as_func} and \ref{as43} hold, and $X^*$ is the accumulation point satisfying \eqref{eq4_hess}. Then,
$X_k$ converges to $X^*$.
\end{theorem}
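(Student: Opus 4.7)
The plan is to leverage the quadratic growth property from Lemma \ref{lm43}: once an iterate enters a small enough neighbourhood of $X^*$, it becomes trapped nearby because $F_k$ tends to $F^* = F(X^*)$, and this forces the entire sequence to converge to $X^*$ rather than merely having $X^*$ as one accumulation point among many.

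First I would fix the neighbourhood $\mathcal{U}_{X^*}$ supplied by Lemma \ref{lm43}, on which $F(X)-F(X^*)\geq\eta\|{\bf R}_X^{-1}(X^*)\|^2$ with $\eta>L_hM_2$, shrinking if necessary so that the distortion bound \eqref{eq4_hw} gives $\|X-X^*\|\leq(1+\varepsilon)\|{\bf R}_X^{-1}(X^*)\|$ for some $\varepsilon\in(0,1)$. Combining these two estimates yields the Euclidean-norm quadratic growth
\[
\|X-X^*\|^2 \leq \frac{(1+\varepsilon)^2}{\eta}\bigl(F(X)-F(X^*)\bigr)\qquad\forall X\in\mathcal{U}_{X^*}.
\]
Recall $F(X^*)=F^*$ by \eqref{eq4_fstar}, $F_{l(k)}\searrow F^*$ by \eqref{eq32_6}, $F_k\to F^*$ by \eqref{eq32_9}, and $V_k\to 0$ by \eqref{eq32_8}. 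Choose $r>0$ with $B(X^*,r)\subset\mathcal{U}_{X^*}$, and an index $k_0$ such that for every $k\geq k_0$,
\[
F_{l(k)}-F^* < \frac{\eta r^2}{4(1+\varepsilon)^2},\qquad M_1\|V_k\|<\frac{r}{2}.
\]

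Because $X^*$ is an accumulation point, I can pick $k_j\geq k_0$ from the convergent subsequence with $X_{k_j}\in B(X^*,r/2)$, and then prove by induction that $X_k\in B(X^*,r)$ for every $k\geq k_j$. The base case is clear. For the inductive step, assume $X_k\in B(X^*,r)\subset\mathcal{U}_{X^*}$; the quadratic growth together with $F_k\leq F_{l(k)}$ gives $\|X_k-X^*\|\leq(1+\varepsilon)\sqrt{(F_{l(k)}-F^*)/\eta}<r/2$, and then the retraction bound \eqref{eq5} and the triangle inequality yield
\[
\|X_{k+1}-X^*\| \leq M_1\|\alpha_k V_k\| + \|X_k-X^*\| < \frac{r}{2} + \frac{r}{2} = r.
\]
Once this invariance is established, the quadratic growth applied at $X_k$ combined with $F_k\to F^*$ from \eqref{eq32_9} forces $\|X_k-X^*\|\to 0$, which is the desired conclusion.

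The main subtlety I expect is the nonmonotonicity of the line search: since $F_k$ itself need not decrease, the trap-the-iterate induction cannot be driven by $F_k$ directly and must instead rely on the monotone envelope $F_{l(k)}$, which is why in the induction I pass through the bound $F_k-F^*\leq F_{l(k)}-F^*$. A direct transplant of the Euclidean monotone proof would fail here, and this is precisely the place where the nonmonotone window parameter $m$ enters the local argument.
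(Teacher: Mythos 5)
Your proof is correct, but it takes a more self-contained route than the paper's. The paper's own proof is much shorter: it observes that, by \eqref{eq4_eta} together with the fact that every accumulation point $\overline{X}^*$ satisfies $F(\overline{X}^*)=F^*$ (see \eqref{eq4_fstar}), $X^*$ is the unique minimizer of $F$ on a retraction-neighbourhood $\overline{\mathcal{U}}_{X^*}$ and hence an isolated accumulation point of $\{X_k\}$; since $\|X_{k+1}-X_k\|\le M_1\alpha_k\|V_k\|\to0$ by \eqref{eq5} and \eqref{eq32_8}, convergence then follows at once from the isolated-accumulation-point lemma \cite[Lemma (4.10)]{more1983}. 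You instead inline the capture argument: you convert \eqref{eq4_eta} into Euclidean quadratic growth via the distortion bound \eqref{eq4_hw}, and run a trapping induction driven by the monotone envelope $F_{l(k)}\downarrow F^*$ and the step-length bound $M_1\|V_k\|<r/2$. Both arguments rest on the same two ingredients ($X^*$ is a strict local minimizer; consecutive iterates get arbitrarily close), so the strategies are cousins, but yours avoids the external citation at the cost of a longer induction and additionally exploits the quantitative decay of $F_{l(k)}$, which the paper's topological argument does not need. Your handling of the nonmonotone line search --- passing through $F_k\le F_{l(k)}$ rather than pretending $F_k$ decreases --- is exactly the right fix and matches how the paper uses the envelope elsewhere (e.g.\ in \eqref{zheng}).
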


\begin{proof}
Let $\mathcal{D}_{X^*}:=\{\xi\in{\rm T}_{X^*}\mathcal{M} : \|\xi\|<\epsilon\}$ be the neighbourhood defined in Lemma \ref{lm43}. 
Define $\overline{\mathcal{U}}_{X^*}:=\{{\bf R}_{X^*}(\xi): \xi\in \mathcal{D}_{X^*}\}$.
By \eqref{eq4_eta}, $X^*$ is the unique minimizer of $F$ in $\overline{\mathcal{U}}_{X^*}$,
which together with \eqref{eq4_fstar} implies that $X^*$ is an isolated accumulation point of $\{X_k\}$.
Since $V_k\rightarrow0$ by \eqref{eq32_8}, we have 
\[
\|X_{k+1}-X_k\|=\|{\bf R}_{X_{k}}(\alpha_kV_k)-X_k\|
\leq M_1\alpha_k\|V_k\|
\rightarrow0.
\]
By \cite[Lemma (4.10)]{more1983}, we can obtain that $X_k\rightarrow X^*$.
\end{proof}

From Lemma \ref{lm42_line_search}, we know that the stepsize $\alpha_k$ satisfies
$\gamma\overline{\alpha}<\alpha_k\leq 1$ for all $k\geq 0$, where $\overline{\alpha}$ is defined in \eqref{alpha}.
We will use this fact to prove the local linear convergence of Algorithm \ref{algo1}. 
Our proof is based on a technique used in \cite[Theorem 3.4]{huangyakui2015}.

\begin{theorem}\label{thm42}
Suppose Assumptions \ref{as_func} and \ref{as43} hold, $X^*$ is the accumulation point satisfying \eqref{eq4_hess}, and $\mathcal{B}_k$ satisfies \eqref{kan}.
Then there exists an integer $K$, $\mu>0$ and $\tau\in(0,1)$ such that
\begin{equation}\label{huo}
F(X_k)-F(X^*)\leq \mu\tau^{k-K}\big(F(X_{l(K)})-F(X^*)\big), \ {\rm for \ all}\ k> K.
\end{equation}
\end{theorem}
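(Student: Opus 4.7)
The plan is to bootstrap Lemma~\ref{lm42_line_search} into a Polyak--{\L}ojasiewicz-type contraction for $\phi_k$ near $X^*$, then lift the resulting one-step contraction to R-linear decrease of the max sequence $\{F_{l(k)}\}$ across windows of size $m+1$.

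By Theorem~\ref{thm4_new} the iterates converge, so pick $K$ such that $X_k$ lies in the neighborhood $\mathcal{U}_{X^*}$ of Lemma~\ref{lm43} for all $k\geq K$. Setting $W_k:={\bf R}_{X_k}^{-1}(X^*)\in{\rm T}_{X_k}\mathcal{M}$, I establish a PL-type inequality by estimating $\phi_k(\alpha W_k)-\phi_k(0)$ for $\alpha\in[0,1]$: convexity of $f\circ{\bf R}_{X_k}$ near $0_{X_k}$ (Lemma~\ref{lm43}(1)) gives $\alpha\langle\nabla f(X_k),W_k\rangle\leq\alpha(f(X^*)-f(X_k))$; convexity of $h$ combined with \eqref{eq6} and the $L_h$-Lipschitz bound gives $h(X_k+\alpha W_k)-h(X_k)\leq\alpha(h(X^*)-h(X_k))+\alpha L_h M_2\|W_k\|^2$; the quadratic term is bounded by $\tfrac{1}{2}\alpha^2\kappa_2\|W_k\|^2$; and Lemma~\ref{lm43}(2) yields $\|W_k\|^2\leq(F(X_k)-F^*)/\eta$ with $\eta>L_hM_2$. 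Summing, one gets $\phi_k(\alpha W_k)-\phi_k(0)\leq-\alpha\bigl[1-(\alpha\kappa_2/2+L_hM_2)/\eta\bigr](F(X_k)-F^*)$, and choosing $\alpha^*:=\min\{1,(\eta-L_hM_2)/\kappa_2\}$ keeps the bracket uniformly positive, so the minimality of $V_k$ on ${\rm T}_{X_k}\mathcal{M}$ gives
\[
\phi_k(V_k)-\phi_k(0)\leq\phi_k(\alpha^*W_k)-\phi_k(0)\leq -c_0\bigl(F(X_k)-F^*\bigr)
\]
for some $c_0>0$.

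Next, reworking the chain \eqref{lele}--\eqref{quan} while retaining $F(X_k)$ as the baseline, for every $\alpha\in[0,1]$,
\[
F({\bf R}_{X_k}(\alpha V_k))\leq F(X_k)+\alpha(\phi_k(V_k)-\phi_k(0))+\alpha(c_2\alpha-\kappa_1/2)\|V_k\|^2.
\]
With $\tilde\alpha:=\min\{1,\kappa_1/(2c_2)\}$ the last term is non-positive for $\alpha\leq\tilde\alpha$, and combined with the PL bound this yields $F({\bf R}_{X_k}(\alpha V_k))-F^*\leq(1-\alpha c_0)(F(X_k)-F^*)$. Since $\sigma\in(0,1)$ forces $\overline{\alpha}\geq\tilde\alpha$, Lemma~\ref{lm42_line_search} gives $\alpha_k>\gamma\overline{\alpha}\geq\gamma\tilde\alpha$; a case split on whether $\alpha_k\leq\tilde\alpha$ or $\alpha_k>\tilde\alpha$ (leaning on the Armijo inequality against $F_{l(k)}$ in the second case) produces, within any block of $m+1$ iterations past $K$, a max-sequence contraction
\[
F_{l(k+m+1)}-F^*\leq\rho\bigl(F_{l(k)}-F^*\bigr)
\]
for some $\rho\in(0,1)$. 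Iterating this block bound yields $F_{l(K+n(m+1))}-F^*\leq\rho^n(F_{l(K)}-F^*)$, and together with $F(X_k)\leq F_{l(k)}$ and the nonincreasing property of $\{F_{l(k)}\}$ this converts to the claimed estimate with $\tau:=\rho^{1/(m+1)}\in(0,1)$ and $\mu:=1/\rho>0$.

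The main obstacle is the block contraction in the previous paragraph: the one-step descent from $F(X_k)$ only holds for $\alpha\leq\tilde\alpha$, whereas the Armijo stepsize $\alpha_k$ may exceed $\tilde\alpha$. The resolution hinges on the nonmonotone mechanism: when $\alpha_k>\tilde\alpha$, the line-search inequality $F(X_{k+1})\leq F_{l(k)}-\tfrac{\sigma\alpha_k}{2}\|V_k\|^2_{\mathcal{B}_k}$ supplies a decrease relative to $F_{l(k)}$, and the $m$-iteration buffer absorbs those transient indices where the direct contraction from $F(X_k)$ is unavailable, leaving the $l$-indexed subsequence contracting by a uniform factor every $m+1$ steps.
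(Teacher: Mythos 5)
Your first two steps are sound and essentially reproduce the paper's Part (1): comparing $\phi_k(V_k)$ against $\phi_k(\theta\,{\bf R}_{X_k}^{-1}(X^*))$ using convexity of $f\circ{\bf R}_{X_k}$, convexity of $h$ plus \eqref{eq6}, the bound $\|V\|_{\mathcal{B}_k}^2\leq\kappa_2\|V\|^2$, and \eqref{eq4_eta}; your optimization of $\alpha^*=\min\{1,(\eta-L_hM_2)/\kappa_2\}$ is exactly the paper's choice of $\theta_{\min}$, just performed up front and packaged as a PL inequality. The genuine gap is in your last step. Your one-step contraction $F({\bf R}_{X_k}(\alpha V_k))-F^*\leq(1-\alpha c_0)(F_k-F^*)$ requires $\alpha\leq\tilde\alpha=\min\{1,\kappa_1/(2c_2)\}$ so that the remainder $\alpha(c_2\alpha-\kappa_1/2)\|V_k\|^2$ is nonpositive, but the accepted stepsize is typically $\alpha_k=1>\tilde\alpha$ (the backtracking loop starts at $1$ and stops at the \emph{first} acceptable value, and near convergence the full step is almost always accepted). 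Your proposed dichotomy on $\alpha_k\lessgtr\tilde\alpha$ does not repair this: in the case $\alpha_k>\tilde\alpha$ the only tool you invoke is the nonmonotone Armijo inequality $F_{k+1}\leq F_{l(k)}-\tfrac{\sigma\alpha_k}{2}\|V_k\|_{\mathcal{B}_k}^2$, and the size of $\alpha_k$ carries no lower bound on $\|V_k\|$; since $\|V_k\|\to0$ while $F_{l(k)}-F^*$ may be much larger than $\|V_k\|^2$, this decrease can be an arbitrarily small fraction of the gap, so no uniform contraction factor $\nu<1$ results. The $m$-step buffer cannot ``absorb'' these indices because nothing prevents $\alpha_k>\tilde\alpha$ at \emph{every} iteration.

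The correct dichotomy — and the one the paper uses — is on $\|V_k\|^2$ versus $\omega(F_{l(k)}-F^*)$ for a small $\omega>0$, not on $\alpha_k$. When $\|V_k\|^2\geq\omega(F_{l(k)}-F^*)$, the Armijo decrease is a definite fraction of the gap and contracts. When $\|V_k\|^2<\omega(F_{l(k)}-F^*)$, you must \emph{keep} the quadratic remainder rather than try to make it nonpositive: your own display
\begin{equation*}
F_{k+1}-F^*\leq(1-\alpha_k c_0)(F_k-F^*)+\alpha_k\bigl(c_2\alpha_k-\kappa_1/2\bigr)\|V_k\|^2\leq(1-\alpha_k c_0)(F_{l(k)}-F^*)+c_2\|V_k\|^2
\end{equation*}
holds for the actual $\alpha_k\in(0,1]$, and the term $c_2\|V_k\|^2<c_2\omega(F_{l(k)}-F^*)$ is then absorbed into $\alpha_k c_0\geq\gamma\overline{\alpha}c_0$ by choosing $\omega$ small enough (this is precisely the role of the constraints $\omega<\kappa_2/(2\eta c_2)$ and $\omega<(\eta-L_hM_2)^2/(2\eta\kappa_2 c_2)$ in the paper). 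With that substitution your Part (3) block-recursion argument goes through; without it the proof does not close.
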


\begin{proof}
Let $\mathcal{U}_{X^*}$ be the neighbourhood of $X^*$ and $\epsilon>0$ be the constant such that all statements of Lemma \ref{lm43} hold.
By Theorem \ref{thm4_new}, there exists an integer $K>0$ such that $X_k\in\mathcal{U}_{X^*}$ for all $k\geq K$.
From $X_k\rightarrow X^*$, it follows that $\|{\bf R}_{X_k}^{-1}(X^*)\|\rightarrow0$.
Without loss of generality, assume that $\|{\bf R}_{X_k}^{-1}(X^*)\|<\epsilon$ for all $k\geq K$.

We separate our proof into three parts.

Part (1). By \eqref{lele}, we have
\begin{equation}\label{eq4_1}
F_{k+1} \leq f(X_k)+\langle\nabla f(X_k),\alpha_kV_k\rangle+ c_2\|\alpha_kV_k\|^2 +h(X_k+\alpha_kV_k),
\end{equation}
where $c_2:=\varrho M_2+\frac{1}{2}LM_1^2+L_hM_2$.
Since $h$ is convex and $\alpha_k\in(0,1]$, it holds that
\begin{eqnarray}
\nonumber h(X_k+\alpha_kV_k) &\leq& \alpha_kh(X_k+V_k) + (1-\alpha_k)h(X_k).
\end{eqnarray}
Combining it with \eqref{eq4_1} yields
\begin{equation}\label{eq4_2}
F_{k+1} \leq (1-\alpha_k)F_k+\alpha_k(f(X_k)+\phi_{k}(V_k))+ (c_2-\frac{\kappa_1}{2\alpha_k})\|\alpha_kV_k\|^2.
\end{equation}

From Lemma \ref{lm43}, we know that $f\circ{\bf R}_X$ is convex on the set $\{V\in{\rm T}_X\mathcal{M} : \|V\|<\epsilon\}$.
For $V\in{\rm T}_X\mathcal{M}$, it holds that ${\rm grad}(f\circ{\bf R}_X)(0_X)={\rm Proj}_{{\rm T}_{X}\mathcal{M}}\nabla f(X)$.
If $\|V\|<\epsilon$, then
\begin{eqnarray}\label{yuan}
f({\bf R}_X(V))-f(X) \geq\langle \nabla f(X) ,V\rangle.
\end{eqnarray}
By the definition of $\phi_{k}$ (see \eqref{eq3_subprob_func}) and $V_k=\arg\min_{V\in{\rm T}_{X_k}\mathcal{M}}\phi_k(V)$, we have
\begin{eqnarray}
\nonumber f(X_k)+\phi_{k}(V_k) &=& \min_{V\in T_{X_k}\mathcal{M}} \{ f(X_k)+\langle\nabla f(X_k), V\rangle+ \frac{1}{2}\|V\|_{\mathcal{B}_k}^2+h(X_k+V)\}.
\end{eqnarray}
By combining it with \eqref{yuan}, for all $k\geq K$ and $\theta\in[0,1]$, we have
\begin{eqnarray}
&&f(X_k)+\phi_{k}(V_k)\nonumber\\
&\leq& \min_{V\in T_{X_k}\mathcal{M},\|V\|<\epsilon} \{f({\bf R}_{X_k}(V))+ \frac{1}{2}\|V\|_{\mathcal{B}_k}^2+h(X_k+V)\}\nonumber\\
&\leq& \theta f(X^*)+(1-\theta) F_k+ \frac{1}{2}\theta^2\kappa_2\|{\bf R}_{X_k}^{-1}(X^*)\|^2 +\theta h(X_k+{\bf R}_{X_k}^{-1}(X^*))\nonumber\\
&\leq& \theta f(X^*)+(1-\theta) F_k+ \frac{1}{2}\theta^2\kappa_2\|{\bf R}_{X_k}^{-1}(X^*)\|^2 +\theta h(X^*)+\theta L_hM_2\|{\bf R}_{X_k}^{-1}(X^*)\|^2\nonumber\\
&\leq& \theta F(X^*)+(1-\theta) F_k+ (\frac{1}{2}\theta^2\kappa_2+ \theta L_hM_2 )\|{\bf R}_{X_k}^{-1}(X^*)\|^2.\label{qu}
\end{eqnarray}
Denote $\Upsilon:=(\frac{1}{2}\theta^2\kappa_2+ \theta L_hM_2 )\|{\bf R}_{X_k}^{-1}(X^*)\|^2$. From \eqref{qu} and \eqref{eq4_2}, it follows that
\begin{eqnarray}
&& F_{k+1} \nonumber\\
&\leq& (1-\alpha_k)F_{k}+\alpha_k\big(\theta F(X^*)+(1-\theta) F_{k}+
\Upsilon\big) + (c_2-\frac{\kappa_1}{2\alpha_k})\|\alpha_kV_k\|^2\nonumber\\
&\leq& F_{l(k)}+\alpha_k\big( \theta(F(X^*)-F_{l(k)})+ \Upsilon\big)+c_2\|\alpha_kV_k\|^2.\label{eq4_3}
\end{eqnarray}
By \eqref{eq4_eta}, it holds that
\begin{eqnarray}\label{zheng}
F_{l(k)}-F(X^*)\geq F_{k}-F(X^*)\geq\eta\|{\bf R}_{X_k}^{-1}(X^*)\|^2.
\end{eqnarray}
Since $\alpha_k\leq1$, by \eqref{zheng} and \eqref{eq4_3}, we can deduce that
\begin{equation}\label{lei}
F_{k+1} \leq
F_{l(k)}+
\alpha_k
\left[
\left(F_{l(k)}-F(X^*)\right)(\frac{1}{2\eta}\kappa_2\theta^2-(1-\frac{1}{\eta}L_hM_2)\theta)
+c_2\|V_k\|^2
\right].
\end{equation}

Part (2). Now we prove that there exists a $\nu\in(0,1)$ such that for all $k>K$,
\begin{equation}\label{eq4_5}
F_{k+1}-F(X^*)\leq\nu(F_{l(k)}-F(X^*)).
\end{equation}
Let $\omega$ be a positive real number such that
\[
\omega<\min\{
\frac{2}{\sigma\gamma\overline{\alpha}},
\frac{\kappa_2}{2\eta c_2},
\frac{(\eta-L_hM_2)^2}{2\eta \kappa_2c_2} \}.
\]
Next, we consider two cases of the value of $\|V_k\|^2$.

\begin{enumerate}
\item[\rm(i)] $\|V_k\|^2\geq\omega(F_{l(k)}-F(X^*))$.
By \eqref{eq34_2}, we have
\[
\frac{2}{\sigma\gamma\overline{\alpha}}(F_{l(k)}-F_{k+1})
\geq\frac{2}{\sigma\alpha_k}(F_{l(k)}-F_{k+1})
\geq\|V_k\|^2\geq\omega (F_{l(k)}-F(X^*)).
\]
Thus, 
\[
F_{k+1}-F(X^*)\leq(1-\frac{\sigma\gamma\overline{\alpha}\omega}{2})(F_{l(k)}-F(X^*)),
\]
which implies \eqref{eq4_5}.
\item[\rm(ii)] $\|V_k\|^2<\omega(F_{l(k)}-F(X^*))$. Combining it with \eqref{lei} yields
\[
F_{k+1}
\leq
F_{l(k)}
+\alpha_k
\left(F_{l(k)}-F(X^*\right)(\frac{1}{2\eta}\kappa_2\theta^2-(1-\frac{1}{\eta}L_hM_2)\theta +c_2\omega).
\]
Denote $r_{k+1}:= F_{k+1}-F(X^*)$. Then, we have
\begin{equation}
\label{eq4_7}
r_{k+1}
\leq
\left[1+\alpha_k\left(\frac{1}{2\eta}\kappa_2\theta^2-(1-\frac{1}{\eta}L_hM_2)\theta+c_2\omega\right)\right]
\cdot r_{l(k)}.
\end{equation}
Define
$q(\theta)
:=
\frac{1}{2\eta}\kappa_2\theta^2-(1-\frac{1}{\eta}L_hM_2)\theta+c_2\omega$.
Let $\theta_{\min}:=\mathop{\arg\min}\limits_{0\leq\theta\leq1}q(\theta)$. Then
\[
\theta_{\min}=\min\{1,(\eta-L_hM_2)/\kappa_2\}.
\]

Consider the following two cases:
\begin{enumerate}
\item[\rm(a)] If $\theta_{\min}=1$, then
\[
\frac{1}{2\eta}\kappa_2\leq \frac{1}{2}(1-\frac{1}{\eta}L_hM_2),
\]
which together with $\omega<\kappa_2/(2\eta c_2)$ implies
\[
q(\theta_{\min})
=
\frac{1}{2\eta}\kappa_2-(1-\frac{1}{\eta}L_hM_2)+c_2\omega
\leq
-\frac{1}{2\eta}\kappa_2+c_2\omega
<0.
\]

\item[\rm(b)] Otherwise, $\theta_{\min}=(\eta-L_hM_2)/\kappa_2<1$.
By $\omega<(\eta-L_hM_2)^2/(2\eta \kappa_2c_2)$, we have
\[
q(\theta_{\min})
=
-\frac{1}{2\eta \kappa_2}(\eta-L_hM_2)^2+c_2\omega
<0.
\]
\end{enumerate}
In either case $\rm(a)$ or $\rm(b)$, substituting $\theta_{\min}$ into \eqref{eq4_7},  we can see that \eqref{eq4_5} holds.
\end{enumerate}

Part (3). For any $k> K$, there exists an integer $i\geq 1$ such that $(i-1)m<k-K\leq im$ where $m$ is the memory size parameter of the nonmonotone line search in \eqref{nonmonotone}.
By using \eqref{eq4_5} recursively, we have
\begin{eqnarray}
\nonumber r_k &\leq&r_{l(k)} \leq \nu r_{l(l(k)-1)}\leq \nu r_{l(k-m)}\leq\dots\\
\nonumber &\leq&\nu^{i-1}r_{l(k-(i-1)m)}\leq\nu^{i-1}r_{l(K)}\leq\nu^{(k-K)/m-1}r_{l(K)}.
\end{eqnarray}

Then \eqref{huo} follows from the above inequality by taking
\[
\mu:=\frac{1}{\nu}, \ \tau:=\nu^{1/m}.
\]
The proof is complete.
\end{proof}

\begin{corollary}\label{cor41}
Suppose the same assumptions hold as in Theorem \ref{thm42}.
Then there exists an integer $K$ and a constant $C_K>0$ such that
\begin{equation}\label{xi}
\|X_k - X^*\| \leq C_K\sqrt{\tau}^{k}, \ {\rm for \ all}\ k> K,
\end{equation}
where $\tau\in(0,1)$ is as in Theorem \ref{thm42}.
\end{corollary}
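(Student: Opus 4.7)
The plan is to chain together Theorem \ref{thm42} with the quadratic growth estimate \eqref{eq4_eta} from Lemma \ref{lm43}(2), and then pass from the retraction-based displacement $\|{\bf R}_{X_k}^{-1}(X^*)\|$ to the Euclidean distance $\|X_k-X^*\|$ via the two-sided bound \eqref{eq4_hw}/\eqref{qin}. Concretely, Theorem \ref{thm42} gives geometric decay of the function gap $F(X_k)-F(X^*)$ at rate $\tau^{k-K}$, while Lemma \ref{lm43}(2) says that near $X^*$ this gap dominates $\eta\|{\bf R}_{X_k}^{-1}(X^*)\|^2$; combining them immediately yields decay of $\|{\bf R}_{X_k}^{-1}(X^*)\|$ at rate $\sqrt{\tau}^{\,k-K}$. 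The final step converts this to $\|X_k-X^*\|$.

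First, I would fix the neighbourhood $\mathcal{U}_{X^*}$ and the integer $K$ produced in the proof of Theorem \ref{thm42}, chosen large enough that for all $k\ge K$ we have $X_k\in\mathcal{U}_{X^*}$, $\|{\bf R}_{X_k}^{-1}(X^*)\|<\epsilon$, and $\|{\bf R}_{X_k}^{-1}(X^*)\|<\varepsilon'$ so that \eqref{qin} applies. This is possible by Theorem \ref{thm4_new} together with the fact that $X_k\to X^*$ forces $\|{\bf R}_{X_k}^{-1}(X^*)\|\to 0$.

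Next, for $k>K$, I would apply Lemma \ref{lm43}(2) to get
\[
\eta\,\|{\bf R}_{X_k}^{-1}(X^*)\|^2 \;\le\; F(X_k)-F(X^*),
\]
then invoke Theorem \ref{thm42} to bound the right-hand side by $\mu\tau^{k-K}\bigl(F(X_{l(K)})-F(X^*)\bigr)$, giving
\[
\|{\bf R}_{X_k}^{-1}(X^*)\|^2 \;\le\; \frac{\mu}{\eta}\bigl(F(X_{l(K)})-F(X^*)\bigr)\,\tau^{k-K}.
\]
Taking square roots produces a bound of the form $\tilde C_K\,\sqrt{\tau}^{\,k-K}$. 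Finally, I would apply the right inequality in \eqref{qin} (specialized to $V=0_{X_k}$, $W={\bf R}_{X_k}^{-1}(X^*)$ in \eqref{eq4_hw}, together with the fact that on the compact Stiefel manifold the Riemannian distance and the Euclidean norm $\|\cdot\|$ are equivalent near $X^*$) to pass to $\|X_k-X^*\|$. Absorbing the factor $\sqrt{\tau}^{-K}$ and all other constants into $C_K$ yields \eqref{xi}.

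The only mildly delicate point is the last step, converting $\|{\bf R}_{X_k}^{-1}(X^*)\|$ into the Euclidean distance $\|X_k-X^*\|$: the estimate \eqref{eq4_hw} is stated in terms of the Riemannian distance on $\mathcal{M}$, whereas \eqref{xi} uses the ambient Euclidean norm. This is routine because $\mathcal{M}={\rm St}(n,r)$ is compactly embedded, so Riemannian and Euclidean distances are comparable on $\mathcal{U}_{X^*}$; alternatively, one can shrink $\mathcal{U}_{X^*}$ further and invoke the local Lipschitzness of ${\bf R}_{X^*}$ to obtain $\|X_k-X^*\|\le C\|{\bf R}_{X_k}^{-1}(X^*)\|$ directly. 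With that comparison in hand, the corollary follows with $C_K := C\sqrt{\mu/\eta}\sqrt{F(X_{l(K)})-F(X^*)}\,\sqrt{\tau}^{\,-K}$.
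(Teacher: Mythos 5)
Your proof is correct and takes essentially the same route as the paper: both combine the geometric decay of the function gap from Theorem \ref{thm42} with a quadratic growth inequality established in Lemma \ref{lm43}, then convert the norm of a retraction inverse into the Euclidean distance. The only cosmetic difference is that the paper works with $\|{\bf R}_{X^*}^{-1}(X_k)\|$ via \eqref{eq4_strong_F} so that \eqref{eq5} at the base point $X^*$ gives $\|X_k-X^*\|\leq M_1\|{\bf R}_{X^*}^{-1}(X_k)\|$ immediately, whereas you work with $\|{\bf R}_{X_k}^{-1}(X^*)\|$ via \eqref{eq4_eta} and therefore need the extra (but valid) conversion step at the end, which could equally be handled by applying \eqref{eq5} at the base point $X_k$.
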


\begin{proof}
By \eqref{eq5}, \eqref{eq4_strong_F} and \eqref{huo}, for any $k\geq K$, we have
\begin{eqnarray}
\nonumber \|X_k - X^*\| &\leq& M_1\|{\bf R}_{X^*}^{-1}(X_k)\|\\
\nonumber &\leq& 2M_1\big(\frac{1}{\widetilde{\eta}}(F_k-F^*)\big)^{1/2}\\
\nonumber &\leq&  2M_1\big(\frac{1}{\widetilde{\eta}}\mu\tau^{-K}(F_{l(K)}-F^*)\big)^{1/2}\sqrt{\tau}^{k}.
\end{eqnarray}
Taking $C_K:=2M_1\big(\mu\tau^{-K}(F_{l(K)}-F^*)/\widetilde{\eta}\big)^{1/2}$ in the above inequality yields \eqref{xi}.
\end{proof}

\begin{remark}
The constant $\tau$ in \eqref{xi} only depends on $f$ and $h$,
while $K$ and $C_K$ depend on more factors.
Different initial point $X_0$ and stepsize $\alpha_k$ may lead to different $K$ and $C_K$.
\end{remark}

\section{Numerical Experiments}\label{sec_num_exp}

In this section, we report our numerical experiments
comparing our method with Riemannian proximal gradient methods, including ManPG and ManPG-Ada in \cite{mashiqian2020}, and NLS-ManPG which equips ManPG with the nonmonotone line search strategy.
Our objective is to show the efficiency of the proximal quasi-Newton method for composite optimization problems over the Stiefel Manifold.

Our test problems include the compressed modes (CM) problem, sparse principle component analysis (Sparse PCA), 
and the joint diagonalization problem with a regularization term. 
All of these experiments\footnote{Our MATLAB code is available at https://github.com/QinsiWang2022/ManPQN.} were conducted in MATLAB R2018b on a PC using Windows 10 (64bit) system with Intel Core i5 CPU (2.3GHz) and 8GB memory.

For the stopping criterion, we terminate our algorithm when $\|V_k\|^2\leq10^{-8}nr$, where $V_k\in\mathbb{R}^{n\times r}$ is defined by \eqref{duoduo},
or the algorithm reaches the maximum iteration number 30000.
For other parameters, the maximum iteration number of the inner loop is set to be $100$.
In the implementation of ManPQN, we set $m=10$ and $p=5$ (for $m$ and $p$, see \eqref{nonmonotone} and \eqref{eq3_lbfgs_hk}).
The parameters used in ManPG, ManPG-Ada and NLS-ManPG are set to be the default values in \cite{mashiqian2020}.
For all the problems, we use the singular value decomposition (SVD) as the retraction mapping in ManPQN, ManPG, ManPG-Ada and NLS-ManPG.

We report the numerical results obtained by solving randomly generated instances.
Specifically, we randomly generate 50 instances and record the averaged numerical performance of
these instances.
Numerical results are shown in several figures and tables.
In each figure, \textbf{CPU} denotes the CPU time in seconds,
\textbf{Iter} represents the number of iterations, \textbf{Sparsity} denotes the percentage of zeros in the local minimum $X^*$.
In each table, the total number of line search steps and the averaged iteration number of the adaptive regularized semismooth Newton (ASSN) method are reported.

\subsection{CM Problem}\label{sec51}

The compressed modes (CM) problem aims to find sparse solutions of systems of equations in physics,
including the Schr\"odinger equation in quantum mechanics.
The CM problem can be written as
\begin{equation}\label{eq5_1}
\min_{X\in\mathcal{M}}\mathrm{tr}(X^THX)+\mu\|X\|_1,
\end{equation}
where $H$ is the discretized Schr\"odinger operator.
For details of the CM problem, the reader is referred to \cite{cm2013}.

We can observe from Figures \ref{fig51_1}-\ref{fig51_3} and Tables \ref{tb51_1}-\ref{tb51_3} that the ManPQN method outperforms ManPG, ManPG-Ada and NLS-ManPG, which demonstrates the efficiency of the quasi-Newton strategy used in ManPQN.
ManPQN requires less computational time and less iterations 
than ManPG related methods, especially when $n$ and $r$ are large.
From these results, we can see that the quasi-Newton technique can accelerate the proximal gradient method for composite optimization problems over the Stiefel manifold.
ManPG related methods can achieve a solution with slightly better sparsity than ManPQN.
The reason for this is that we use an
approximate quasi-Newton strategy in our method (see \eqref{duoduo} and \eqref{hanwu}).

\begin{figure}[H]
	\centering  
	\subfigbottomskip=2pt 
	\subfigcapskip=-5pt 
	\subfigure[CPU]{
		\includegraphics[width=0.5\linewidth]{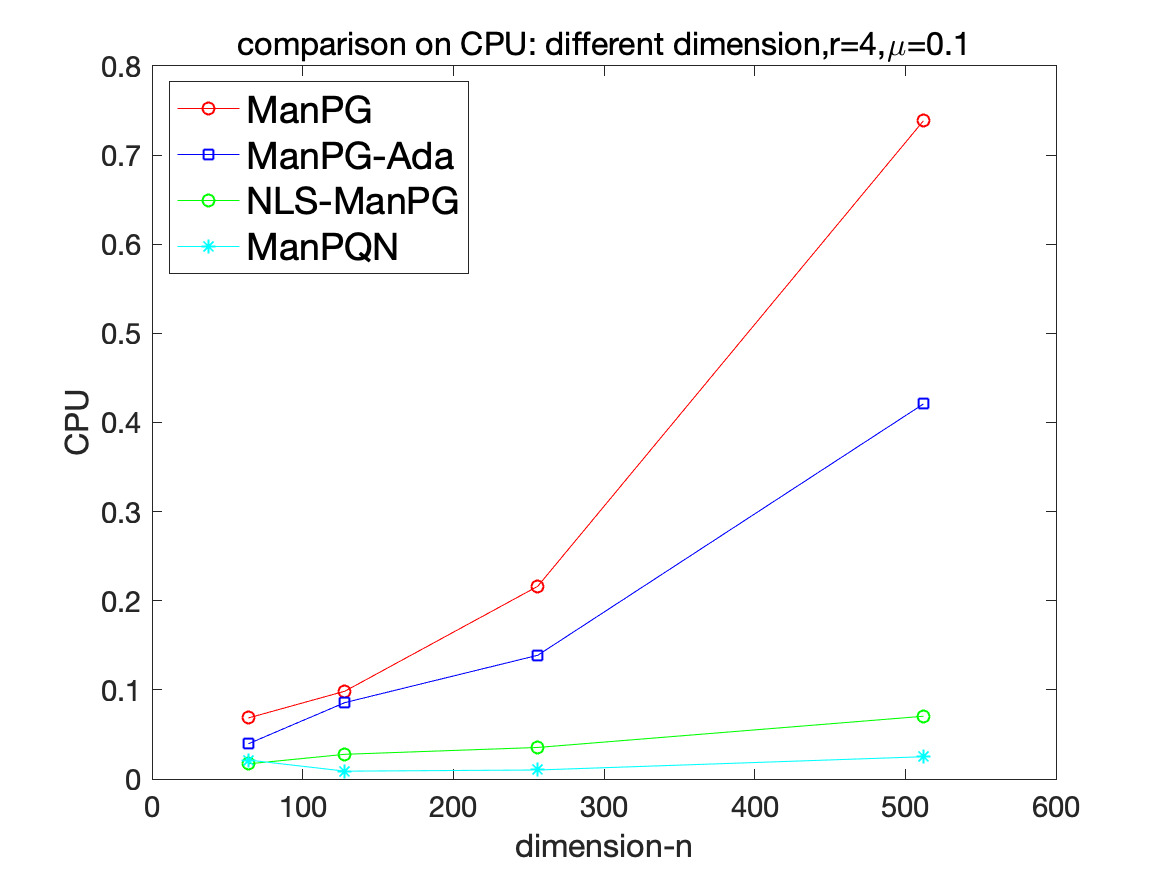}}
		\hspace{-5mm}
	\subfigure[Iter]{
		\includegraphics[width=0.5\linewidth]{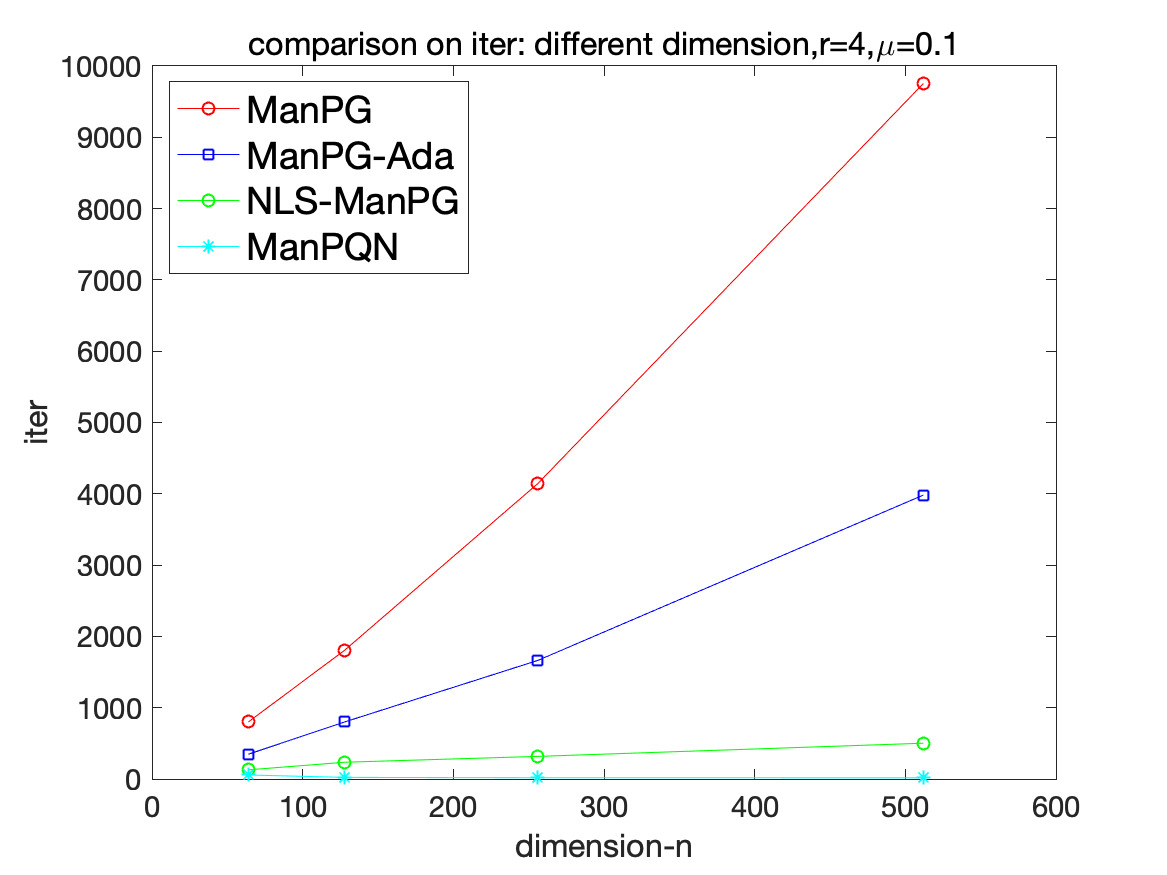}}
	\caption{Comparison on CM problem, different $n=\{64,128,256,512\}$ with $r= 4$ and $\mu=0.1$.}\label{fig51_1}
\end{figure}

\begin{figure}[H]
	\centering  
	\subfigbottomskip=2pt 
	\subfigcapskip=-5pt 
	\subfigure[CPU]{
		\includegraphics[width=0.5\linewidth]{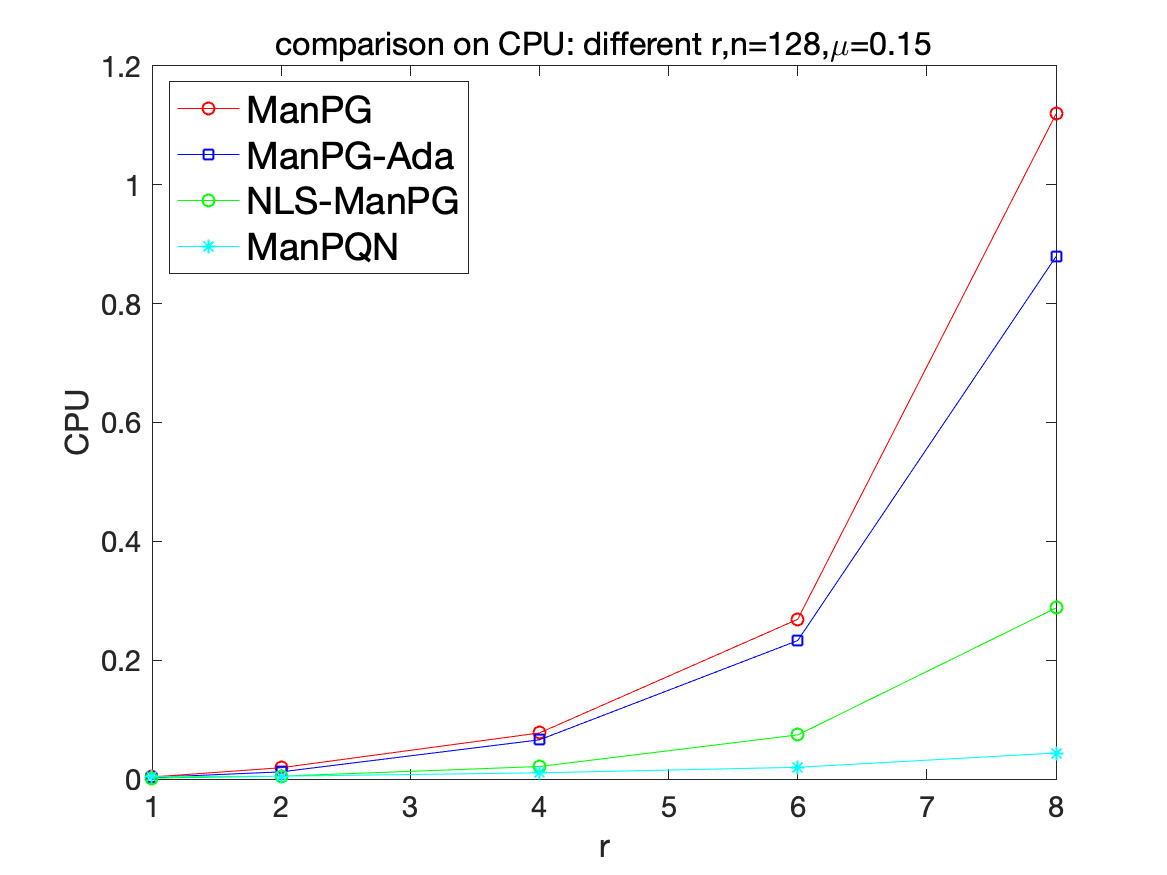}}
		\hspace{-5mm}
	\subfigure[Iter]{
		\includegraphics[width=0.5\linewidth]{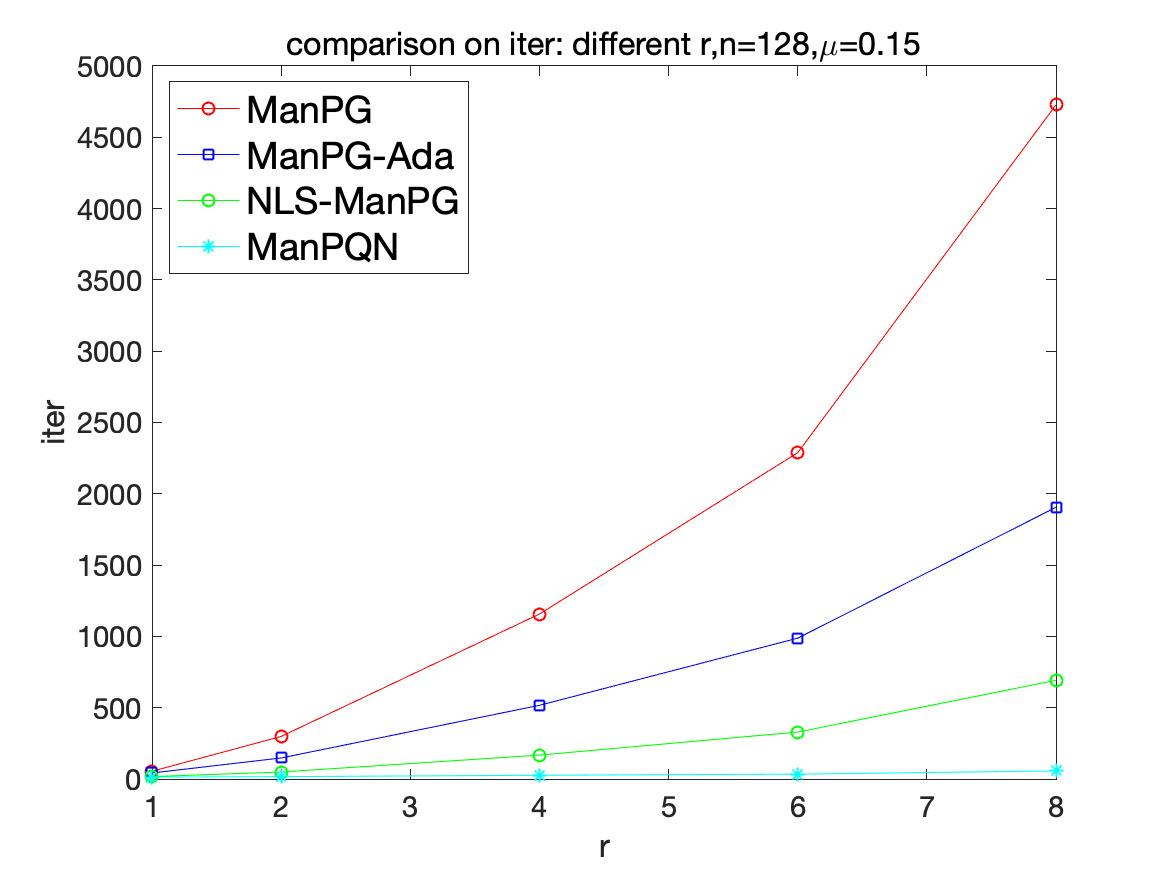}}
	\caption{Comparison on CM problem, different $r=\{1,2,4,6,8\}$ with $n= 128$ and $\mu=0.15$.}\label{fig51_2}
\end{figure}

\begin{figure}[H]
	\centering  
	\subfigbottomskip=2pt 
	\subfigcapskip=-5pt 
	\subfigure[CPU]{
		\includegraphics[width=0.5\linewidth]{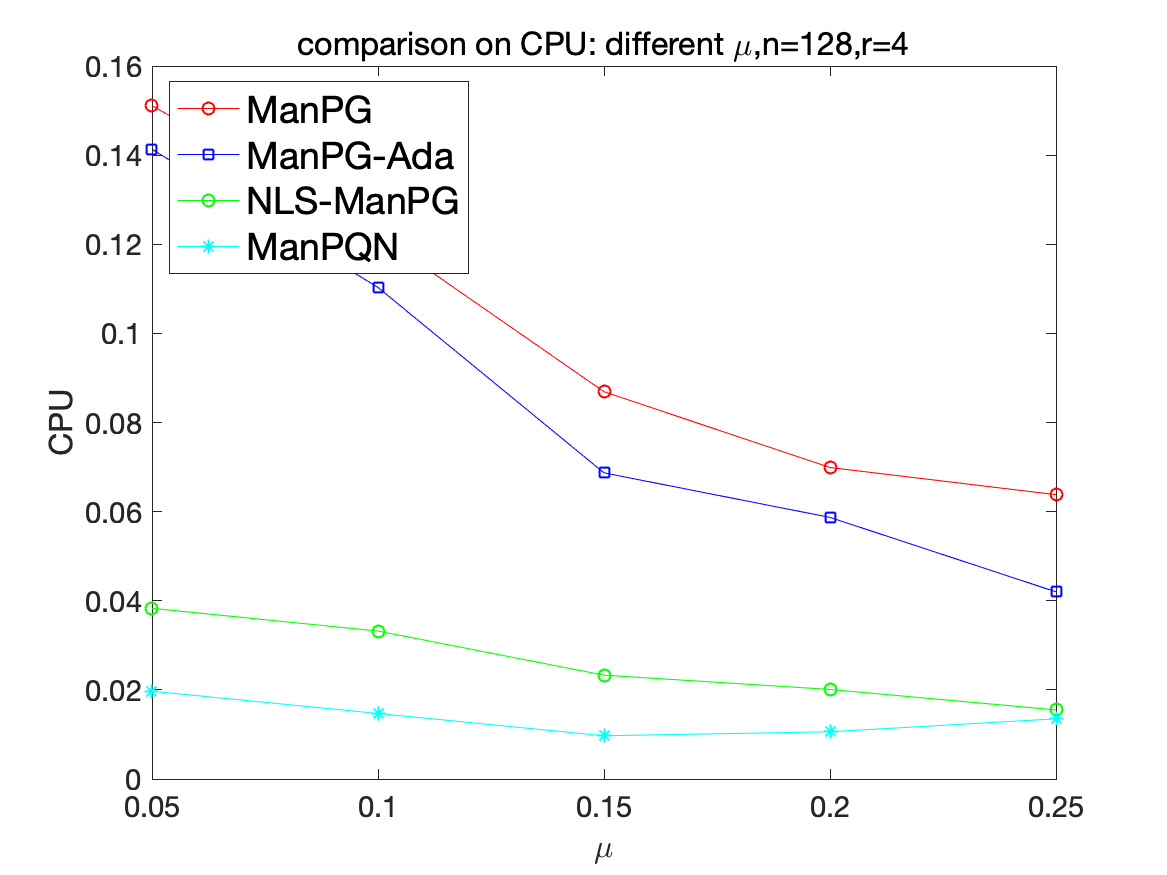}}
		\hspace{-5mm}
	\subfigure[Iter]{
		\includegraphics[width=0.5\linewidth]{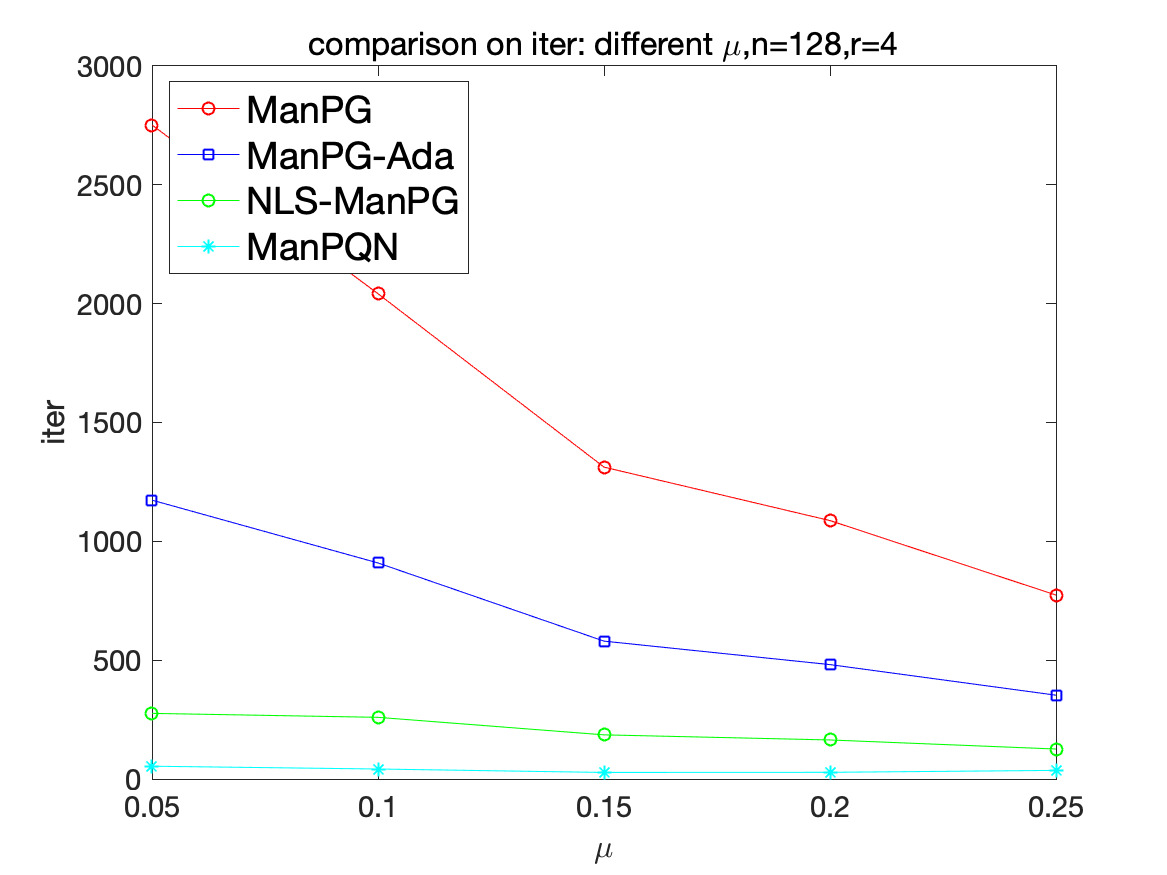}}
	\caption{Comparison on CM problem, different $\mu=\{0.05,0.10,0.15,0.20,0.25\}$ with $n= 128$ and $r=4$.}\label{fig51_3}
\end{figure}

\begin{table}[htbp]\centering
\caption{Comparison on CM problem, different $n=\{64,128,256,512\}$ with $r= 4$ and $\mu=0.1$.}\label{tb51_1}
\begin{tabular}{ccccccc}
\midrule
$n=64$ & Iter & $F(X^*)$  & sparsity & CPU time & \# line-search &  SSN iters  \\
\midrule
ManPG       &   800.74  & 1.424  &    0.82  &   0.0685   &   130.18   &   0.97  \\ 
ManPG-Ada  &   347.80  & 1.424  &    0.82  &   0.0396   &   130.72   &   1.26  \\ 
NLS-ManPG   &   128.74   & 1.424  &    0.82  &   0.0170   &   0.16   &   1.57  \\ 
ManPQN&   56.32    & 1.432  &    0.80  &   0.0213   &   116.20   &   5.91  \\ 
\midrule
$n=128$ &   \\
\midrule
ManPG       &   1808.54  & 1.885  &    0.83  &   0.0985   &   86.98   &   0.53  \\ 
ManPG-Ada  &   801.16  & 1.885  &    0.83  &   0.0857   &   566.30   &   1.07  \\ 
NLS-ManPG   &   235.20   & 1.885  &    0.83  &   0.0277   &   10.12   &   1.47  \\ 
ManPQN&   22.52    & 1.890  &    0.81  &   0.0088   &   18.76   &   4.44  \\ 
\midrule
$n=256$ &   \\
\midrule
ManPG       &   4141.94  & 2.489  &    0.85  &   0.2162   &   72.14   &   0.45  \\ 
ManPG-Ada  &   1662.62  & 2.489  &    0.85  &   0.1388   &   1161.84   &   0.62  \\ 
NLS-ManPG   &   317.06   & 2.489 &    0.85  &   0.0354   &   19.06   &   1.33  \\ 
ManPQN&   17.60    & 2.497  &    0.84  &   0.0101   &   19.00   &   4.33  \\ 
\midrule
$n=512$ &   \\
\midrule
ManPG       &   9755.60  & 3.286  &    0.87  &   0.7385   &   50.66   &   0.16  \\ 
ManPG-Ada  &   3983.06  & 3.286  &    0.87  &   0.4208   &   2623.98   &   0.25  \\ 
NLS-ManPG   &   501.92   & 3.286  &    0.87  &   0.0704   &   25.06   &   0.92  \\ 
ManPQN&   16.54    & 3.293  &    0.86  &   0.0250   &   15.58   &   3.28  \\ 
\midrule
\end{tabular}
\end{table}

The total number of line-search steps and the averaged iteration number of the ASSN method are shown in Tables \ref{tb51_1}-\ref{tb51_3}.
ManPQN and NLS-ManPG need less line-search steps than the other methods since they use the nonmonotone line search technique.
From Tables \ref{tb51_1}-\ref{tb51_3}, we can see that the ASSN method in ManPQN needs more iterations than that of the other three methods.
The reason for this is that the matrix $({\rm diag}B_k)^{-1})$ is involved in $\mathcal{G}(\overline{\rm vec}(\Lambda_l))$ and therefore the condition number of $\mathcal{G}(\overline{\rm vec}(\Lambda_l))$ becomes larger (see \eqref{eq3_jac} and \eqref{biao}).

\begin{table}[htbp]\centering
\caption{Comparison on CM problem, different $r=\{1,2,4,6,8\}$ with $n= 128$ and $\mu=0.15$.}\label{tb51_2}
\begin{tabular}{ccccccc}
\midrule
$r=1$ & Iter & $F(X^*)$  & sparsity & CPU time & \# line-search &  SSN iters  \\
\midrule
ManPG       &   54.12  & 0.6513 &    0.87  &   0.0034   &   0.00   &   0.93  \\ 
ManPG-Ada  &   42.20  & 0.6513  &    0.87  &   0.0027   &   0.00   &   1.04  \\ 
NLS-ManPG   &   16.94   & 0.6513  &    0.87  &   0.0015   &   0.08   &   1.12  \\ 
ManPQN&   12.78    & 0.6603  &    0.86  &   0.0039   &   4.94   &   2.29  \\
\midrule
$r=2$ &   \\
\midrule
ManPG       &   298.62  & 1.302  &    0.86  &   0.0190   &   15.04   &   0.91  \\ 
ManPG-Ada  &   147.52  & 1.302  &    0.86  &   0.0120   &   94.92   &   1.06  \\ 
NLS-ManPG   &   49.04   & 1.302  &    0.86  &   0.0048   &   1.46   &   1.24  \\ 
ManPQN&   16.58    & 1.303  &    0.86  &   0.0047   &   5.24   &   2.71  \\ 
\midrule
$r=4$ &   \\
\midrule
ManPG       &   1156.72  & 2.605  &    0.86  &   0.0775   &   95.56   &   0.82  \\ 
ManPG-Ada  &   516.68  & 2.605  &    0.86  &   0.0659   &   371.62   &   1.25  \\ 
NLS-ManPG   &   167.60   & 2.605  &    0.86  &   0.0210   &   5.56   &   1.65  \\ 
ManPQN&   26.26    & 2.610  &    0.85  &   0.0104   &   16.86   &   4.33  \\
\midrule
$r=6$ &   \\
\midrule
ManPG       &   2287.98  & 3.909  &    0.85  &   0.2688   &   163.14   &   0.60  \\ 
ManPG-Ada  &   987.78  & 3.909  &    0.85  &   0.2330   &   709.94   &   1.57  \\ 
NLS-ManPG   &   329.22   & 3.909  &    0.85  &   0.0742   &   8.98   &   1.79  \\ 
ManPQN&   34.08    & 3.920  &    0.84  &   0.0197   &   29.38   &   4.32  \\ 
\midrule
$r=8$ &   \\
\midrule
ManPG       &   4727.60  & 5.214  &    0.84  &   1.1193   &   220.42   &   1.24  \\ 
ManPG-Ada  &   1905.44  & 5.214  &    0.84  &   0.8793   &   1265.02   &   2.82  \\ 
NLS-ManPG   &   692.14   & 5.214  &    0.84  &   0.2878   &   15.86   &   2.87  \\ 
ManPQN&   56.56    & 5.248  &    0.82  &   0.0437   &   51.76   &   5.96  \\ 
\midrule
\end{tabular}
\end{table}

\begin{table}[htbp]\centering
\caption{Comparison on CM problem, different $\mu=\{0.05,0.10,0.15,0.20,0.25\}$ with $n= 128$ and $r=4$.}\label{tb51_3}
\begin{tabular}{ccccccc}
\midrule
$\mu=0.05$ & Iter & $F(X^*)$  & sparsity & CPU time & \# line-search &  SSN iters  \\
\midrule
ManPG       &   2751.68  & 1.083  &    0.76  &   0.1511   &   57.08   &   0.28  \\ 
ManPG-Ada  &   1173.30  & 1.083  &    0.76  &   0.1413   &   710.14   &   0.85  \\ 
NLS-ManPG   &   276.12   & 1.083  &    0.76  &   0.0383   &   14.76   &   1.37  \\ 
ManPQN&   53.78    & 1.111  &    0.73  &   0.0197   &   92.40   &   5.35  \\ 
\midrule
$\mu=0.10$ &   \\
\midrule
ManPG       &   2041.74  & 1.885  &    0.83  &   0.1221   &   79.06   &   0.52  \\ 
ManPG-Ada  &   909.02  & 1.885  &    0.83  &   0.1103   &   703.42   &   1.10  \\ 
NLS-ManPG   &   259.48   & 1.885  &    0.83  &   0.0332   &   11.30   &   1.47  \\ 
ManPQN&   42.04    & 1.902  &    0.81  &   0.0147   &   37.10   &   4.27  \\ 
\midrule
$\mu=0.15$ &   \\
\midrule
ManPG       &   1312.30  & 2.605  &    0.86  &   0.0869   &   130.72   &   0.77  \\ 
ManPG-Ada  &   580.08  & 2.605  &    0.86  &   0.0687   &   424.32   &   1.28  \\ 
NLS-ManPG   &   185.76   & 2.605  &    0.86  &   0.0233   &   5.74   &   1.63  \\ 
ManPQN&   27.54    & 2.610  &    0.85  &   0.0097   &   18.36   &   4.15  \\
\midrule
$\mu=0.20$ &   \\
\midrule
ManPG       &   1087.28  & 3.278  &    0.88  &   0.0699   &   90.14   &   0.88  \\ 
ManPG-Ada  &   481.32  & 3.278  &    0.88  &   0.0587   &   365.34   &   1.35  \\ 
NLS-ManPG   &   164.26   & 3.278  &    0.88  &   0.0201   &   3.88   &   1.80  \\ 
ManPQN&   28.12    & 3.280  &    0.87  &   0.0106   &   23.40   &   4.91  \\ 
\midrule
$\mu=0.25$ &   \\
\midrule
ManPG       &   773.86  & 3.916  &    0.89  &   0.0638   &   86.08   &   0.55  \\ 
ManPG-Ada  &   352.60  & 3.916  &    0.89  &   0.0420   &   211.18   &   1.00  \\ 
NLS-ManPG   &   126.10   & 3.916  &    0.89  &   0.0155   &   1.70   &   1.22  \\ 
ManPQN&   36.24    & 3.920  &    0.88  &   0.0135   &   21.82   &   3.34  \\ 
\midrule
\end{tabular}
\end{table}

\subsection{Sparse PCA}\label{sec52}

The sparse PCA model can be formulated as
\begin{equation}\label{eq5_2}
\min_{X\in\mathcal{M}}-\mathrm{tr}(X^TA^TAX)+\mu\|X\|_1,
\end{equation}
where $A\in\mathbb{R}^{m\times n}$.
We apply ManPQN and ManPG related algorithms to \eqref{eq5_2} and compare their performance.
The matrix $A^TA$ is generated by the following two ways.
In Section \ref{sec52_1}, $A$ is generated by normal distribution.
In Section \ref{sec52_2}, the matrix $A$ is chosen from real matrices in ``UF Sparse Matrix Collection'' \cite{ufmat_web}.

\subsubsection{Random generated sparse PCA problem}\label{sec52_1}

From Figures \ref{fig52_1}-\ref{fig52_3} and Tables \ref{tb52_1}-\ref{tb52_3}, 
we can see that ManPQN shows better performance than ManPG, ManPG-Ada and NLS-ManPG in terms of CPU time and iteration number, especially when $n$ and $r$ are large. 
In some cases, ManPG related methods can achieve a solution, the sparsity of which is a little better than that of ManPQN.
Taking CPU time, the total number of line search steps and the averaged SSN iteration number in Tables \ref{tb52_1}-\ref{tb52_3} into account, we can deduce that the quasi-Newton strategy can have a significant effect on accelerating our method.

Next, we investigate the convergence of $F(X_k)$ generated by ManPQN, ManPG and ManPG-Ada. We select six cases with different $n$ and $r$, and plot numerical results of these three algorithms in Figures \ref{fig52_4}-\ref{fig52_6}. We can see that when $X_k$ is close to $X^*$, $F(X_k)$ converges to $F(X^*)$ approximately at a linear rate, which matches our theoretical results in Theorem \ref{thm42}.

\begin{figure}[H]
	\centering  
	\subfigbottomskip=2pt 
	\subfigcapskip=-5pt 
	\subfigure[CPU]{
		\includegraphics[width=0.33\linewidth]{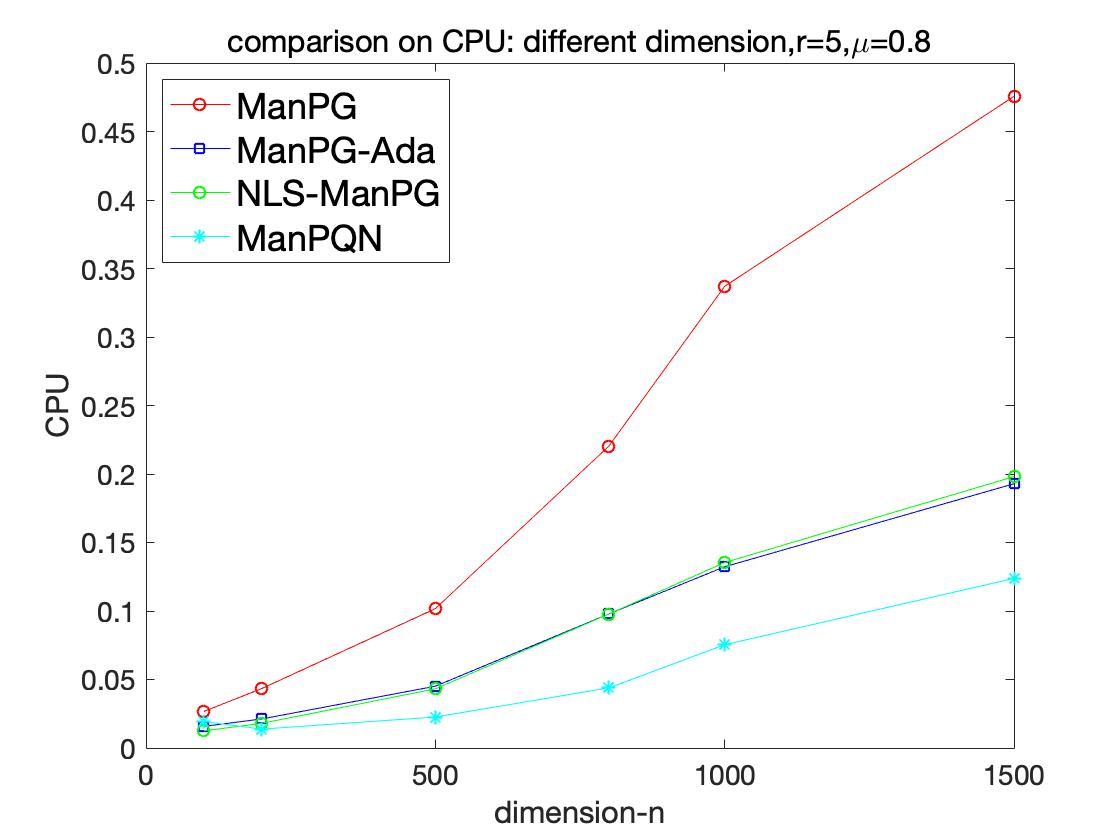}}
		\hspace{-5mm}
	\subfigure[Iter]{
		\includegraphics[width=0.33\linewidth]{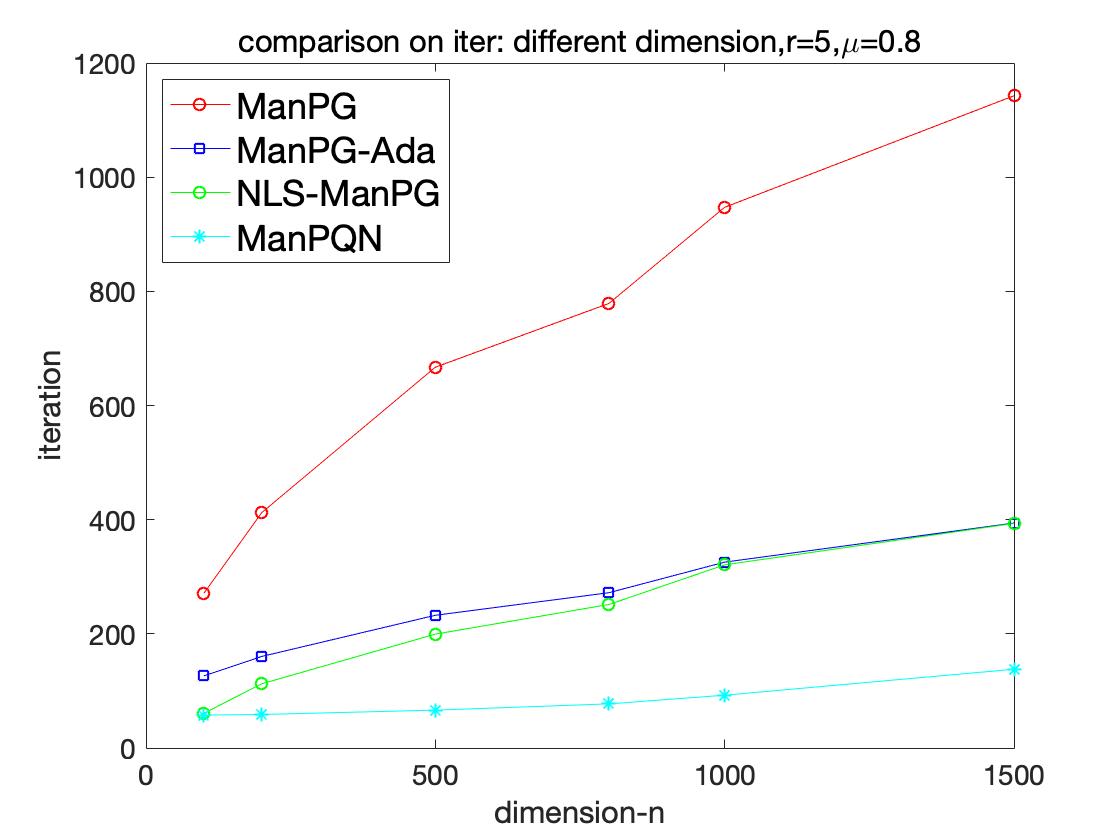}}
		\hspace{-5mm}
	\subfigure[Sparsity]{
		\includegraphics[width=0.33\linewidth]{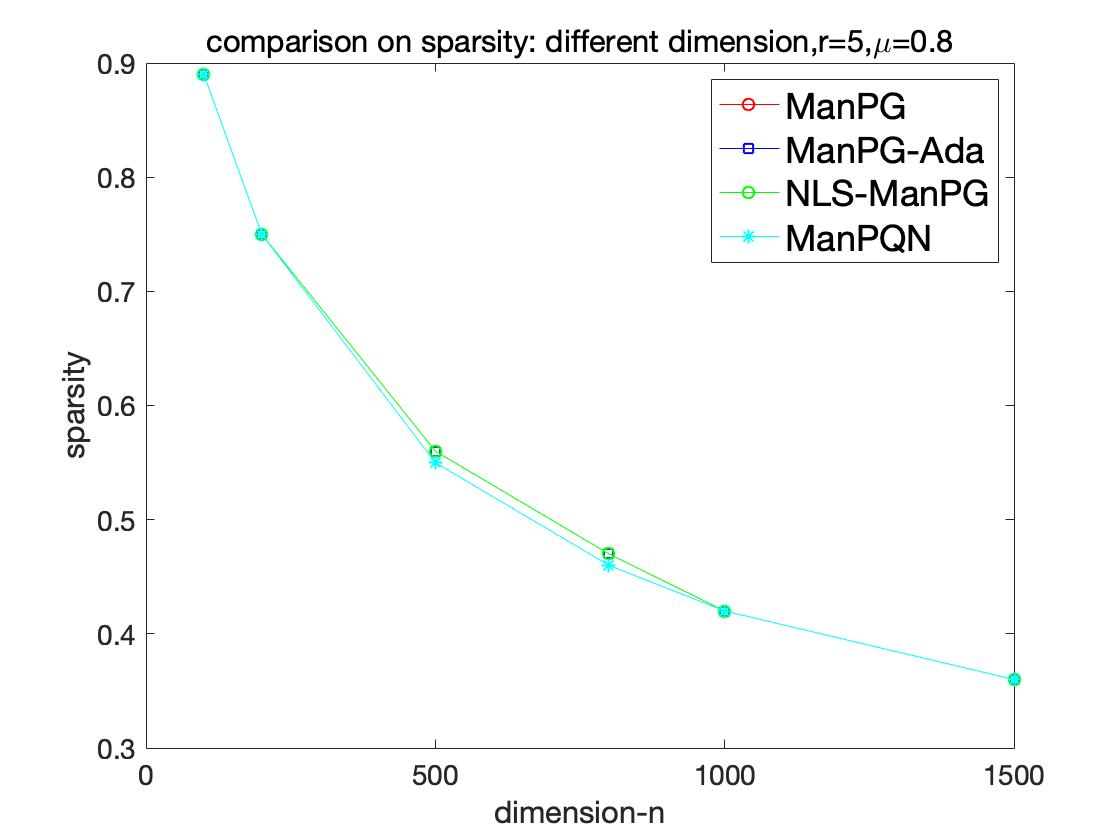}}
	\caption{Comparison on Sparse PCA problem, different $n=\{100,200, 500, 800, 1000, 1500\}$ with $r= 5$ and $\mu=0.8$.}\label{fig52_1}
\end{figure}

\begin{figure}[H]
	\centering  
	\subfigbottomskip=2pt 
	\subfigcapskip=-5pt 
	\subfigure[CPU]{
		\includegraphics[width=0.34\linewidth]{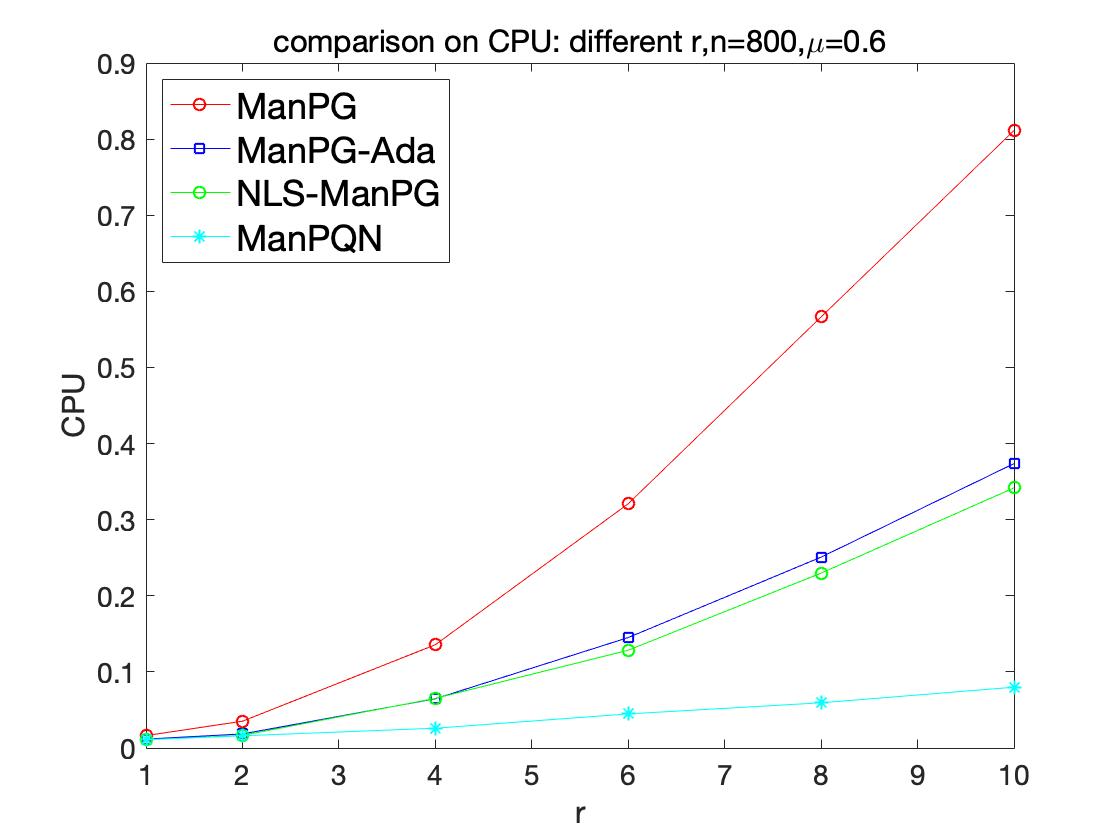}}
		\hspace{-5mm}
	\subfigure[Iter]{
		\includegraphics[width=0.34\linewidth]{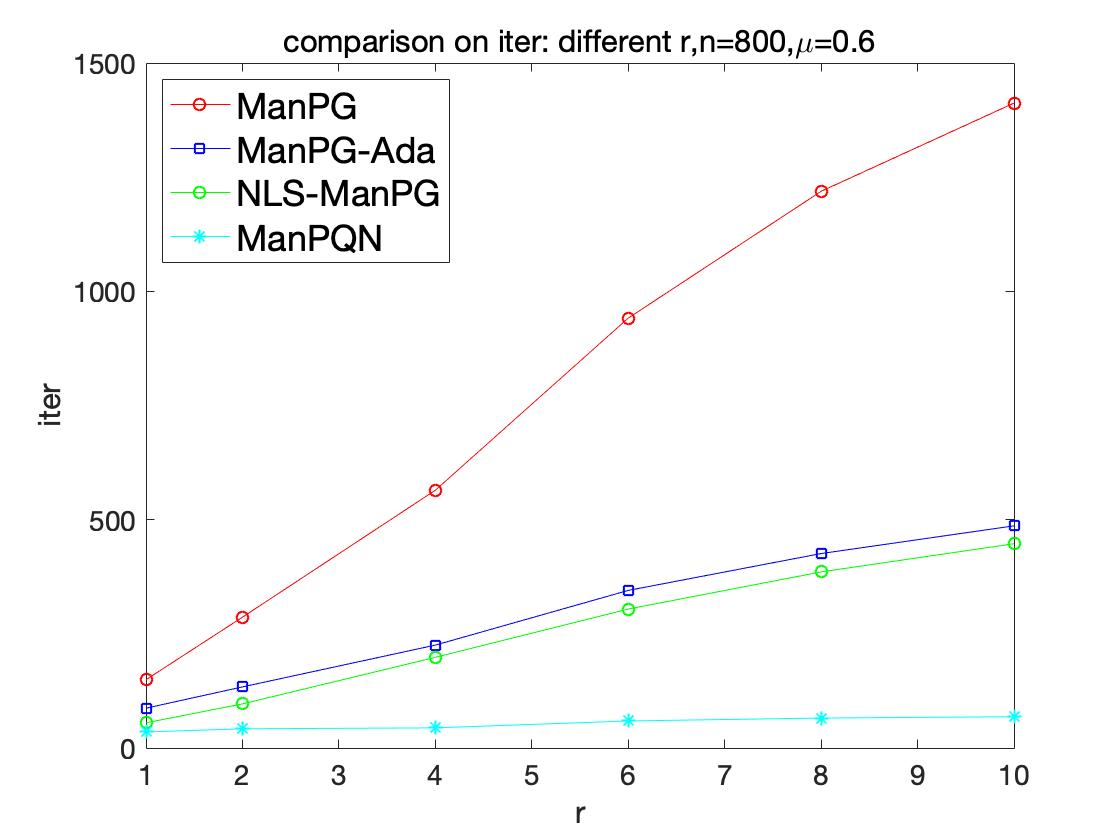}}
		\hspace{-5mm}
	\subfigure[Sparsity]{
		\includegraphics[width=0.34\linewidth]{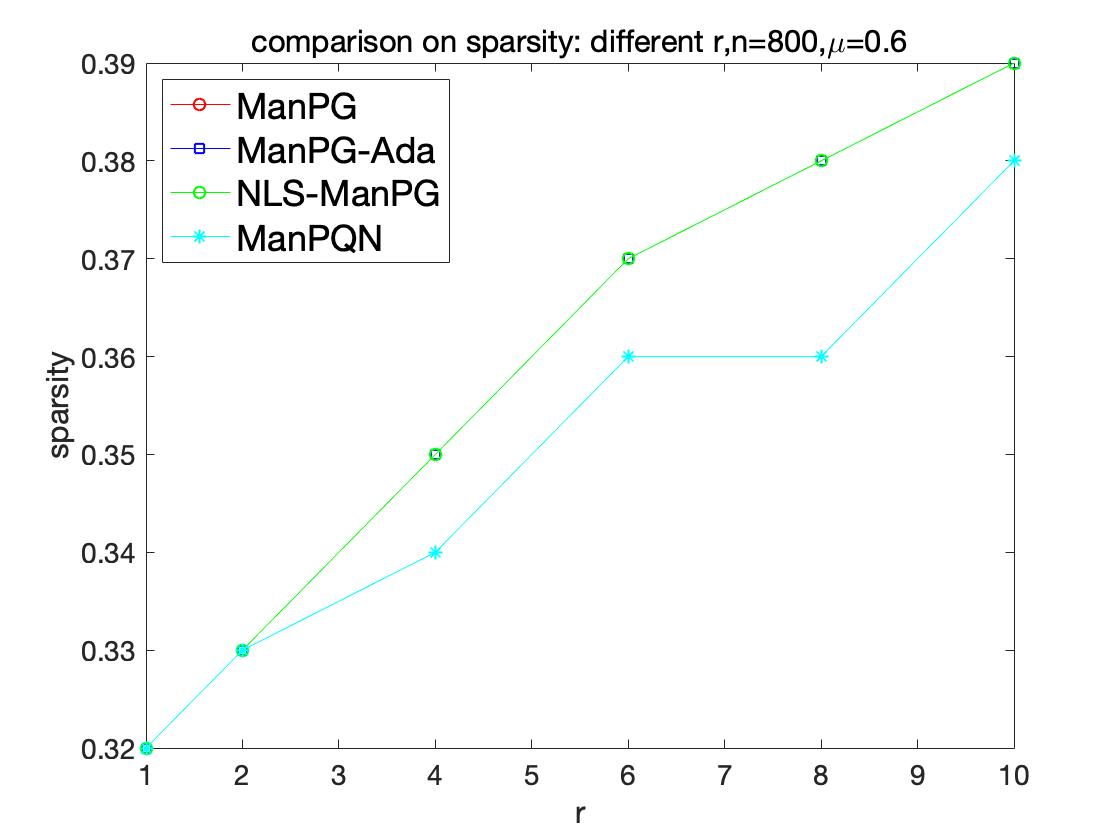}}
	\caption{Comparison on Sparse PCA problem, different $r=\{1,2,4,8,10\}$ with $n= 800$ and $\mu=0.6$.}\label{fig52_2}
\end{figure}

\begin{figure}[H]
	\centering  
	\subfigbottomskip=2pt 
	\subfigcapskip=-5pt 
	\subfigure[CPU]{
		\includegraphics[width=0.34\linewidth]{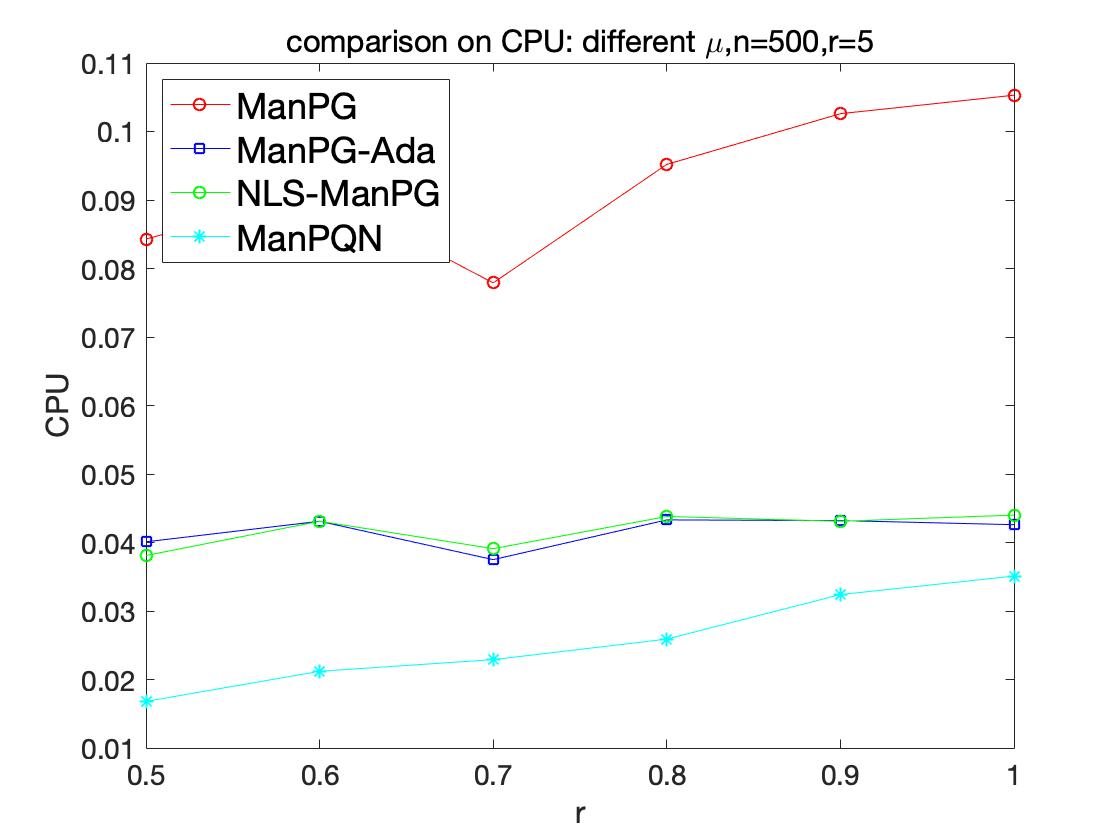}}
		\hspace{-5mm}
	\subfigure[Iter]{
		\includegraphics[width=0.34\linewidth]{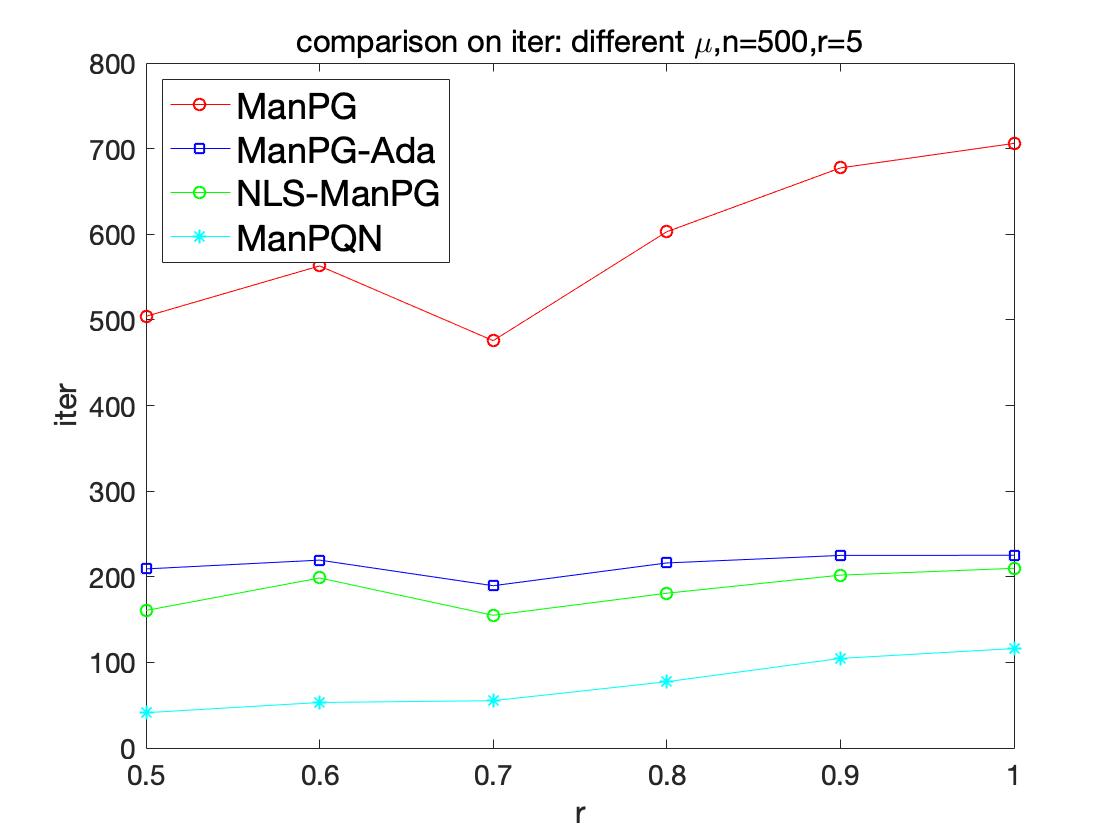}}
		\hspace{-5mm}
	\subfigure[Sparsity]{
		\includegraphics[width=0.34\linewidth]{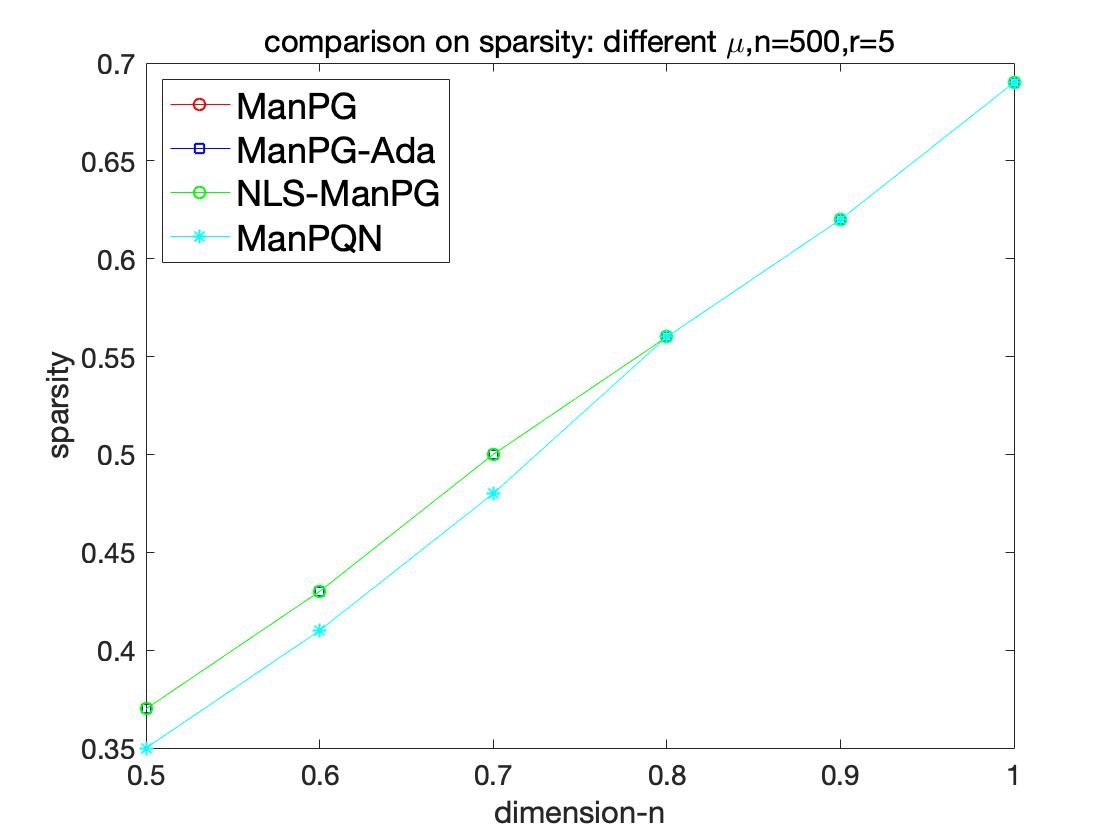}}
	\caption{Comparison on Sparse PCA problem, different $\mu=\{0.5,0.6,0.7, 0.8,0.9,1.0\}$ with $n= 500$ and $r=5$.}\label{fig52_3}
\end{figure}

\begin{figure}[H]
	\centering  
	\subfigbottomskip=2pt 
	\subfigcapskip=-5pt 
	\subfigure[$r=4$]{
		\includegraphics[width=0.5\linewidth]{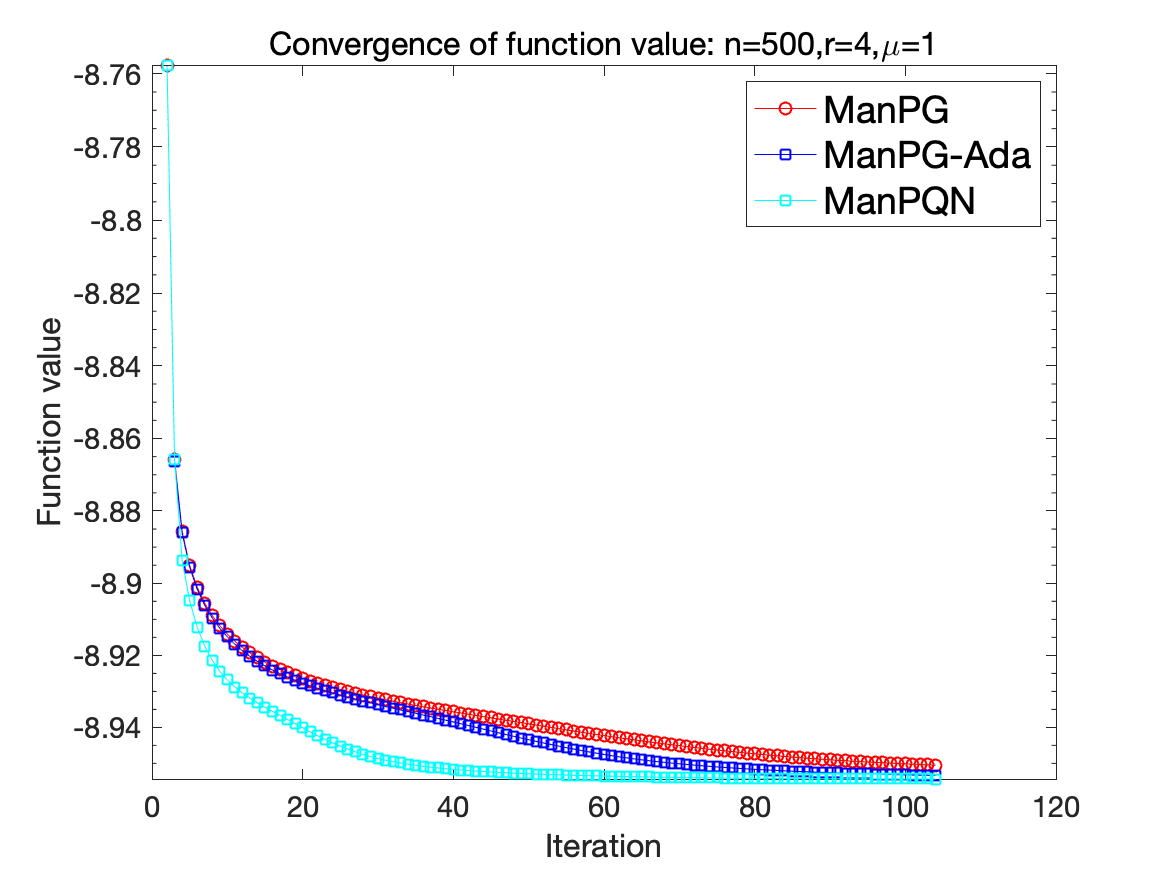}}
		\hspace{-5mm}
	\subfigure[$r=10$]{
		\includegraphics[width=0.5\linewidth]{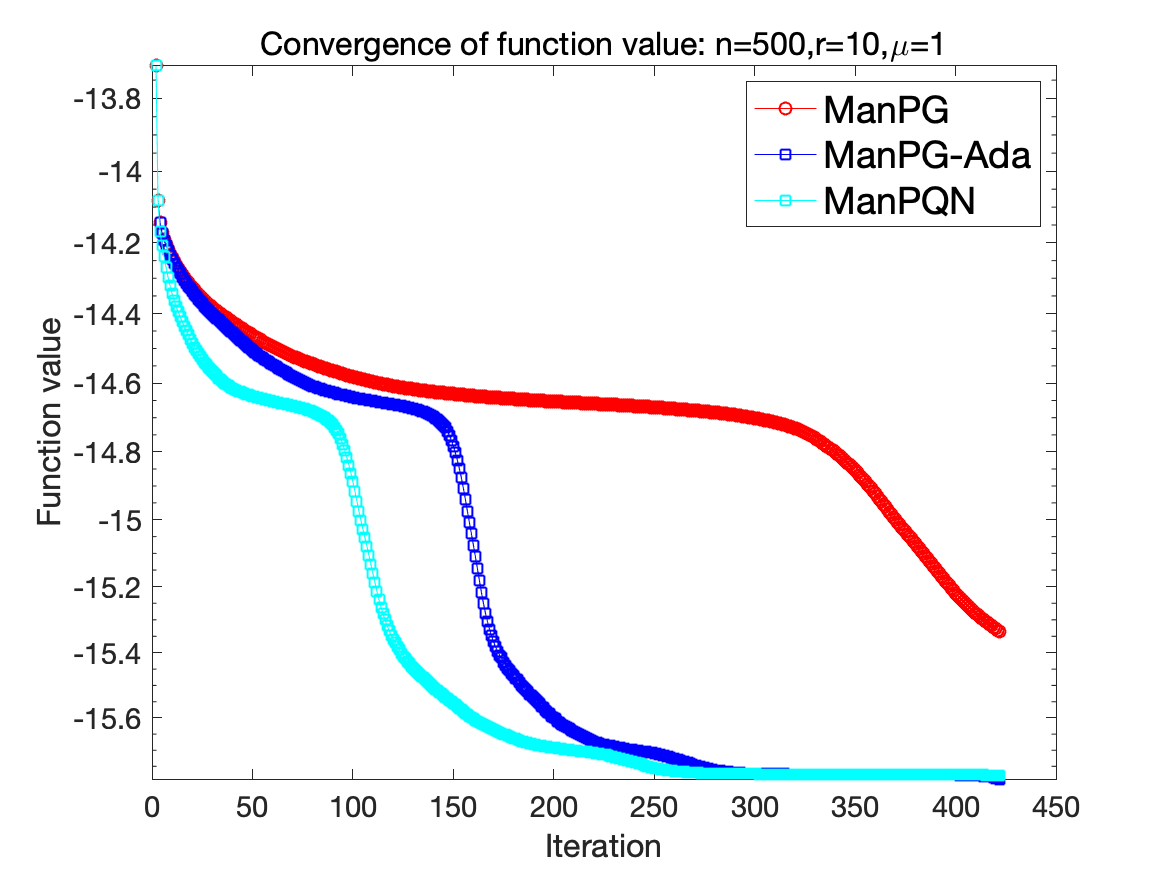}}
	\caption{Convergence of function value on Sparse PCA problem with $n=500, \mu=1.0$.}\label{fig52_4}
\end{figure} 

\begin{figure}[H]
	\centering  
	\subfigbottomskip=2pt 
	\subfigcapskip=-5pt 
	\subfigure[$r=4$]{
		\includegraphics[width=0.5\linewidth]{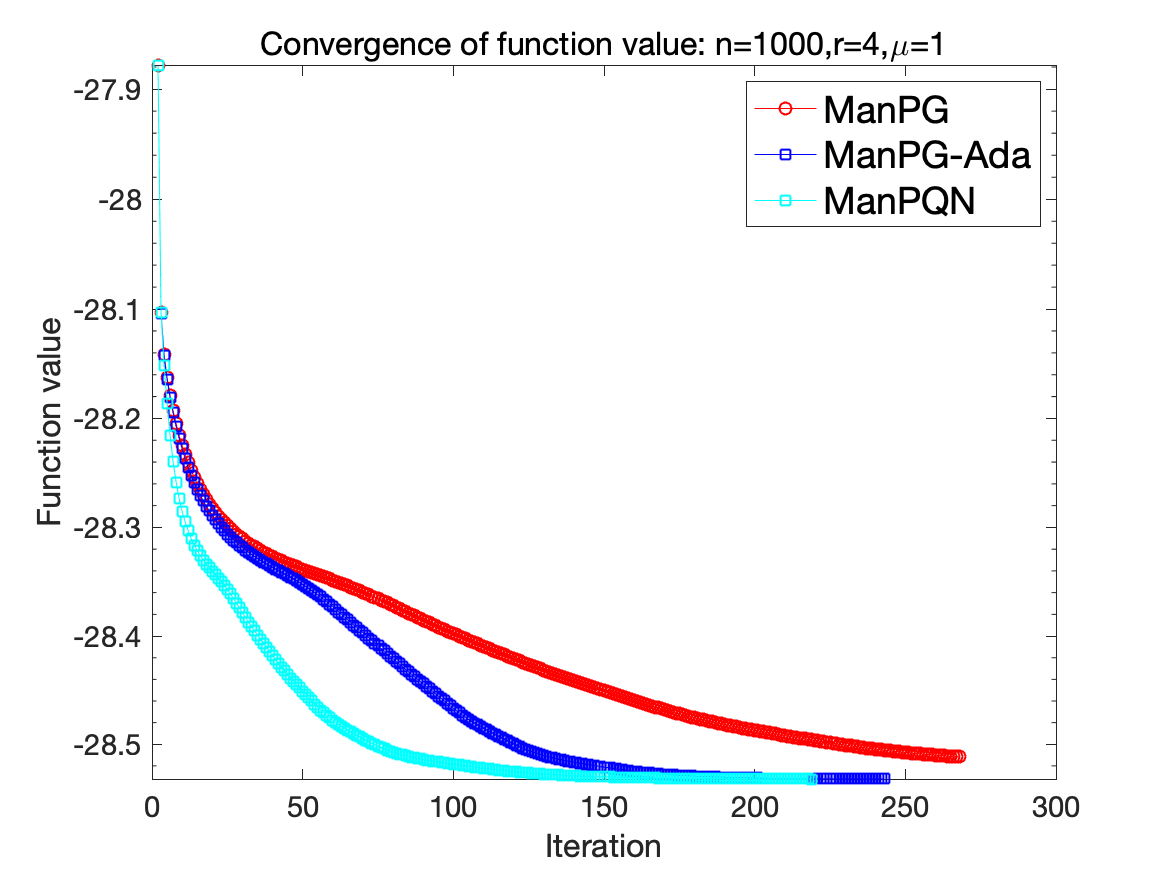}}
		\hspace{-5mm}
	\subfigure[$r=10$]{
		\includegraphics[width=0.5\linewidth]{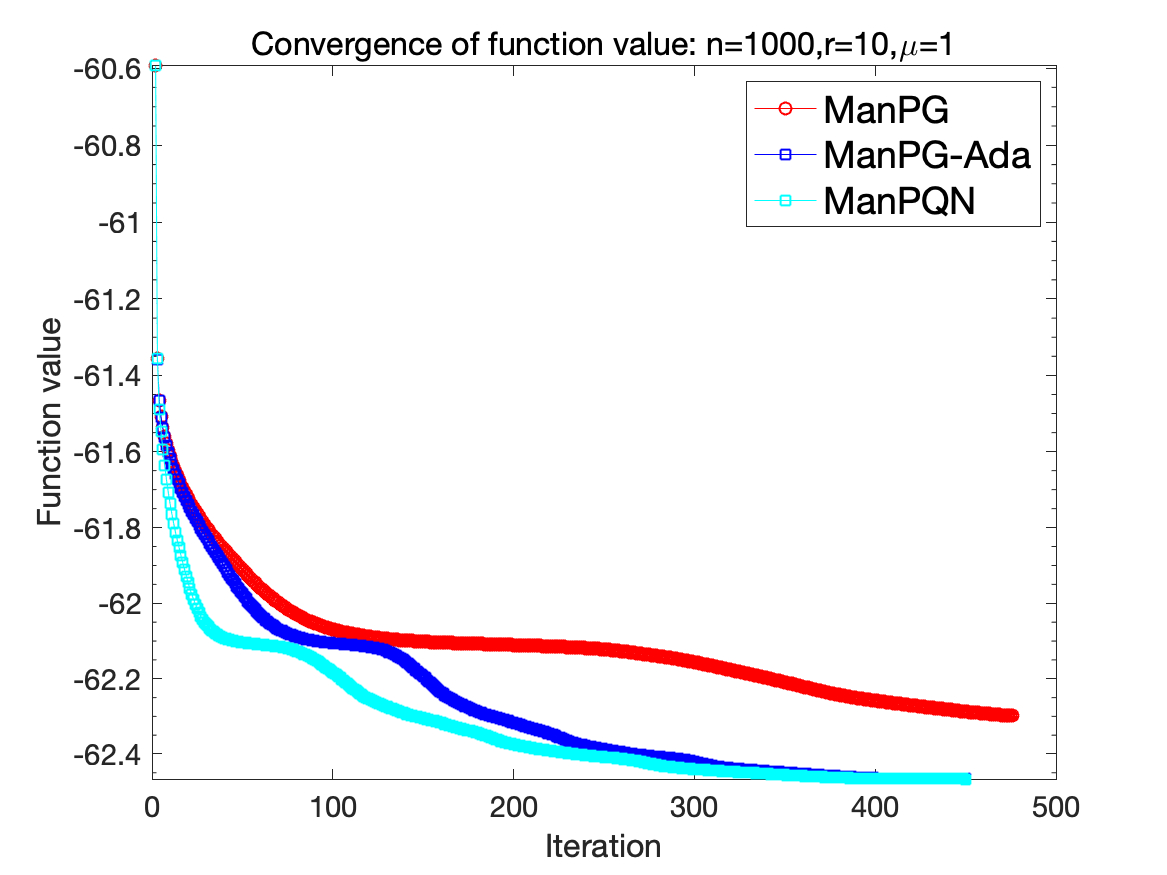}}	
	\caption{Convergence of function value on Sparse PCA problem with $n=1000,\mu=1.0$.}\label{fig52_5}
\end{figure}

\begin{figure}[H]
	\centering  
	\subfigbottomskip=2pt 
	\subfigcapskip=-5pt 
	\subfigure[$r=4$]{
		\includegraphics[width=0.5\linewidth]{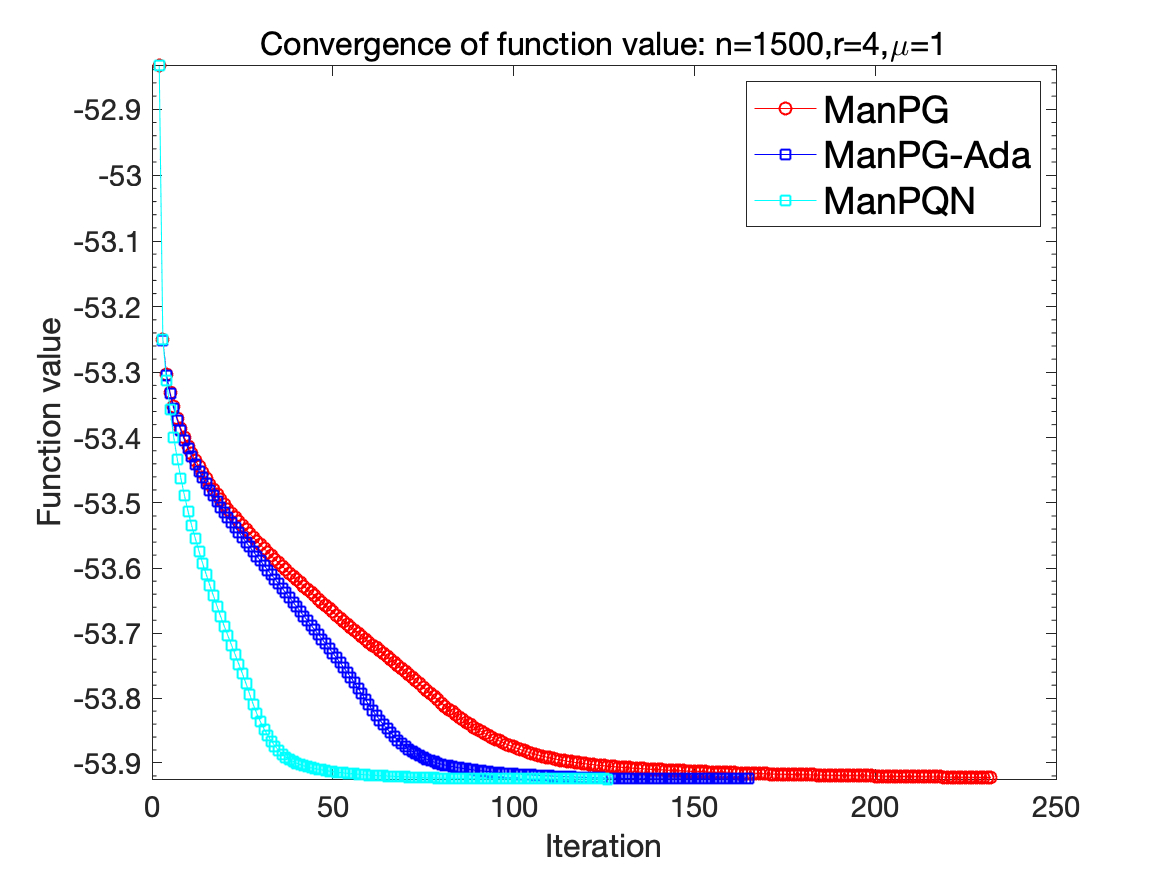}}
		\hspace{-5mm}
	\subfigure[$r=10$]{
		\includegraphics[width=0.5\linewidth]{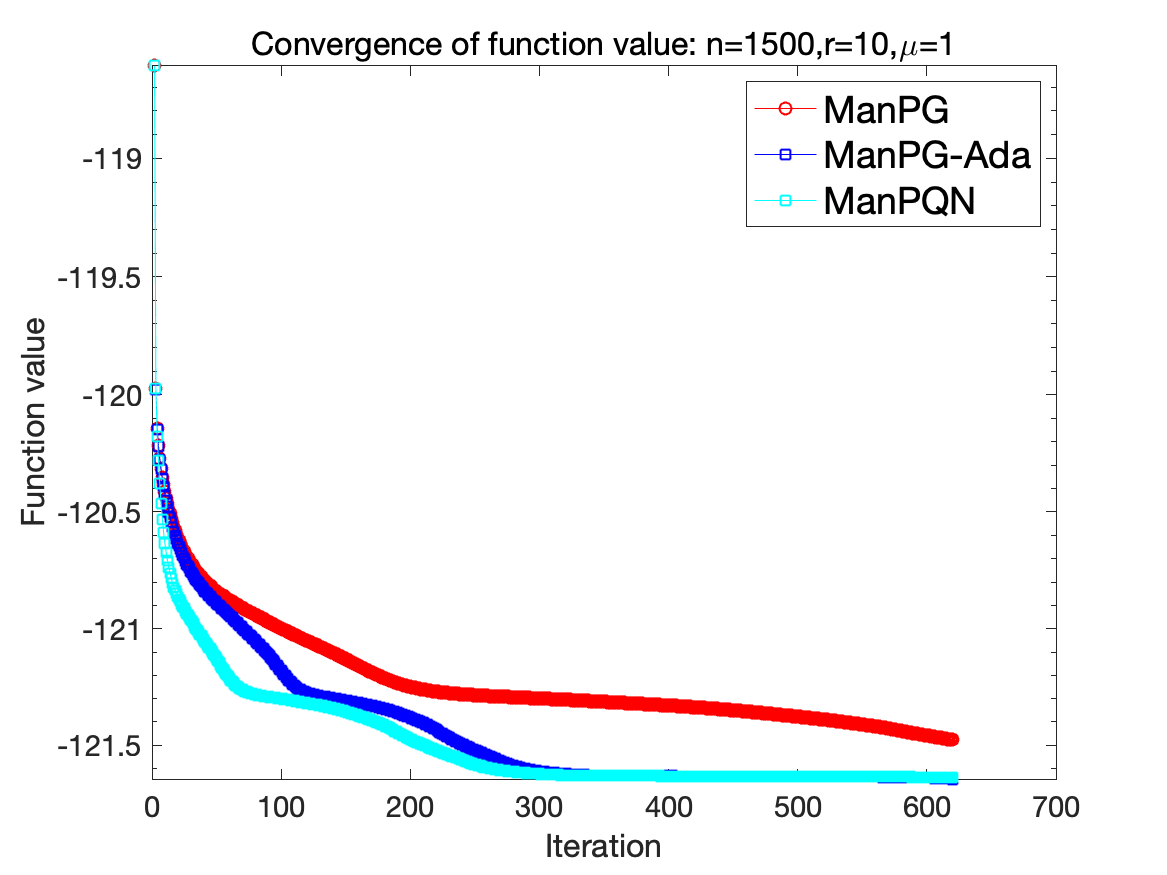}}	
	\caption{Convergence of function value on Sparse PCA problem with $n=1500,\mu=1.0$.}\label{fig52_6}
\end{figure}

\begin{table}[htbp]\centering
\caption{Comparison on Sparse PCA problem, different $n=\{100,200,500,800,1000,1500\}$ with $r= 5$ and $\mu=0.8$.}\label{tb52_1}
\begin{tabular}{ccccccc}
\midrule
$n=100$ & Iter & $F(X^*)$  & sparsity & CPU time & \# line-search &  SSN iters  \\
\midrule
ManPG       &   271.02  & -2.285  &    0.89  &   0.0267   &   1.86   &   1.20  \\ 
ManPG-Ada  &   126.40  & -2.285  &    0.89  &   0.0158   &   0.38   &   1.49  \\ 
NLS-ManPG   &   60.96  & -2.285  &    0.89  &   0.0125   &   0.00   &   2.34  \\ 
ManPQN     &   57.64  & -2.274  &    0.89  &   0.0193   &   23.94   &   2.68  \\ 
\midrule
$n=200$ &   \\
\midrule
ManPG       &   412.32  & -5.449  &    0.75  &   0.0434   &   1.94   &   1.07  \\ 
ManPG-Ada  &   160.38  & -5.449  &    0.75  &   0.0212   &   7.22   &   1.33  \\ 
NLS-ManPG   &   112.48  & -5.449  &    0.75  &   0.0180   &   0.00   &   1.50  \\ 
ManPQN     &   58.72  & -5.428  &    0.75  &   0.0138   &   37.92   &   2.03  \\ 
\midrule
$n=500$ &   \\
\midrule
ManPG       &   667.08  & -20.30  &    0.56  &   0.1017   &   0.00   &   1.02  \\ 
ManPG-Ada  &   232.50  & -20.30  &    0.56  &   0.0451   &   48.86   &   1.22  \\ 
NLS-ManPG   &   199.50  & -20.30  &    0.56  &   0.0432   &   0.00   &   1.38  \\ 
ManPQN     &   66.34  & -20.21  &    0.55  &   0.0226   &   41.76   &   1.96  \\ 
\midrule
$n=800$ &   \\
\midrule
ManPG       &   778.82  & -39.16  &    0.47  &   0.2204   &   0.00   &   1.02  \\ 
ManPG-Ada  &   272.06  & -39.16  &    0.47  &   0.0979   &   70.66   &   1.19  \\ 
NLS-ManPG   &   251.44  & -39.16  &    0.47  &   0.0978   &   0.00   &   1.30  \\ 
ManPQN     &   77.32  & -39.07  &    0.46  &   0.0439   &   50.14   &   1.95  \\ 
\midrule
$n=1000$ &   \\
\midrule
ManPG       &   947.26  & -53.23  &    0.42  &   0.3370   &   0.00   &   1.01  \\ 
ManPG-Ada  &   325.62  & -53.23  &    0.42  &   0.1323   &   102.98   &   1.17  \\ 
NLS-ManPG   &   320.90  & -53.23  &    0.42  &   0.1354   &   0.00   &   1.24  \\ 
ManPQN     &   92.50  & -53.17  &    0.42  &   0.0754   &   68.24   &   1.92  \\ 
\midrule
$n=1500$ &   \\
\midrule
ManPG       &   1142.60  & -89.60  &    0.36  &   0.4757   &   19.04   &   1.01  \\ 
ManPG-Ada  &   394.10  & -89.60  &    0.36  &   0.1929   &   150.80   &   1.12  \\ 
NLS-ManPG   &   393.40  & -89.60  &    0.36  &   0.1981   &   0.00   &   1.20  \\ 
ManPQN     &   137.94  & -89.50  &    0.36  &   0.1238   &   171.00   &   1.87  \\ 
\midrule
\end{tabular}
\end{table}

\begin{table}[htbp]\centering
\caption{Comparison on Sparse PCA problem, different $r=\{1,2,4,8,10\}$ with $n= 800$ and $\mu=0.6$.}\label{tb52_2}
\begin{tabular}{ccccccc}
\midrule
$r=1$ & Iter & $F(X^*)$  & sparsity & CPU time & \# line-search &  SSN iters  \\
\midrule
ManPG       &   149.56  & -12.21  &    0.32  &   0.0163   &   0.00   &   0.78  \\ 
ManPG-Ada  &   87.18  & -12.21  &    0.32  &   0.0117   &   2.00   &   0.93  \\ 
NLS-ManPG   &   55.10  & -12.21  &    0.33  &   0.0109   &   0.00   &   1.03  \\ 
ManPQN     &   35.16  & -12.21  &    0.32  &   0.0134   &   34.04   &   1.26  \\ 
\midrule
$r=2$ &   \\
\midrule
ManPG       &   286.48  & -24.01  &    0.33  &   0.0352   &   93.48   &   0.99  \\ 
ManPG-Ada  &   133.80  & -24.01  &    0.33  &   0.0185   &   14.92   &   1.06  \\ 
NLS-ManPG   &   96.58  & -24.01  &    0.33  &   0.0166   &   0.02   &   1.11  \\ 
ManPQN     &   42.04  & -23.98  &    0.33  &   0.0160   &   39.32   &   1.63  \\ 
\midrule
$r=4$ &   \\
\midrule
ManPG       &   564.46  & -45.83  &    0.35  &   0.1357   &   4.52   &   1.02  \\ 
ManPG-Ada  &   225.32  & -45.83  &    0.35  &   0.0647   &   56.36   &   1.07  \\ 
NLS-ManPG   &   198.54  & -45.83  &    0.35  &   0.0652   &   0.00   &   1.16  \\ 
ManPQN    &   44.10  & -45.77  &    0.34  &   0.0260   &   31.94   &   1.83  \\ 
\midrule
$r=6$ &   \\
\midrule
ManPG       &   941.32  & -66.18  &    0.37  &   0.3209   &   0.00   &   1.01  \\ 
ManPG-Ada  &   345.08  & -66.18  &    0.37  &   0.1451   &   120.30   &   1.19  \\ 
NLS-ManPG   &   304.30  & -66.18  &    0.37  &   0.1285   &   0.00   &   1.28  \\ 
ManPQN     &   59.18  & -66.03  &    0.36  &   0.0448   &   39.76   &   1.96  \\ 
\midrule
$r=8$ &   \\
\midrule
ManPG       &   1219.04  & -85.57  &    0.38  &   0.5664   &   0.00   &   1.01  \\ 
ManPG-Ada  &   425.80  & -85.57  &    0.38  &   0.2508   &   171.44   &   1.32  \\ 
NLS-ManPG   &   385.58  & -85.57  &    0.38  &   0.2298   &   0.00   &   1.37  \\ 
ManPQN     &   65.56  & -85.39  &    0.36  &   0.0594   &   40.82   &   2.01  \\ 
\midrule
$r=10$ &   \\
\midrule
ManPG       &   1412.26  & -103.59  &    0.39  &   0.8113   &   0.00   &   1.02  \\ 
ManPG-Ada  &   486.46  & -103.59  &    0.39  &   0.3737   &   206.70   &   1.44  \\ 
NLS-ManPG   &   447.34  & -103.59  &    0.39  &   0.3418   &   0.00   &   1.41  \\ 
ManPQN     &   68.52  & -103.40  &    0.38  &   0.0799   &   42.38   &   2.10  \\ 
\midrule
\end{tabular}
\end{table}

\begin{table}[htbp]\centering
\caption{Comparison on Sparse PCA problem, different $\mu=\{0.5,0.6,0.7, 0.8,0.9,1.0\}$ with $n= 500$ and $r=5$.}\label{tb52_3}
\begin{tabular}{ccccccc}
\midrule
$\mu=0.5$ & Iter & $F(X^*)$  & sparsity & CPU time & \# line-search &  SSN iters  \\
\midrule
ManPG       &   504.12  & -39.41  &    0.37  &   0.0843   &   0.00   &   1.02  \\ 
ManPG-Ada  &   209.14  & -39.41  &    0.37  &   0.0401   &   43.44   &   1.13  \\ 
NLS-ManPG   &   160.62  & -39.41  &    0.37  &   0.0381   &   0.00   &   1.27  \\ 
ManPQN     &   41.24  & -39.34  &    0.35  &   0.0168   &   26.60   &   1.87  \\ 
\midrule
$\mu=0.6$ & \\
\midrule
ManPG       &   563.02  & -32.69  &    0.43  &   0.0912   &   0.00   &   1.02  \\ 
ManPG-Ada  &   219.42  & -32.69  &    0.43  &   0.0431   &   49.30   &   1.15  \\ 
NLS-ManPG   &   198.58  & -32.69  &    0.43  &   0.0431   &   0.00   &   1.25  \\ 
ManPQN      &   53.16  & -32.63  &    0.41  &   0.0212   &   33.72   &   1.90  \\ 
\midrule
$\mu=0.7$ & \\
\midrule
ManPG       &   475.74  & -26.30  &    0.50  &   0.0779   &   0.00   &   1.02  \\ 
ManPG-Ada  &   189.52  & -26.30  &    0.50  &   0.0375   &   25.36   &   1.14  \\ 
NLS-ManPG   &   154.96  & -26.30  &    0.50  &   0.0391   &   0.00   &   1.28  \\ 
ManPQN      &   55.26  & -26.26  &    0.48  &   0.0229   &   34.32   &   1.96  \\ 
\midrule
$\mu=0.8$ & \\
\midrule
ManPG       &   602.94  & -20.36  &    0.56  &   0.0952   &   1.40   &   1.02  \\ 
ManPG-Ada  &   216.12  & -20.36  &    0.56  &   0.0433   &   39.86   &   1.24  \\ 
NLS-ManPG   &   180.76  & --20.36  &    0.56  &   0.0438   &   0.00   &   1.40  \\ 
ManPQN      &   77.40  & -20.32  &    0.56  &   0.0259   &   53.34   &   1.96  \\ 
\midrule
$\mu=0.9$ & \\
\midrule
ManPG       &   677.48  & -14.63  &    0.62  &   0.1026   &   0.14   &   1.03  \\ 
ManPG-Ada  &   224.90  & -14.63  &    0.62  &   0.0432   &   38.54   &   1.27  \\ 
NLS-ManPG   &   201.88  & -14.63  &    0.62  &   0.0431   &   0.00   &   1.41  \\ 
ManPQN      &   104.62  & -14.59  &    0.62  &   0.0324   &   69.30   &   1.96  \\ 
\midrule
$\mu=1.0$ & \\
\midrule
ManPG       &   706.12  & -9.525  &    0.69  &   0.1053   &   0.28   &   1.04  \\ 
ManPG-Ada  &   225.10  & -9.525  &    0.69  &   0.0426   &   32.26   &   1.30  \\ 
NLS-ManPG   &   209.84  & -9.525  &    0.69  &   0.0440   &   0.02   &   1.39  \\ 
ManPQN      &   116.16  & -9.485 &    0.69  &   0.0351   &   109.80   &   1.93  \\ 
\midrule
\end{tabular}
\end{table}

\subsubsection{Sparse PCA problem from ``UF Sparse Matrix Collection''}\label{sec52_2}

To further evaluate the performance of ManPQN and ManPG related algorithms, we apply them to \eqref{eq5_2}, where $A\in\mathbb{R}^{m\times n}$ is chosen from real matrices in \cite{ufmat_web}. 
We choose four matrices named ``lpi$\_$klein1'', ``bcsstk22'', ``lp$\_$fit1d'' and ``fidap003'' 
whose dimensions are $(m,n)=(54,108)$, $(m,n)=(138,138)$, $(m,n)=(24,1049)$ and $(m,n)=(1821,1821)$ respectively. 
We set $r=4$, $\mu=0.2$.
Experiments are repeated for 50 times with different random initial point. 
The numerical results are given in Table \ref{tb52_4}.

Since $A^TA$ generated by these four matrices are ill-conditioned, all algorithms need more iteration numbers and CPU time to converge. 
In Table \ref{tb52_4}, we can observe that ManPQN and NLS-ManPG show much better performance than ManPG and ManPG-Ada in terms of iteration numbers and CPU time. 
Only for matrix ``lpi$\_$klein1'', ManPQN needs slightly more CPU time than NLS-ManPG.
For the other three matrices, ManPQN outperforms NLS-ManPG.

\begin{table}[htbp]\centering
\caption{Comparison on Sparse PCA problem with different matrices from \cite{ufmat_web}.}\label{tb52_4}
\begin{tabular}{ccccccc}
\midrule
``lpi$\_$klein1'' & Iter & $F(X^*)$  & sparsity & CPU time & \# line-search &  SSN iters  \\
\midrule
ManPG       &   2252.00  & -38.77  &    0.94  &   0.1684   &   0.00   &   1.01  \\ 
ManPG-Ada  &   919.00  & -38.77  &    0.94  &   0.0840   &   533.00   &   1.10  \\ 
NLS-ManPG   &   223.00  & -38.77  &    0.94  &   0.0305   &   170.00   &   1.74  \\ 
ManPQN      &   137.00  & -38.76  &    0.93  &   0.0318   &   58.00   &   1.27  \\ 
\midrule
``bcsstk20'' &   \\
\midrule
ManPG       &   11276.00  & -4823.74  &    0.82  &   0.8836   &   157.06   &   1.00  \\ 
ManPG-Ada  &   4685.58  & -4823.74  &    0.82  &   0.4489   &   2985.08   &   1.11  \\ 
NLS-ManPG   &   830.22  & -4823.74  &    0.82  &   0.1073   &   833.80   &   1.59  \\ 
ManPQN      &   137.18  & -4823.71  &    0.80  &   0.0322   &   0.00   &   1.00  \\ 
\midrule
``lp$\_$fit1d'' & \\
\midrule
ManPG       &   9596.26  & -2480.94  &    0.08  &   2.9966   &   0.00   &   1.45  \\ 
ManPG-Ada &   4182.68  & -2480.94  &    0.08  &   1.7876   &   2488.80   &   1.97  \\ 
NLS-ManPG   &   1024.54  & -2480.94  &    0.08  &   0.5133   &   850.58   &   2.31  \\ 
ManPQN      &   210.16  & -2480.64  &    0.05  &   0.1115   &   7.22   &   2.17  \\ 
\midrule
``fidap003'' & \\
\midrule
ManPG       &   4986.52  & -6764.09  &    0.89  &   2.1449   &   0.00   &   1.06  \\ 
ManPG-Ada &   2169.84  & -6764.09  &    0.89  &   1.1988   &   1249.16   &   1.19  \\ 
NLS-ManPG   &   478.88  & -6764.09  &    0.89  &   0.4152   &   449.62   &   1.93  \\ 
ManPQN      &   95.58  & -6764.08  &    0.87  &   0.1705   &   1.14   &   3.07  \\ 
\midrule
\end{tabular}
\end{table}

\subsection{Joint Diagonalization Problem with a Regularization Term}

The joint diagonalization problem with a regularization term on the Stiefel manifold \cite{jd2014} can be written in the following formulation:
\begin{equation}\label{eq5_4}
\min_{X\in\mathcal{M}}-\sum_{l=1}^N\|\mathrm{diag}(X^TA_lX)\|_F^2+\mu\|X\|_1,
\end{equation}
where $A_1,A_2,\dots,A_N\in\mathbb{R}^{n\times n}$ are $N$ real symmetric matrices.

Let $f(X):=-\sum_{l=1}^N\|\mathrm{diag}(X^TA_lX)\|_F^2$. 
We can deduce that the Euclidean gradient of $f$ is 
\[
\nabla f(X)=-4\sum_{l=1}^NA_lX\mathrm{diag}(X^TA_lX),
\]
and the Riemannian gradient of $f$ is 
\[
\mathrm{grad} f(X)=-4\sum_{l=1}^N(A_lX\mathrm{diag}(X^TA_lX)
-X\mathrm{sym}(X^TA_lX\mathrm{diag}(X^TA_lX)))
,
\]
where $\mathrm{sym}(Y):=(Y^T+Y)/2$.

For numerical experiments of \eqref{eq5_4}, we generate $N$ randomly chosen $n\times n$ diagonal matrices $\Lambda_1,\Lambda_2,\dots,\Lambda_N$ and a randomly chosen $n\times n$ orthogonal matrix $P$. Then, let $A_1,A_2,\dots,A_N$ be computed by $A_i=P^T\Lambda_iP$ for $i=1,2,\dots,N$.
We set $N=5$ in the following experiments.

The numerical results of each algorithm are displayed in Figures \ref{fig54_1}-\ref{fig54_3} and Tables \ref{tb54_1}-\ref{tb54_3}. 
Since all the algorithms need large number of line search steps, we report the average iteration numbers of line search in Tables \ref{tb54_1}-\ref{tb54_3}. 
Figures \ref{fig54_1}-\ref{fig54_3} show that ManPQN needs much less iterations and time than ManPG and ManPG-Ada.
In most cases, ManPQN outperforms NLS-ManPG in terms of CPU time, especially when $n$ and $r$ are large.

\begin{figure}[H]
	\centering  
	\subfigbottomskip=2pt 
	\subfigcapskip=-5pt
	\subfigure[CPU]{
		\includegraphics[width=0.5\linewidth]{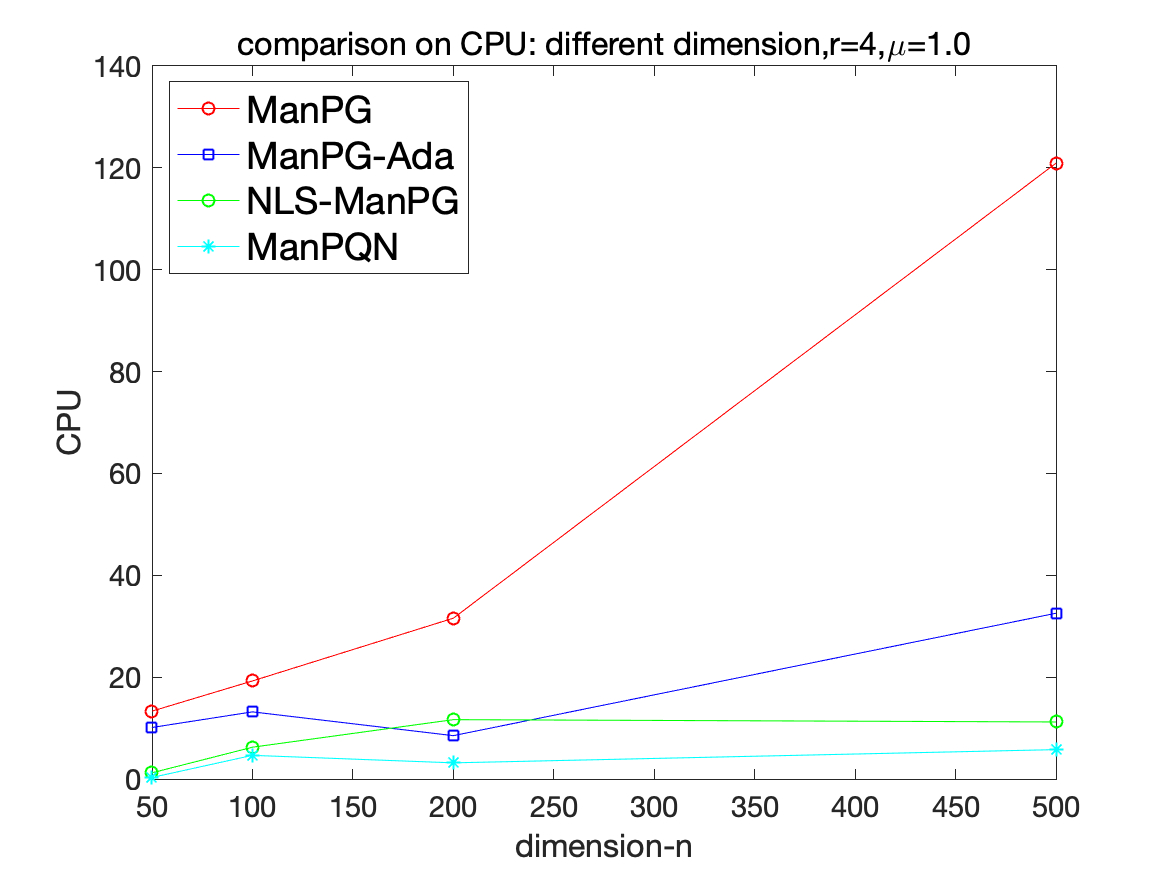}}
		\hspace{-5mm}
	\subfigure[Iter]{
		\includegraphics[width=0.5\linewidth]{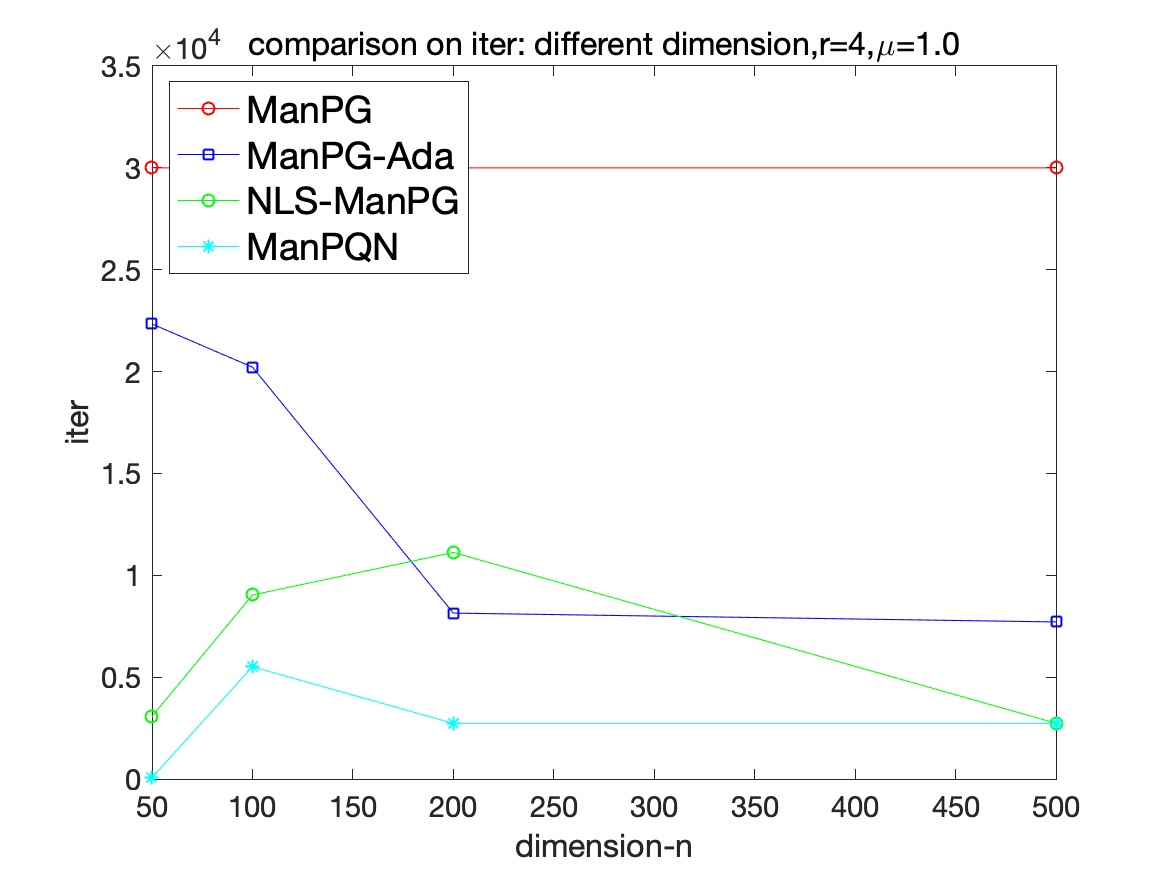}}
	\caption{Comparison on problem \eqref{eq5_4}, different $n=\{50,100,200,500\}$ with $r=4$ and $\mu=1.0$.}\label{fig54_1}
\end{figure}

\begin{figure}[H]
	\centering  
	\subfigbottomskip=2pt 
	\subfigcapskip=-5pt 
	\subfigure[CPU]{
		\includegraphics[width=0.5\linewidth]{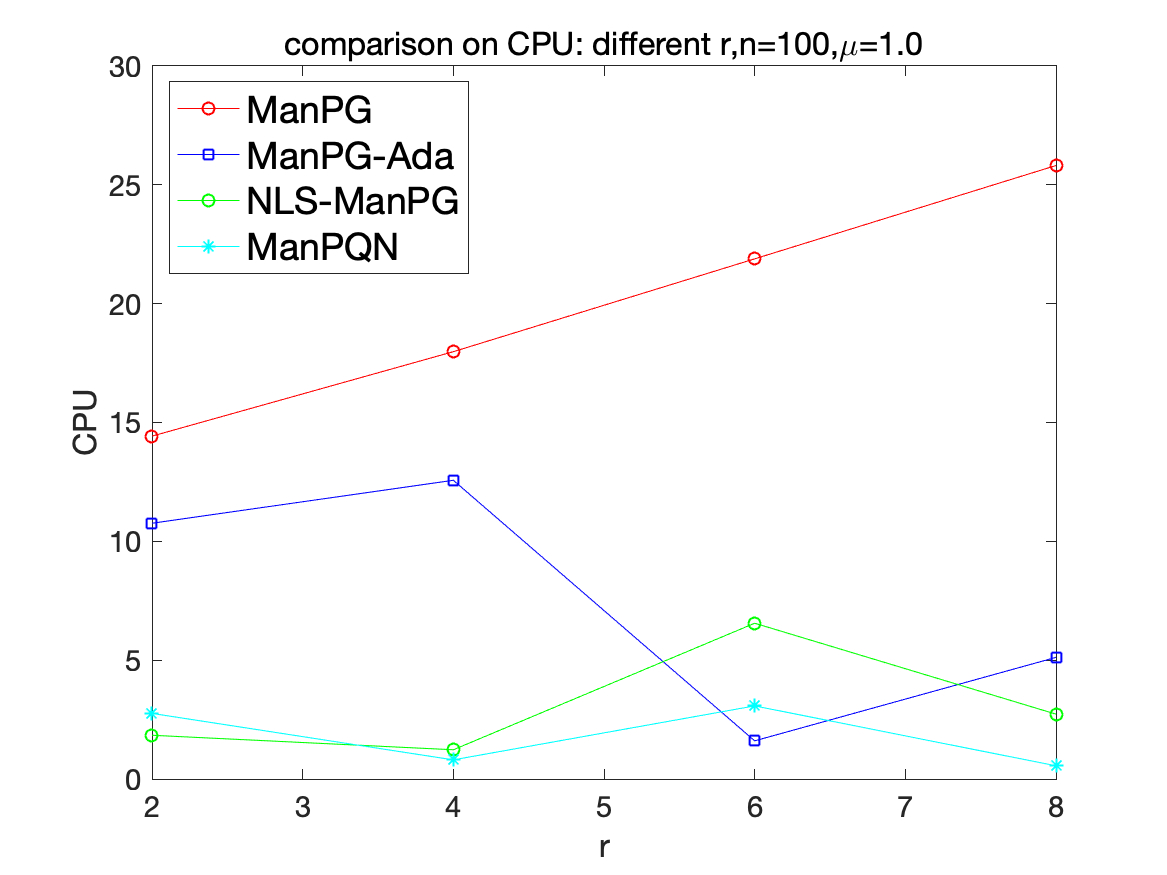}}
		\hspace{-5mm}
	\subfigure[Iter]{
		\includegraphics[width=0.5\linewidth]{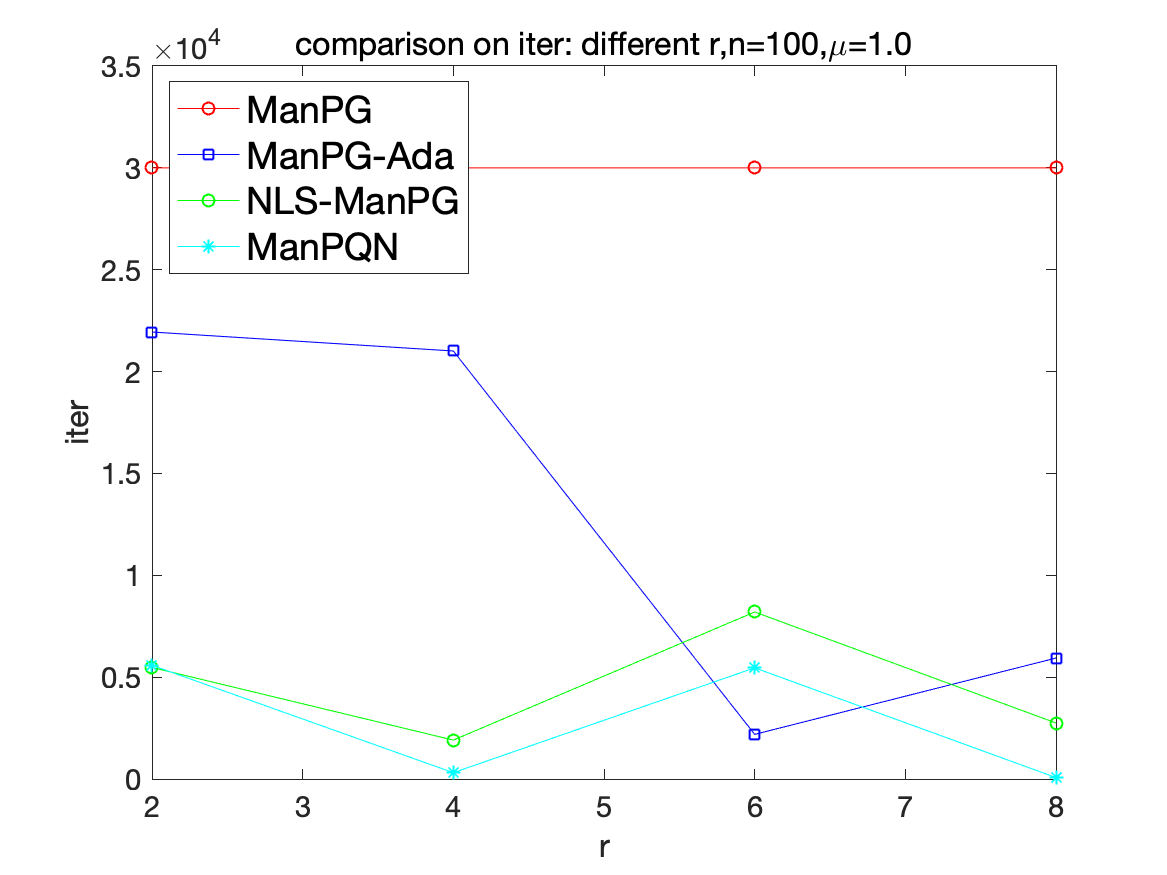}}
	\caption{Comparison on problem \eqref{eq5_4}, different $r=\{2,4,6,8\}$ with $n=100$ and $\mu=1.0$.}\label{fig54_2}
\end{figure}

\begin{figure}[H]
	\centering  
	\subfigbottomskip=2pt 
	\subfigcapskip=-5pt 
	\subfigure[CPU]{
		\includegraphics[width=0.5\linewidth]{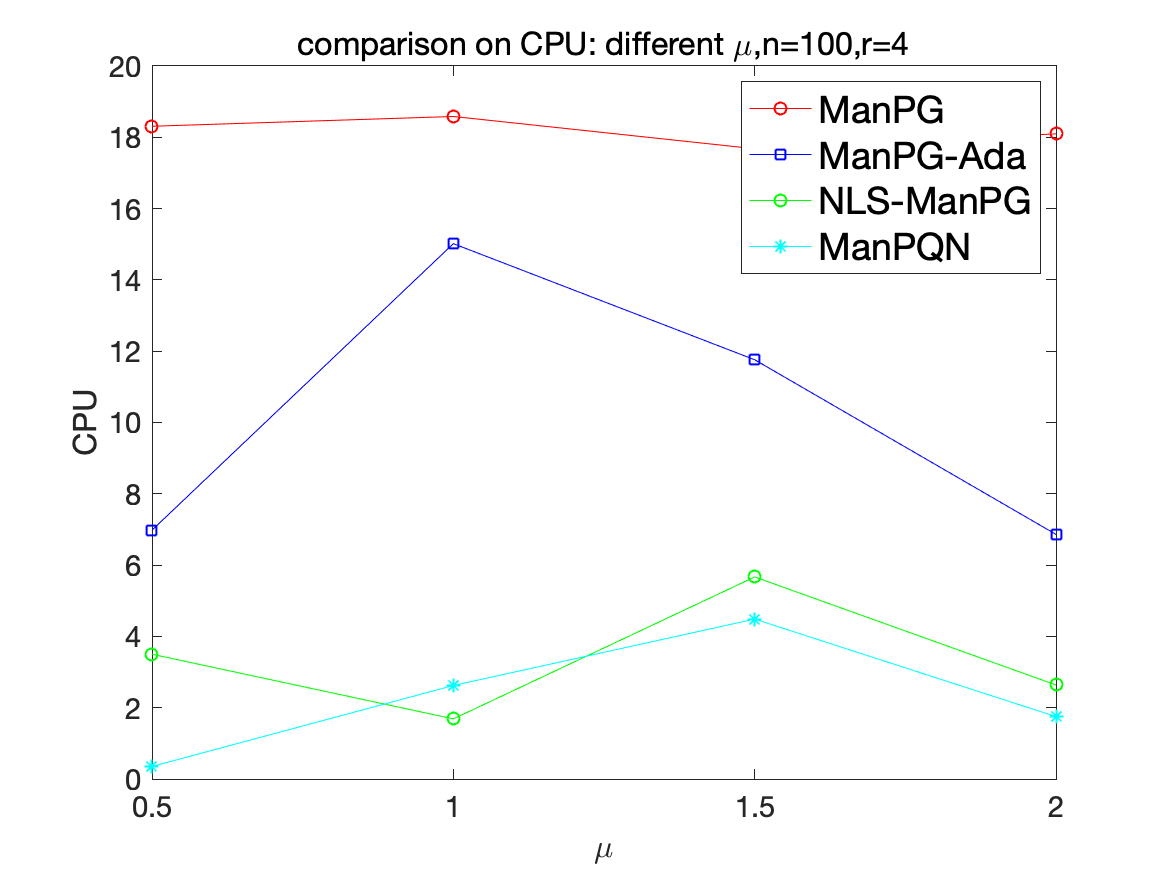}}
		\hspace{-5mm}
	\subfigure[Iter]{
		\includegraphics[width=0.5\linewidth]{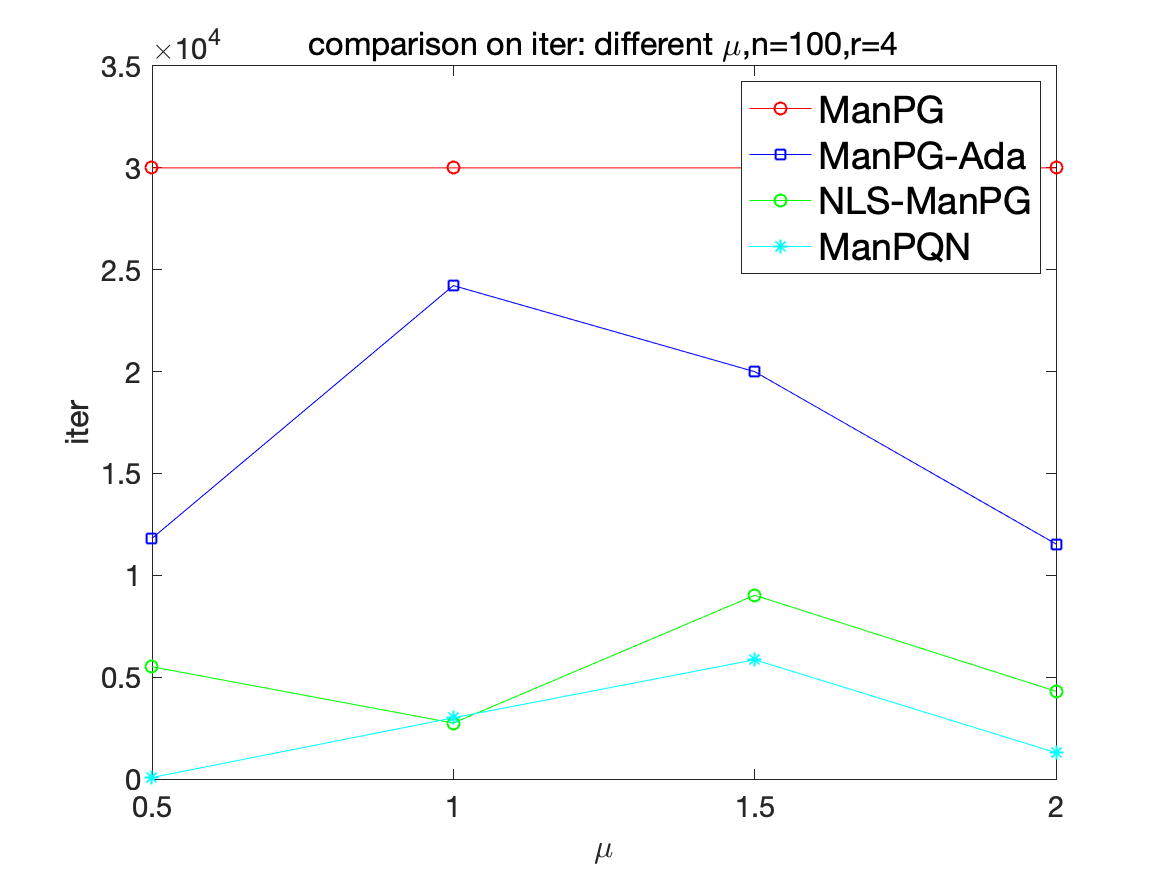}}
	\caption{Comparison on problem \eqref{eq5_4}, different $\mu=\{0.5,1.0,1.5,2.0\}$ with $n=100$ and $r=4$.}\label{fig54_3}
\end{figure}

\begin{table}[htbp]\centering
\caption{Comparison on problem \eqref{eq5_4}, different $n=\{50,100,200,500\}$ with $r=4$ and $\mu=1.0$.}\label{tb54_1}
\begin{tabular}{cccccc}
\midrule
$n=50$ & Iter & $F(X^*)$  & CPU time & line-search &  SSN iters  \\
\midrule
ManPG      &   30001.00  & -72.804  &   13.3257   &   13.99   &   1.00  \\ 
ManPG-Ada &   22335.45  & -72.784  &    10.1168   &   13.98   &   1.01  \\ 
NLS-ManPG  &   3073.09   & -72.821  &    1.2158   &   12.09   &   2.91  \\ 
ManPQN     &   68.82    & -72.825  &     0.3008   &   9.48   &   12.54  \\ 
\midrule
$n=100$ &   \\
\midrule
ManPG      &   30001.00  & -126.85  &     19.2554   &   13.99   &   1.01  \\ 
ManPG-Ada &   20215.64  & -126.85  &     13.1823   &   13.98   &   1.68  \\ 
NLS-ManPG  &   9032.55   & -126.87  &    6.2430   &   13.97   &   4.04  \\ 
ManPQN     &   5509.45    & -126.81  &    4.6449   &   12.73   &   14.62  \\ 
\midrule
$n=200$ &   \\
\midrule
ManPG      &   30001.00  & -148.11  &     31.5747   &   13.99   &   1.00  \\ 
ManPG-Ada &   8143.82  & -148.11  &    8.5102   &   13.97   &   2.44  \\ 
NLS-ManPG  &   11119.36   & -148.11  &     11.6534   &   13.9   &   3.22  \\ 
ManPQN     &   2730.18    & -148.17  &       3.1700   &   13.5   &   13.32  \\ 
\midrule
$n=500$ &   \\
\midrule
ManPG      &   30001.00  & -222.75  &   120.9160   &   13.99  &   1.00  \\ 
ManPG-Ada &   7709.36  & -222.77  &    32.5464   &   13.98   &   3.50  \\ 
NLS-ManPG  &   2736.82   & -222.79  &    11.1977   &   13.95   &   4.70  \\ 
ManPQN     &   2729.18    & -222.93  &    5.7707   &   5.54   &   22.69  \\ 
\midrule
\end{tabular}
\end{table}

\begin{table}[htbp]\centering
\caption{Comparison on problem \eqref{eq5_4}, different $r=\{2,4,6,8\}$ with $n=100$ and $\mu=1.0$.}\label{tb54_2}
\begin{tabular}{cccccc}
\midrule
$r=2$ & Iter & $F(X^*)$  & CPU time & line-search &  SSN iters  \\
\midrule
ManPG      &   30001.00  & -69.79  &   14.4263   &   13.99   &   0.92  \\ 
ManPG-Ada &   21941.64  & -69.79  &  10.7613   &   13.98   &   1.22  \\ 
NLS-ManPG  &   5474.55   & -69.80  &   1.8400   &   8.81   &   3.24  \\ 
ManPQN     &   5580.91    & -69.80  &   2.7605   &   8.73   &   16.92  \\
\midrule
$r=4$ &   \\
\midrule
ManPG      &   30001.00  & -119.37  &     17.9799   &   13.99   &   1.00  \\ 
ManPG-Ada &   21010.00  & -119.38  &    12.5684   &  13.98  &   1.80  \\ 
NLS-ManPG  &   1909.36   & -119.39  &    1.2302   &   12.56  &   4.35  \\ 
ManPQN     &   310.09    & -119.42  &    0.8054   &   12.44   &   18.71  \\ 
\midrule
$r=6$ &   \\
\midrule
ManPG      &   30001.00  & -122.92  &     21.8858   &   13.99   &   1.00  \\ 
ManPG-Ada &   2190.82  & -122.92 &     1.6054   &   13.98   &   1.47  \\ 
NLS-ManPG  &   8198.64   & -122.91  &    6.5546   &   13.95   &   4.10  \\ 
ManPQN     &   5456.36    & -123.00  &     3.0761   &   5.89   &   10.95  \\
\midrule
$r=8$ &   \\
\midrule
ManPG      &   30001.00  & -142.23  &     25.8129   &   13.99   &   1.00  \\ 
ManPG-Ada &   5973.36  & -142.29  &      5.1177   &   13.97   &   2.67  \\ 
NLS-ManPG  &   2747.73   & -142.30  &    2.7289   &   13.94   &   5.02  \\ 
ManPQN     &   70.00    & -142.54  &     0.5549   &   9.25   &   17.60  \\ 
\midrule
\end{tabular}
\end{table}

\begin{table}[htbp]\centering
\caption{Comparison on problem \eqref{eq5_4}, different $\mu=\{0.5,1.0,1.5,2.0\}$ with $n=100$ and $r=4$.}\label{tb54_3}
\begin{tabular}{cccccc}
\midrule
$\mu=0.5$ & Iter & $F(X^*)$  & CPU time & line-search &  SSN iters  \\
\midrule
ManPG      &   30001.00  & -134.85  &    18.3098   &   13.99   &   1.00  \\ 
ManPG-Ada &   11813.55  & -134.86  &     6.9803   &   13.96   &   1.84  \\ 
NLS-ManPG  &   5509.45   & -134.86  &    3.5001   &   13.92   &   3.74  \\ 
ManPQN     &   74.27    & -134.87  &    0.3515   &   8.62   &   13.36  \\ 
\midrule
$\mu=1.0$ &   \\
\midrule
ManPG      &   30001.00  & -129.57  &   18.5839   &   13.99   &   0.98  \\ 
ManPG-Ada &   24221.09  & -129.57  &    15.0166   &   13.99   &   1.24  \\ 
NLS-ManPG  &   2745.73   & -129.59  &   1.6866   &   13.90   &   3.84  \\ 
ManPQN     &   3003.18    & -129.58  &    2.6224   &   13.76   &   12.40  \\ 
\midrule
$\mu=1.5$ &   \\
\midrule
ManPG      &   30001.00  & -112.51  &    17.6801   &   13.99   &   1.00  \\ 
ManPG-Ada &   19994.13  & -112.51  &     11.7601   &   13.98   &   1.11  \\ 
NLS-ManPG  &   9018.80   & -112.51  &     5.6677   &   13.88   &   3.62  \\ 
ManPQN     &   5854.17    & -112.53  &     4.4855   &   13.5   &   12.66  \\ 
\midrule
$\mu=2.0$ &   \\
\midrule
ManPG      &   30001.00  & -76.63  &     18.0976   &   13.99   &   1.10  \\ 
ManPG-Ada &   11534.86  & -76.64  &     6.8606   &   13.98   &   1.35  \\ 
NLS-ManPG  &   4306.00   & -76.65  &      2.6373   &   13.91   &   4.01  \\ 
ManPQN     &   1292.71    & -76.71  &      1.7506   &   12.65   &   16.84  \\ 
\midrule
\end{tabular}
\end{table}

\section{Conclusion and Future Work}\label{sec6}

In this paper, we present a proximal quasi-Newton algorithm, named ManPQN,
for the composite optimization problem \eqref{eq1_prob} over the Stiefel manifold.
The ManPQN method finds the descent direction $V_k$ by solving a subproblem,
which is formed by replacing the term $\|V\|^2/2t$ in the subproblem of ManPG by $\frac{1}{2}\|V\|_{\mathcal{B}_k}^2$,
where $\mathcal{B}_k$ is a symmetric linear operator on ${\rm T}_{X_k}\mathcal{M}$.
We also use several techniques to accelerate the speed of the ManPQN algorithm.
The most important technique is that we use the linear operator $\mathbf{B}_k$,
defined by \eqref{hanwu},
to approximate the linear operator $\mathcal{B}_k$.
To guarantee the positive definiteness of $\mathbf{B}_k$, a damped LBFGS method is used to update $\mathbf{B}_k$.
Moreover, we use a nonmonotone line search technique to improve the performance of the ManPQN method.
Numerical results demonstrate that the ManPQN method is an effective method.
Under some mild conditions, we establish the global convergence of ManPQN.
If the Hessian operator of the objective function is positive definite at the local minimum,
the local linear convergence of ManPQN is also proved.

The main cost of the ManPQN lies in the step of solving the subproblem \eqref{eq3_subprob}.
Stimulated by the work in \cite{mashiqian2020}, we use the adaptive semismooth Newton (ASSN) method to get the solution of \eqref{eq3_subprob}.
But the total cost of the ASSN method is excessive for large $n$ and $r$.
How to reduce the computational cost of the ASSN method will be one of the topics of our future work.
We will also investigate other techniques to accelerate the ManPG method.
This is another topic of our future work.

\bmhead{Acknowledgments}

The work of Wei Hong Yang was supported by the National Natural Science Foundation of China grant 11971118.
The authors are grateful to the associate editor and the two anonymous referees
for their valuable comments and suggestions.
\bibliographystyle{plain}
\nocite{proxmanifold2005}

\bibliography{ManPQN}


%

\end{document}